\documentclass[12pt,english,american]{amsart}
\usepackage[T1]{fontenc}
\usepackage[latin9]{inputenc}
\usepackage[letterpaper]{geometry}
\geometry{verbose,tmargin=0.7in,bmargin=0.5in,lmargin=0.5in,rmargin=0.5in}
\usepackage{babel}
\usepackage{float}
\usepackage{amsthm}
\usepackage{amstext}
\usepackage{esint}
\usepackage[unicode=true,pdfusetitle,
 bookmarks=true,bookmarksnumbered=false,bookmarksopen=false,
 breaklinks=false,pdfborder={0 0 1},backref=false,colorlinks=false]
 {hyperref}

\makeatletter

\providecommand{\tabularnewline}{\\}
\floatstyle{ruled}
\newfloat{algorithm}{tbp}{loa}
\providecommand{\algorithmname}{Algorithm}
\floatname{algorithm}{\protect\algorithmname}

\numberwithin{equation}{section}
\numberwithin{figure}{section}
\numberwithin{table}{section}
\theoremstyle{plain}
\newtheorem{thm}{\protect\theoremname}[section]
  \theoremstyle{plain}
  \newtheorem{prop}[thm]{\protect\propositionname}
  \theoremstyle{definition}
  \newtheorem{example}[thm]{\protect\examplename}
  \theoremstyle{plain}
  \newtheorem{cor}[thm]{\protect\corollaryname}
  \theoremstyle{definition}
  \newtheorem{defn}[thm]{\protect\definitionname}
  \theoremstyle{plain}
  \newtheorem{lem}[thm]{\protect\lemmaname}

\usepackage[arrow,matrix,curve,cmtip,ps]{xy}
\usepackage{tikz}
\usepackage{pgflibraryarrows}
\usetikzlibrary{calc}
\definecolor{ffqqww}{rgb}{1,0,0.4}
\usepackage{stmaryrd}

\@ifundefined{showcaptionsetup}{}{%
 \PassOptionsToPackage{caption=false}{subfig}}
\usepackage{subfig}
\makeatother

  \addto\captionsamerican{\renewcommand{\corollaryname}{Corollary}}
  \addto\captionsamerican{\renewcommand{\definitionname}{Definition}}
  \addto\captionsamerican{\renewcommand{\examplename}{Example}}
  \addto\captionsamerican{\renewcommand{\lemmaname}{Lemma}}
  \addto\captionsamerican{\renewcommand{\propositionname}{Proposition}}
  \addto\captionsamerican{\renewcommand{\theoremname}{Theorem}}
  \addto\captionsenglish{\renewcommand{\corollaryname}{Corollary}}
  \addto\captionsenglish{\renewcommand{\definitionname}{Definition}}
  \addto\captionsenglish{\renewcommand{\examplename}{Example}}
  \addto\captionsenglish{\renewcommand{\lemmaname}{Lemma}}
  \addto\captionsenglish{\renewcommand{\propositionname}{Proposition}}
  \addto\captionsenglish{\renewcommand{\theoremname}{Theorem}}
  \providecommand{\corollaryname}{Corollary}
  \providecommand{\definitionname}{Definition}
  \providecommand{\examplename}{Example}
  \providecommand{\lemmaname}{Lemma}
  \providecommand{\propositionname}{Proposition}
\addto\captionsenglish{\renewcommand{\algorithmname}{Algorithm}}
\providecommand{\theoremname}{Theorem}

\begin{document}

\title{Biased Weak Polyform Achievement Games}

\author{Ian Norris}

\author{N\'andor Sieben}

\address{Northern Arizona University, Department of Mathematics and Statistics,
Flagstaff AZ 86011-5717, USA}

\email{ian52n@gmail.com}

\email{nandor.sieben@nau.edu}

\subjclass[2000]{91A46 (05B50) }

\keywords{biased achievement games, priority strategy}

\date{\the\month/\the\day/\the\year}
\begin{abstract}
In a biased weak $(a,b)$ polyform achievement game, the maker and
the breaker alternately mark $a,b$ previously unmarked cells on an
infinite board, respectively. The maker's goal is to mark a set of
cells congruent to a polyform. The breaker tries to prevent the maker
from achieving this goal. A winning maker strategy for the $(a,b)$
game can be built from winning strategies for games involving fewer
marks for the maker and the breaker. A new type of breaker strategy
called the priority strategy is introduced. The winners are determined
for all $(a,b)$ pairs for polyiamonds and polyominoes up to size
four.
\end{abstract}
\maketitle
\newcommand\trrad{.3} 

\newcommand\trGoU[2]{
  (90:#2*\trrad) ++(30:#2*\trrad) ++(30:#1*\trrad) ++(-30:#1*\trrad)
}

\newcommand\trGoD[2]{
  (90:#2*\trrad) ++(30:#2*\trrad) ++(30:#1*\trrad) ++(-30:#1*\trrad) ++(90:\trrad) ++(-30:\trrad)
}

\newcommand\trUB[2]{  
  \draw \trGoU{#1}{#2}
     +(90:\trrad) -- +(210:\trrad) -- +(330:\trrad) -- cycle 
     +(90:.7*\trrad) -- +(210:.7*\trrad) -- +(330:.7*\trrad) -- cycle;
}

\newcommand\trUBC[3]{
\trUB{#1}{#2}
\path \trGoU{#1}{#2} coordinate (P0);
\node at (P0) {$\scriptstyle #3 $};
}

\newcommand\trUM[2]{  
  \draw \trGoU{#1}{#2} coordinate (P0)
     +(90:\trrad) -- +(210:\trrad) -- +(330:\trrad) -- cycle; 
  \draw[fill,color=black!80] (P0) +(90:.7*\trrad) -- +(210:.7*\trrad) -- +(330:.7*\trrad) -- cycle;
}

\newcommand\trUMC[3]{
\trUM{#1}{#2}
\path \trGoU{#1}{#2} coordinate (P0);
\node[color=white] at (P0) {$\scriptstyle #3 $};
}

\newcommand\trDB[2]{  
  \draw \trGoD{#1}{#2}
     +(-90:\trrad) -- +(-210:\trrad) -- +(-330:\trrad) -- cycle 
     +(-90:.7*\trrad) -- +(-210:.7*\trrad) -- +(-330:.7*\trrad) -- cycle;
}

\newcommand\trDBC[3]{
\trDB{#1}{#2}
\path \trGoD{#1}{#2} coordinate (P0);
\node at (P0) {$\scriptstyle #3 $};
}

\newcommand\trDM[2]{  
  \draw \trGoD{#1}{#2} coordinate (P0)
     +(-90:\trrad) -- +(-210:\trrad) -- +(-330:\trrad) -- cycle; 
  \draw[fill,color=black!80] (P0) +(-90:.7*\trrad) -- +(-210:.7*\trrad) -- +(-330:.7*\trrad) -- cycle;
}

\newcommand\trU[6]{  
  \draw \trGoU{#1}{#2} coordinate (P0) 
    +(90:\trrad) -- +(210:\trrad) -- +(330:\trrad) -- cycle;
  \node at (P0) {$\scriptstyle #3$};
  \node at ($ (P0)+(90:0.5*\trrad) $) {$\scriptstyle #4$};
  \node at ($ (P0)+(210:0.5*\trrad) $) {$\scriptstyle #5$};
  \node at ($ (P0)+(-30:0.5*\trrad) $) {$\scriptstyle #6$};
}

\newcommand\trD[6]{  
  \draw \trGoD{#1}{#2} coordinate (P0)
    +(-90:\trrad) -- +(30:\trrad) -- +(150:\trrad) -- cycle;
  \node at (P0) {$\scriptstyle #3$};
  \node at ($ (P0)+(-90:0.5*\trrad) $) {$\scriptstyle #4$};
  \node at ($ (P0)+(150:0.5*\trrad) $) {$\scriptstyle #5$};
  \node at ($ (P0)+(30:0.5*\trrad) $) {$\scriptstyle #6$};
}

\newcommand\trDMC[3]{
\trDM{#1}{#2}
\path \trGoD{#1}{#2} coordinate (P0);
\node[color=white] at (P0) {$\scriptstyle #3 $};
}

\newcommand\trTileUD[4]{
\trU{#1}{#2}{}{}{}{}
\trD{#3}{#4}{}{}{}{}
\path \trGoU{#1}{#2} coordinate (A0);
\path \trGoD{#3}{#4} coordinate (A1);
\draw[line width=1.6pt,color=ffqqww] (A0) -- (A1);
}


\def\a{\text{$\shortrightarrow$}}

\section{Introduction}

A \emph{plane polyform} is a figure constructed by joining finitely
many congruent basic polygons along their edges. If the basic polygons
are \emph{cells} of a regular tiling of the plane by squares, equilateral
triangles or regular hexagons, then the polyform is called a \emph{polyomino},
\emph{polyiamond} or \emph{polyhex} respectively. If the cells come
from a regular tiling of the space by cubes, then the polyform is
called a polycube. An \emph{animal} is a polyomino, polyiamond, polyhex
or polycube. We only consider animals up to congruence, that is, rotations
and reflections of an animal are considered to be the same. The number
of cells $s(A)$ of an animal $A$ is called the \emph{size} of $A$.
The standard reference for polyominoes is \cite{golomb:polyominoes}. 

In a \emph{weak animal $(a,b)$ achievement game} two players alternately
mark $a$ and $b$ previously unmarked cells respectively using their
own colors. The first player (the \emph{maker}) tries to mark a copy
of a given goal animal. The second player (the \emph{breaker}) tries
to prevent the maker from achieving his goal. An animal is an $(a,b)$-\emph{winner}
if the maker can win the $(a,b)$ achievement game. Otherwise the
animal is called a \emph{loser}. Achievement games are studied, for
example, in \cite{beck:book,ww3,bode.harborth:hexagonal,bode.harborth:triangle,gardner.martin:mathematical,harary:achievement*1}.

In Section~\ref{sec:Preliminaries} we describe the pairing strategies
and proof sequences which are the standard descriptions of breaker
and maker strategies. We also prove some basic results.

Biased games \cite[Sections 30--33]{beck:book} are more complex than
the regular $(1,1)$ game. It many cases, it is possible to decompose
a biased game into simpler biased games involving fewer marks in each
turn. In Section~\ref{sec:TheACBgame}, we describe how an $(a,b)$
maker strategy can be built from maker strategies for simpler games.

The most important strategy for the breaker for an unbiased game is
the pairing strategy. In fact, a long-standing difficulty is that
the pairing strategy is almost the only tool we have for the breaker.
The pairing strategy generalizes for $(1,b)$ games but the generalization
does not seem straightforward for $(a,b)$ games with $a\ge2$. We
remedy this problem in Section~\ref{sec:The-priority-strategy} with
the introduction of the \emph{priority strategy}. 

Given an animal, our goal is to determine all the $(a,b)$ pairs for
which the animal is a winner. This information is collected in the
\emph{threshold sequence} for the animal described in Section~\ref{sec:The-threshold-sequence}.
In Sections~\ref{sec:Polyiamonds} and \ref{sec:Polyominoes}, we
find the threshold sequence for each polyiamond and polyomino of size
smaller than 5. One of the polyominoes requires a more sophisticated
version of the priority strategy called \emph{history dependent priority
strategy}. In Section~\ref{sec:historyDep}, we describe this strategy
and we present an algorithm for verifying that a history dependent
priority strategy works. The paper ends with an unsolved problems
section.

The authors thank Ian Douglas and Steve Wilson for helpful discussions
about the material.

\section{\label{sec:Preliminaries}Preliminaries}

A strategy for the maker can be captured by a \emph{proof sequence}
$(s_{0},\ldots,s_{n})$ of situations \cite{bode.harborth:hexagonal,harborth.seemann:handicap,prooftree,sieben.deabay:polyomino}.
A situation $s_{i}=(C_{s_{i}},N_{s_{i}})$ is an ordered pair of disjoint
sets of cells. We think of the core $C_{s_{i}}$ as a set of cells
marked by the maker and the neighborhood $N_{s_{i}}$ as a set of
cells not marked by the breaker. A situation is the not necessarily
connected part of the playing board that is important for the maker.
A situation does not contain any of the breaker's marks. Those marks
are not important as long as the situation contains enough empty cells
in the neighborhood. Just like polyominoes, congruent situations are
considered to be the same. In the situations of a proof sequence,
it is always the breaker who is about to mark a cell. The game progresses
from $s_{n}$ towards $s_{0}$. We require that $C_{s_{0}}$ is the
goal polyomino and $N_{s_{0}}=\emptyset$. This means that the maker
already won by marking the cells in $C_{s_{0}}$ and there is no need
for any cells on the board in $N_{s_{0}}$. For each $i\in\{1,\ldots,n\}$
we also require that if the breaker marks $b$ or fewer cells in $N_{s_{i}}$,
then the maker can mark $a$ cells of $N_{s_{i}}$ not marked by the
breaker and reach a position $s_{j}$ closer to his goal, that is,
satisfying $j<i$. More precisely, for all $\{x_{1},\ldots,x_{b}\}\subseteq N_{s_{i}}$
there must be an $\{\tilde{x}_{1},\ldots,\tilde{x}_{a}\}\in N_{s_{i}}\setminus\{x_{1},\ldots,x_{b}\}$
and a $j\in\{0,\ldots,i-1\}$ such that
\[
C_{s_{j}}\subseteq C_{s_{i}}\cup\{\tilde{x}_{1},\ldots,\tilde{x}_{a}\}\text{ and }N_{s_{j}}\subseteq C_{s_{i}}\cup N_{s_{i}}\setminus\{x_{1},\ldots,x_{b}\}.
\]
Figures~\ref{fig:proofSeq11T32} and \ref{fig:SchProofSeqB} show
examples of proof sequences. We present proof sequences graphically.
In the figures, filled cells represent the marks of the maker. Cells
with letters in them are the neighborhood cells that must be unmarked.
Each letter represents a possible continuation for the maker. After
the marks of the breaker, the maker picks a letter unaffected by the
breaker marks. The maker marks the cells with the capital version
of this letter. The cells with the lower case version of the chosen
letter become the neighborhood cells of the new situation. Each situation
is constructed to make sure that the breaker cannot mark $b$ cells
which together contain a lower case or capital copy of all the available
letters. We include a flow chart for each proof sequence. The letter
on the arrows of the flow chart is used to determine which situation
the maker can reach by picking that letter.

Most of the known strategies for the breaker are based on pairings
of the cells of the board. A $b$-\emph{paving} of the board is a
symmetric and irreflexive relation on the set of cells where each
cell is related to at most $b$ other cells. Figure~\ref{fig:TPavings}
shows two examples of $1$-pavings and one example of a $2$-paving.
A $b$-paving determines the following \emph{paving strategy} for
the breaker in the $(1,b)$ game. In each turn, the breaker marks
the unmarked cells related to the cell last marked by the maker. If
there are fewer than $b$ such cells, then she uses her remaining
marks randomly. If the breaker follows the paving strategy, then the
maker cannot mark two related cells during a game. This allows the
breaker to win if every placement of the goal animal on the board
contains a pair of related cells. 

Two cells of a regular tiling are \emph{adjacent} if they share a
common edge. The \emph{exterior boundary $E(A)$} of the animal $A$
is the set of cells outside of $A$ but adjacent to a cell of $A$.
The \emph{site-perimeter} of $A$ is the number of cells $p(A):=|E(A)|$
in the exterior boundary (see \cite{sieben:sitePerimeter,sieben:polyominoes}).
Let $\delta$ be the size of the site-perimeter of a single cell.
Note that $\delta$ is 3, 4 and 6 on the triangular, rectangular and
hexagonal board respectively.
\begin{prop}
\label{pro:surround}Let $A$ be an animal. If $a<|A|$ and $a\delta\le b$
then $A$ is an $(a,b)$-loser.\end{prop}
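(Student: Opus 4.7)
The plan is to give the breaker an explicit \emph{surround} strategy that, after each of her turns, forces every connected component of the maker's marked cells to have size at most $a$. Since the goal animal $A$ is connected with $|A|>a$, such a configuration can never contain a congruent copy of $A$, so the maker cannot win.

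The strategy is as follows. On each breaker turn, let $M$ denote the set of cells the maker placed on his immediately preceding turn, so $|M|\le a$. The breaker marks every still-unmarked cell of the exterior boundary $E(M)$, spending any leftover marks on arbitrary empty cells. Feasibility is exactly where the hypothesis $a\delta\le b$ enters: since every single cell has site-perimeter $\delta$,
\[
|E(M)|\le\sum_{c\in M}|E(\{c\})|=|M|\cdot\delta\le a\delta\le b,
\]
so the breaker indeed has enough marks to surround $M$.

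I would then prove by induction on the turn number the invariant that after each breaker turn, every cell adjacent to a maker cell is marked. The base case holds by construction; for the inductive step, the breaker's strategy surrounds the newly placed batch $M$, and the earlier batches remain surrounded from previous turns. The key consequence is that when the maker next places $a$ cells, none of them can be adjacent to any older maker cell, since every such neighbor is already claimed. Hence the maker's connected components never merge across turns, so each one lies entirely within the output of a single turn and therefore has size at most $a$.

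Since $A$ is connected with $|A|>a$, no configuration in which all maker components have size at most $a$ can contain a congruent copy of $A$, which shows $A$ is an $(a,b)$-loser. The only mildly delicate point is the subadditivity bound $|E(M)|\le|M|\cdot\delta$; the rest of the argument is a routine induction followed by the connectivity observation.
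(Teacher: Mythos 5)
Your proof is correct and uses essentially the same strategy as the paper: the breaker surrounds the maker's most recent batch, which is feasible since $|E(M)|\le a\delta\le b$, and this confines each connected maker component to a single turn's worth of marks. You spell out the invariant and the induction more carefully than the paper's terser version, but the underlying idea is identical.
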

\begin{proof}
Since $a<|A|$, the maker cannot build the goal animal using only
the marks of a single turn. The size of the site-perimeter of the
marks of the maker in a single turn is at most $a\delta$ so this
whole site perimeter can be marked by the breaker. If the breaker
marks every cell of the site-perimeter of the maker's mark in each
turn, then the maker cannot build the goal animal using some marks
from previous turns either.
\end{proof}
\begin{figure}
\renewcommand\trrad{.35}

\hfill{}\subfloat[\label{fig:SchProofSeqA}]{%
\begin{tabular}{ccccc}
 & $s_{0}$ &  & $s_{1}$ & \tabularnewline
 & \begin{tikzpicture}[baseline=-0.5ex] 
\draw (.4,0) ellipse (20pt and 10pt);
\node at (0.8,0) {$\scriptstyle A$};
\end{tikzpicture} &  & $a\times$\begin{tikzpicture}[baseline=-0.5ex] 
\draw (.4,0) ellipse (20pt and 10pt);
\draw[rotate=120] (.4,0) ellipse (20pt and 10pt);
\draw[rotate=-120] (.4,0) ellipse (20pt and 10pt);
\node at (0.8,0) {$\scriptstyle A_1$};
\node at (120:0.8) {$\scriptstyle A_2$};
\node at (-120:0.8) {$\scriptstyle A_k$};
\node at (0,0) {$\scriptstyle x$};
\end{tikzpicture} & \tabularnewline
\end{tabular}

}\hfill{}\subfloat[\label{fig:SchProofSeqB}]{%
\begin{tabular}{ccccc}
 & $s_{0}$ &  & $s_{1}$ & \tabularnewline
 & \begin{tikzpicture}[baseline=-0.5ex] 
\trUM{0}{0}
\trDM{0}{0}
\trUM{1}{0}
\end{tikzpicture} &  & $2\times$\begin{tikzpicture}[baseline=-0.5ex] 
\trUM{0}{0}
\trD{0}{0}{A}{}{}{}
\trU{1}{0}{A}{}{}{}
\trD{-1}{0}{B}{}{}{}
\trU{-1}{1}{B}{}{}{}
\trD{0}{-1}{C}{}{}{}
\trU{0}{-1}{C}{}{}{}
\end{tikzpicture} & \tabularnewline
\end{tabular}\lower-0.4cm\hbox{
$\xymatrix@=5mm{
s_1 \ar[d]_{a,b,c}  \\
s_0 \\
}$
}

}\hfill{}

\caption{(a) Schematic $(a,b)$ proof sequence for the animal $A$ assuming
$|A|=a-1$ and $ak\ge b$. (b) A $(2,5)$-proof sequence for the polyiamond
$T_{3,1}$.}
\end{figure}
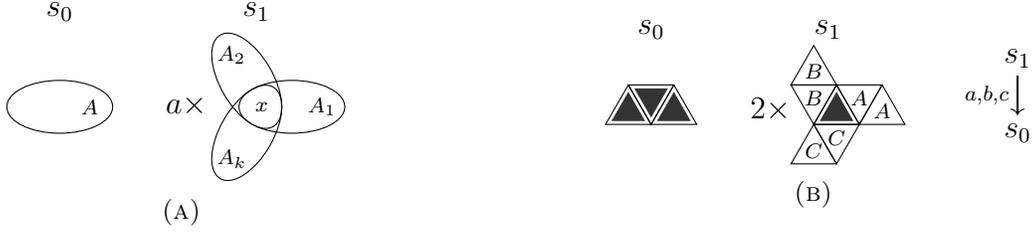

\begin{prop}
\label{pro:twostep}Let $A$ be an animal with $|A|=a+1$. Assume
$A$ has $k$ placements $A_{1},\ldots,A_{k}$ on the board such that
$A_{i}\cap A_{j}=\{x\}$ for all $i,j\in\{1,\ldots,k\}$ where $x$
is a cell. If $ak>b$ then $A$ is an $(a,b)$-winner.\end{prop}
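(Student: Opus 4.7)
The plan is to exhibit a two-situation proof sequence $(s_0,s_1)$ realizing the schematic of Figure~\ref{fig:SchProofSeqA}. I take $s_0:=(A,\emptyset)$. Using the infinity of the board, I choose $a$ cells $y_1,\ldots,y_a$ at pairwise distance larger than twice the diameter of $A$, and for each $\ell$ I let $B_{i,\ell}$ denote the translate of $A_i$ that sends $x$ to $y_\ell$. I then set
\[
C_{s_1}:=\{y_1,\ldots,y_a\},\qquad N_{s_1}:=\bigcup_{\ell=1}^{a}\bigcup_{i=1}^{k}\bigl(B_{i,\ell}\setminus\{y_\ell\}\bigr).
\]
Informally, on turn~1 the maker marks the $a$ centers, and the neighborhood records every cell he might need on turn~2 to complete one of the $ak$ resulting copies of $A$.

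The key structural fact is that the $ak$ threat groups $B_{i,\ell}\setminus\{y_\ell\}$, each of size $a$, are pairwise disjoint. For a fixed $\ell$ this is exactly the hypothesis $A_i\cap A_j=\{x\}$ translated to the center $y_\ell$; for $\ell\neq\ell'$ the sets $B_{i,\ell}$ and $B_{j,\ell'}$ are disjoint by the far-apart choice of centers. Consequently, a single breaker mark lies in at most one threat group, so the breaker's $b$ marks can touch at most $b$ of the $ak$ groups. Since $ak>b$, some group $B_{i,\ell}\setminus\{y_\ell\}$ is untouched; the maker marks those $a$ cells, and together with $y_\ell\in C_{s_1}$ they form the copy $B_{i,\ell}$ of $A$, giving the transition $s_1\to s_0$.

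The only subtle point is the far-apart construction, which is immediate on the infinite board: if $d$ bounds the diameter of $A$, choosing the $y_\ell$ at pairwise distance exceeding $2d$ prevents any cell from lying in both $B_{i,\ell}$ and $B_{j,\ell'}$ for $\ell\neq\ell'$. With this disjointness in hand, the formal conditions on the transition $s_1\to s_0$ reduce to the counting argument above, and no further machinery is needed.
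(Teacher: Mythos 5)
Your proposal is correct and follows essentially the same route as the paper: mark $a$ cells far apart in the first turn, obtaining $ak$ pairwise-disjoint "threat groups" (each being $A_i$ translated to one of the $a$ centers, minus the center), and since $ak>b$ the breaker cannot hit every group, so the maker finishes in an untouched one on turn two. The only difference is cosmetic — you write out the situations $s_0,s_1$ and the disjointness argument explicitly, where the paper states the same idea informally with reference to the schematic figure.
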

\begin{proof}
Figure~\ref{fig:SchProofSeqA} shows a schematic proof sequence.
The maker can create situation $s_{1}$ in the first turn by marking
$a$ cells far from each other so that $a$ copies of the placements
of $A_{1},\ldots,A_{k}$ are created. The common cell $x$ of the
$k$ placements is marked by the maker in each of these copies. The
breaker cannot mark a cell in each of the $ak$ copies of the goal
animal, so the maker can win in the second turn. 
\end{proof}
The previous result is often used with $k=\delta$ as the following
example shows.
\begin{example}
\label{exa:T31}Figure~\ref{fig:SchProofSeqB} shows that the polyiamond
$T_{3,1}$ is a $(2,5)$-winner using the strategy of Proposition~\ref{pro:twostep}
with $k=\delta=3$.
\end{example}

\section{\label{sec:TheACBgame}The $(a\a c,b)$ game}

Now we introduce a variation of the regular $(a,b)$ game that we
call the $(a\a c,b)$ game. In this variation the maker marks $a$
previously unmarked cells in each turn until the very last turn. In
the last turn he is allowed to mark $c$ cells. The breaker marks
$b$ previously unmarked cells in each turn.

An animal is called a \emph{bounded winner} if the maker has a winning
strategy consisting of at most a fixed number of moves on some finite
subboard of the playing board. All the known winning polyforms are
also finite winners but see \cite[Section 5.4]{beck:book} for examples
of hypergraph games that are finite but unbounded in time or in space.

The reason for the study of the seemingly unnatural $(a\a c,b)$ games
is the following result. The proof uses some ideas found in \cite[Section 14, Section 30]{beck:book}.
We use the notation ${\bf W}={\bf N}\cup\{0\}$ for the set of whole
numbers.
\begin{thm}
\label{thm:main}Let $a=\sum_{i=1}^{s}a_{i}$ and $b=\sum_{i=1}^{s}b_{i}$
with $a_{i},b_{i}\in{\bf W}$. If a goal animal is a bounded $(a_{i}\a a,b_{i})$-winner
for all $i$, then it is also an $(a,b+s-1)$-winner.\end{thm}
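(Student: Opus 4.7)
The plan is to run the $s$ given bounded sub-strategies simultaneously on pairwise disjoint sub-boards $R_1,\dots,R_s$ of the infinite playing board. In each turn the maker spends her $a=\sum_i a_i$ marks by placing $a_i$ cells in $R_i$ according to the bounded $(a_i\a a,b_i)$-winning strategy assigned to $R_i$. When one sub-strategy reaches its final situation---the one whose sub-proof-sequence step calls for $a$ marks to complete the goal animal---the maker diverts all $a$ of her marks into that sub-board and plays the winning move there, which is precisely the ``$\a a$'' flexibility built into the hypothesis.

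The combinatorial heart of the argument is a pigeonhole on the breaker's $b+s-1$ marks per turn. If $m_i$ denotes the number of breaker marks that fall inside $R_i$, then $\sum_i m_i\le b+s-1$; the inequalities $m_i\ge b_i+1$ cannot hold simultaneously for every $i$, because summing would give $\sum_i m_i\ge\sum_i(b_i+1)=b+s$, exceeding the breaker's budget. Hence in every turn there is at least one ``safe'' index $i^\ast$ with $m_{i^\ast}\le b_{i^\ast}$, and the sub-strategy in $R_{i^\ast}$ is authorized by its own proof sequence to advance one step.

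To turn this into a complete argument I would package the construction as a compound proof sequence in the sense of Section~\ref{sec:Preliminaries}: compound situations are tuples $(\sigma_1,\dots,\sigma_s)$ with compound core $\bigcup_i C_{\sigma_i}$ and compound neighborhood $\bigcup_i N_{\sigma_i}$, and a compound situation is winning when some component is a winning sub-situation. A transition uses the pigeonhole to pick a safe index $i^\ast$, advances $\sigma_{i^\ast}$ via the sub-strategy with $a_{i^\ast}$ marks, and spends the remaining $a-a_{i^\ast}$ marks on the other sub-boards, either continuing the sub-situations whose $R_j$ were also safe, or restarting any damaged $\sigma_j$ from its initial situation in a fresh unused copy elsewhere on the infinite board.

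The main obstacle I anticipate is converting the per-turn pigeonhole into a game-long termination guarantee, because the safe index $i^\ast$ may change from turn to turn and the restarts can temporarily move some sub-situations back toward their starting stages. The fix should be an appropriate monotone potential function on compound situations---for instance a lexicographic or suitably weighted combination of the sub-proof-sequence indices---that strictly decreases after each maker--breaker exchange. The boundedness hypothesis bounds each sub-proof-sequence's length, the infinite playing board supplies as many fresh disjoint sub-boards as needed for restarts, and together these should make such a potential definable, forcing the compound proof sequence to reach a winning compound situation in finitely many turns.
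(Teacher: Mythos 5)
Your pigeonhole observation — that in any single turn the breaker's $b+s-1$ marks cannot exceed $b_i$ on \emph{every} sub-board, because that would require $\sum_i(b_i+1)=b+s$ marks — is exactly the right starting point, and it is the same counting inequality that drives the paper's argument. You also correctly identify where the ``$\a a$'' flexibility is spent (diverting all $a$ marks into one sub-board to finish it) and, importantly, you correctly flag the real difficulty yourself: the safe index changes from turn to turn, so restarts can undo progress.

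The gap is in the proposed fix. With a \emph{single} live copy of each sub-game, no monotone potential exists, because the breaker can force a cycle. Take $s=2$, $b_1=b_2=0$, $b+s-1=1$, and sub-games needing $l_1=l_2=3$ steps. Each turn the breaker has one mark; she always plays it into the sub-board with the larger progress (ties broken toward $R_1$). Turn 1: $R_1$ is flooded and must restart at progress $0$, $R_2$ advances to $1$. Turn 2: $R_2$ (now ahead) is flooded and restarts, $R_1$ advances to $1$. Turn 3: $R_1$ is flooded and restarts, $R_2$ advances to $1$. The compound state cycles forever between $(0,1)$ and $(1,0)$ and the ``$\a a$'' endgame never triggers, so any function of the progress vector that is constant on congruent states cannot strictly decrease. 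The per-turn pigeonhole guarantees \emph{some} sub-board advances, but it does not prevent a different sub-board from being destroyed in the same turn, so net progress can be zero indefinitely.

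What the paper does differently, and what your write-up is missing, is to maintain \emph{many} simultaneous copies of each sub-game type and to measure progress over \emph{stages} of many turns rather than over single turns. In stage $j$ the maker spawns $n_j$ fresh sub-boards of each type; the breaker's $n_j(b+s-1)$ marks spread over $n_j s$ sub-boards, and the averaging inequality $\sum_i k_{j,i}(b_i+1)\ge n_j$ shows that for a sufficiently large $n_j$ at least one type retains as many surviving (``alive'') sub-boards as the recursion demands. Progress is then recorded in a vector $p$, and because each stage strictly increments one coordinate and the breaker cannot ruin all copies of the type being advanced, a coordinate eventually hits $l_i$. That ``surplus of parallel copies, consumed stage by stage'' is the essential mechanism that defeats the ping-pong attack, and no potential function on a single-copy compound situation can substitute for it.
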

\begin{proof}
Let us call the $(a_{i}\a a,b_{i})$ game the $i$-th game. The goal
animal is a bounded winner in the $i$-th game for each $i$, that
is, the maker can mark a copy of the goal animal after $l_{i}$ turns
on some sufficiently large but finite subset $B_{i}$ of the original
infinite board. Since $s$ and the $B_{i}$ are finite, we can find
a finite subset $B$ of the original infinite board that contains
a copy of every $B_{i}$. The maker can win the $i$-th game for all
$i$ on any placement of $B$ on the infinite board. Any breaker mark
outside a placement of $B$ has no effect on the outcome of the $i$-th
game played on that placement. The main idea of the maker's $(a,b+s-1)$
strategy is to mark $s$ groups of cells containing $(a_{1},\ldots,a_{s})$
cells respectively far away from each other in subboards and to play
distributed $i$-th games on the subboards that have at most $b_{i}$
breaker marks in each turn. The subboards are disjoint placements
of $B$.

The strategy for the $(a,b+s-1)$ game has stages consisting of several
turns. In each stage, the maker tries to make a one-turn progress
in one of the $i$-th games. We keep track of the progress using a
progress vector $p_{j}$ in ${\bf W}^{s}$. At the beginning of the
game, the progress vector is $p_{0}=(0,\ldots,0)$ indicating that
no progress is made in any of the subgames. 

The first stage contains $n_{1}$ turns in which the maker marks $n_{1}a$
cells. In each turn, $s$ new disjoint subboards congruent to $B$
are created. The new subboards receive $a_{1},\ldots,a_{s}$ marks
respectively, according to the first moves in the corresponding $(a_{i}\a a,b_{i})$
strategies. Let us call the subboards receiving maker moves according
to the $i$-th strategy type $i$ subboards. The total number of subboards
at the end of the stage is $n_{1}s$ since there are $n_{1}$ subboards
for each type. At the end of the first stage, we also have $n_{1}(b+s-1)$
breaker marks on the board. Let $k_{1,i}$ be the number of type $i$
alive subboards containing at most $b_{i}$ breaker marks. Then we
have
\[
n_{1}(b+s-1)\ge\sum_{i=1}^{s}(n_{1}-k_{1,i})(b_{i}+1)
\]
which implies $\sum_{i=1}^{s}k_{1,i}(b_{i}+1)\ge n_{1}$. So a large
enough $n_{1}$ guarantees that $k_{1,i_{1}}$ is large enough for
some $i_{1}$, that is, the maker can create as many alive type $i_{1}$
subboards as he needs. Now the maker disregards all the subboards
except the $k_{1,i_{1}}$ alive subboards of type $i_{1}$ that have
at most $b_{i_{1}}$ breaker marks. The progress vector becomes $p_{1}=(0,\ldots,0,1,0,\ldots,0)$
where the $1$ is at the $i_{1}$-th coordinate indicating that the
maker made progress in the $i_{1}$-th game.

Stage $j$ contains $n_{j}$ turns in which the maker marks $n_{j}a$
cells. The cells are played on $s$ different subboards. For each
$i\in\{1,\ldots,s\}$ the maker marks $a_{i}$ cells on a type $i$
alive subboard according to the strategy for the $p_{j-1}(i)+1$-st
move in the $i$-th game. If $p_{j-1}(i)=0$ then the maker creates
a new empty subboard which is a disjoint placement of $B$. Repeating
the counting argument above gives
\begin{equation}
\sum_{i=1}^{s}k_{j,i}(b_{i}+1)\ge n_{j},\label{eq:enough}
\end{equation}
that is, a large enough $n_{j}$ guarantees that for at least one
type $i_{j}$ the number $k_{j,i_{j}}$ of alive subboards of type
$i_{j}$ that have at most $b_{i_{j}}$ breaker marks in the current
stage is as large as needed. Now the maker disregards all the type
$i_{j}$ subboards except the $k_{j,i_{j}}$ alive subboards. The
progress vector $p_{j+1}$ becomes $p_{j}$ with the $i_{j}$-th coordinate
incremented, indicating that the maker made progress in the $i_{j}$-th
game.

The game continues the same way. In each stage one coordinate of the
progress vector is incremented. Eventually, say after at most $l-1$
stages, the progress vector is going to have a coordinate $i$ such
that $p_{l-1}(i)=l_{i}-1$. Then in stage $l$ the maker can mark
$a$ cells in a type $i$ alive subboard and win. So the last stage
contains only $n_{l}=1$ turn.

For this to work, the number of alive subboards of each type needs
to be large enough, so that the maker can mark cells in an alive subboard
of the type determined by the strategy. The number of required alive
subboards of each type is potentially very large but is finite since
the length $l_{i}$ of the $i$-th game is finite for all $i$. So
the maker can create sufficiently many subboards by playing each stage
long enough, that is, picking a large enough $n_{j}$ for all $j$. 

\begin{figure}
\usetikzlibrary{trees}
\begin{tikzpicture}[level distance = 10mm, sibling distance = 36mm] 
\node(root) {$\left[\begin{smallmatrix}  p(1) & \cdots & p(s) \\ q(1) & \cdots & q(s) \end{smallmatrix}\right]$}
  child{ 
    child {node{$\left[\begin{smallmatrix}  p^{(1)}(1) & \cdots & p^{(1)}(s) \\ q^{(1)}(1) & \cdots & q^{(1)}(s) \end{smallmatrix}\right]$}}
    child {node{$\left[\begin{smallmatrix}  p^{(2)}(1) & \cdots & p^{(2)}(s) \\ q^{(2)}(1) & \cdots & q^{(2)}(s) \end{smallmatrix}\right]$}}
    child {node{$\cdots$}}
    child {node{$\left[\begin{smallmatrix}  p^{(s)}(1) & \cdots & p^{(s)}(s) \\ q^{(s)}(1) & \cdots & q^{(s)}(s) \end{smallmatrix}\right]$}}
    edge from parent node[left] {$\scriptscriptstyle n$}
    }
  ;
\end{tikzpicture}

\caption{\label{fig:StagesNode}A vertex and its $s$ descendants in the stage
diagram.}
\end{figure}
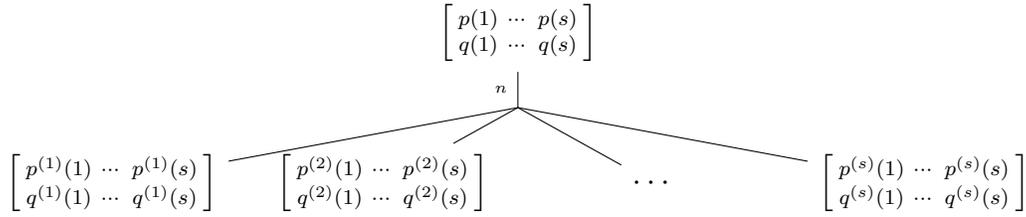

The number $n_{j}$ of required turns in each stage can be calculated
using a \emph{stage diagram.} Each vertex of the diagram is a $2\times s$
matrix that represents a possible stage during the game play. The
first row of the matrix is the progress vector $p$. The second row
is the \emph{supply vector} $q$. The $i$-th coordinate $q(i)$ of
the supply vector is the minimum number of required type $i$ alive
subboards containing at most $p(i)b_{i}$ breaker marks and $p(i)a_{i}$
maker marks according to the winning strategy for the $i$-th game. 

The game starts at the top of the diagram and progresses towards the
bottom along the edges. Each vertex has $s$ descendants. To get the
progress vector of the $i$-th descendant, we increment the $i$-th
coordinate of the progress vector of the parent vertex. The game ends
when we reach a stage with $p(i)=l_{i}$ for some $i\in\{1,\ldots,s\}$.
The labels on the edges show the number of required turns between
stages. 

A dashed edge indicates a single turn in which the maker reaches a
winning stage by putting all his $a$ marks into a single type $i$
subboard with progress $p(i)=l_{i}-1$. In the winning stages the
supply vector satisfies
\[
q(i)=\begin{cases}
1 & \text{if }p(i)=l_{i}\\
0 & \text{if }p(i)<l_{i}
\end{cases}
\]
since the maker only needs a single winning subboard of any type to
finish the game. The supply vectors of the other stages and the labels
on the edges can be calculated recursively from the bottom of the
diagram to the top. Let 
\[
\left[\begin{matrix}p^{(1)}\\
q^{(1)}
\end{matrix}\right],\ldots,\left[\begin{matrix}p^{(s)}\\
q^{(s)}
\end{matrix}\right]
\]
be the descendants of stage $v=\left[\begin{smallmatrix}p\\
q
\end{smallmatrix}\right]$ as shown in Figure~\ref{fig:StagesNode}. Then the number of required
turns after stage $v$ is 
\[
n:=1+\sum_{i=1}^{s}(q^{(i)}(i)-1)(b_{i}+1).
\]
Equation~\ref{eq:enough} guarantees that after $n$ turns the maker
reaches the $i$-th descendant with $q^{(i)}(i)$ type $i$ alive
subboards for some $i$. We write this number as a label on the edge
emanating from vertex $v$. During these $n$ turns the maker uses
up $n$ subboards of each type and the breaker marks $n(b+s-1)$ cells,
each of which mark can ruin a subboard. So to have enough supply of
alive subboards we let 
\[
q(i):=\max(\{n\}\cup\{q^{(j)}(i)+n+n(b+s-1)\mid j\in\{1,\ldots,s\}\setminus\{i\}\text{ and }q^{(j)}(i)>0\})
\]
for all $i$. We have infinitely many subboards with no progress so
we never run out of subboards. 
\end{proof}
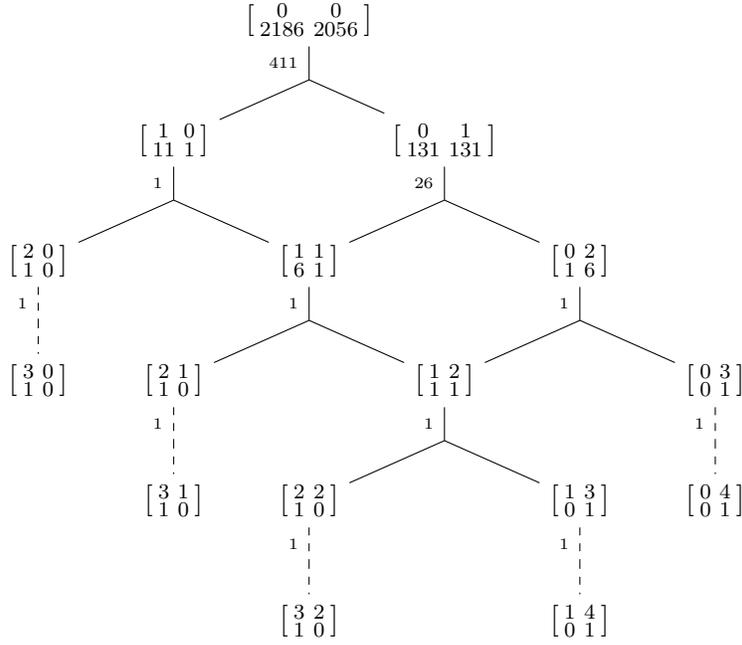
\begin{figure}
\usetikzlibrary{trees}
\begin{tikzpicture}[level distance = 8mm, sibling distance = 36mm] 
\node(root) {$\left[\begin{smallmatrix}  0 & 0 \\ 2186 & 2056 \end{smallmatrix}\right]$}
  child{ 
    child {node{$\left[\begin{smallmatrix}  1 & 0 \\ 11 & 1 \end{smallmatrix}\right]$}
      child {
        child {node{$\left[\begin{smallmatrix}  2 & 0 \\ 1 & 0 \end{smallmatrix}\right]$}
          child {
            child {node{$\left[\begin{smallmatrix}  3 & 0 \\ 1 & 0 \end{smallmatrix}\right]$}}
            edge from parent[dashed] node[left] {$\scriptscriptstyle 1$}             
          }
        }
        child {node{$\left[\begin{smallmatrix}  1 & 1 \\ 6 & 1 \end{smallmatrix}\right]$}
          child {
            child {node{$\left[\begin{smallmatrix}  2 & 1 \\ 1 & 0 \end{smallmatrix}\right]$}
              child {
                child {node{$\left[\begin{smallmatrix}  3 & 1 \\ 1 & 0 \end{smallmatrix}\right]$}}
                edge from parent[dashed] node[left] {$\scriptscriptstyle 1$}             
              }
            }
            child {node{$\left[\begin{smallmatrix}  1 & 2 \\ 1 & 1 \end{smallmatrix}\right]$}
              child {
                child{node{$\left[\begin{smallmatrix}  2 & 2 \\ 1 & 0 \end{smallmatrix}\right]$}
                  child {
                    child {node{$\left[\begin{smallmatrix}  3 & 2 \\ 1 & 0 \end{smallmatrix}\right]$}}
                    edge from parent[dashed] node[left] {$\scriptscriptstyle 1$}             
                  }
                }
                child{node{$\left[\begin{smallmatrix}  1 & 3 \\ 0 & 1 \end{smallmatrix}\right]$}
                  child {
                    child {node{$\left[\begin{smallmatrix}  1 & 4 \\ 0 & 1 \end{smallmatrix}\right]$}}
                  edge from parent[dashed] node[left] {$\scriptscriptstyle 1$}             
                  }
                }
                edge from parent node[left] {$\scriptscriptstyle 1$}
              }
            }
            edge from parent node[left] {$\scriptscriptstyle 1$}
          }
        }
        edge from parent node[left] {$\scriptscriptstyle 1$}
      }
    }
    child {node{$\left[\begin{smallmatrix}  0 & 1 \\ 131 & 131 \end{smallmatrix}\right]$}
      child {
        child[missing] {}
        child {node{$\left[\begin{smallmatrix}  0 & 2 \\ 1 & 6 \end{smallmatrix}\right]$}
          child {
            child[missing] {}
            child{node{$\left[\begin{smallmatrix}  0 & 3 \\ 0 & 1 \end{smallmatrix}\right]$}
              child {
                child {node{$\left[\begin{smallmatrix}  0 & 4 \\ 0 & 1 \end{smallmatrix}\right]$}}
                edge from parent[dashed] node[left] {$\scriptscriptstyle 1$}             
              }
            } 
            edge from parent node[left] {$\scriptscriptstyle 1$}
          }
        }
        edge from parent node[left] {$\scriptscriptstyle 26$}
      }
    }
    edge from parent node[left] {$\scriptscriptstyle 411$}
  }
  ;
  \draw (root-1-2-1)-- (root-1-1-1-2);
  \draw (root-1-2-1-2-1)-- (root-1-1-1-2-1-2);
\end{tikzpicture}

\caption{\label{fig:Stage-diagram}A stage diagram for $b_{1}=1$,\textbf{
$b_{2}=2$, $l_{1}=3$} and $l_{2}=4$.}
\end{figure}

\begin{example}
Figure~\ref{fig:Stage-diagram} shows a stage diagram for a game
with $s=2$, $b=b_{1}+b_{2}=1+2$, $l_{1}=3$ and $l_{2}=4$. We show
the details of the calculation at the stage with $p=(1,0)$ which
is the first vertex on the second row. We have 
\[
\left[\begin{matrix}p^{(1)}\\
q^{(1)}
\end{matrix}\right]=\left[\begin{matrix}2 & 0\\
1 & 0
\end{matrix}\right],\qquad\left[\begin{matrix}p^{(2)}\\
q^{(2)}
\end{matrix}\right]=\left[\begin{matrix}1 & 1\\
6 & 1
\end{matrix}\right],
\]
\[
n=1+(q^{(1)}(1)-1)(b_{1}+1)+(q^{(2)}(2)-1)(b_{2}+1)=1.
\]
Thus $q(1)=\max\{n,q^{(2)}(1)+n(b+s)\}=\max\{1,11\}=11$ and $q(2)=\max\{n\}=1$.
Note that the game finishes after at most 440 turns.
\end{example}
The number of required turns calculated using the stage diagram is
potentially very large. Our calculation is a crude overestimate that
could be improved with a more complicated argument but our goal was
only to prove that the strategy works. Also note that the breaker
can use her marks to ruin many subboards by ignoring some of the subboards
completely. In this case, the maker has subboards with very few defensive
marks on them and so he probably can win faster using a more refined
strategy.
\begin{cor}
\label{cor:reduce}If an animal is a $(1\a a,\lfloor\frac{b}{a}\rfloor)$-winner,
then it is also an $(a,b)$-winner.\end{cor}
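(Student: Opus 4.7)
The plan is to recognize the corollary as a direct specialization of Theorem~\ref{thm:main}. I would apply the theorem with $s := a$ summands, setting $a_i := 1$ and $b_i := \lfloor b/a \rfloor$ for every $i \in \{1,\dots,s\}$. Then $\sum_{i=1}^{s} a_i = a$ as required, and the hypothesis that the animal is a $(1 \a a,\lfloor b/a \rfloor)$-winner (which in our setting is bounded, as it plays out on a finite subboard in finitely many moves) makes it a bounded $(a_i \a a, b_i)$-winner for every $i$. The theorem then yields an $(a, B)$-winning strategy with
\[
B \;=\; \sum_{i=1}^{s} b_i + s - 1 \;=\; a\lfloor b/a \rfloor + a - 1.
\]

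The second step is the arithmetic check that $B \ge b$: writing $b = a\lfloor b/a\rfloor + r$ with $0 \le r \le a-1$, we have $B - b = a - 1 - r \ge 0$. Finally I would invoke monotonicity of the achievement game in the breaker's bias: any $(a,B)$-winning strategy with $B \ge b$ is also an $(a,b)$-winning strategy, because the maker can pretend the breaker additionally places $B - b$ ``phantom'' marks per turn on distant, game-irrelevant cells and then follow the original winning strategy verbatim. Combining this with the previous step yields the $(a,b)$-winning strategy that the corollary claims.

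There is essentially no technical obstacle here; the work has already been done inside Theorem~\ref{thm:main}. The only points requiring any care are the trivial arithmetic $a\lfloor b/a\rfloor + a - 1 \ge b$ and the monotonicity of winning in the breaker's bias, both of which are one-liners. The one bookkeeping subtlety to flag is that the corollary, as stated, does not say ``bounded''; this is consistent with the paper's running convention that the known winning strategies for polyform achievement are automatically bounded, so the hypothesis of Theorem~\ref{thm:main} is met in every application.
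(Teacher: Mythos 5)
Your proposal is exactly the paper's proof: specialize Theorem~\ref{thm:main} with $s=a$, $a_i=1$, $b_i=\lfloor b/a\rfloor$, and then use monotonicity in the breaker's bias (Lemma~\ref{lem:morea}) since $a\lfloor b/a\rfloor + a - 1 \ge b$. You simply spell out the arithmetic and the monotonicity step that the paper leaves implicit.
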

\begin{proof}
 The result is a special case of Theorem~\ref{thm:main} with $a_{i}=1$
and $b_{i}=\lfloor\frac{b}{a}\rfloor$ for all $i\in\{1,\ldots,a\}$.
\end{proof}
Note that finding a proof sequence for the $(1\a a,\lfloor\frac{b}{a}\rfloor)$
game is often much simpler than for the $(a,b)$ game. The following
is an easy consequence.
\begin{cor}
\label{cor:bLEa}If $b<a$ then any animal is an $(a,b)$-winner.
\end{cor}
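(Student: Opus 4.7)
The plan is to reduce immediately to Corollary~\ref{cor:reduce}. Since $b < a$, we have $\lfloor b/a\rfloor = 0$, so Corollary~\ref{cor:reduce} tells us it suffices to show that an arbitrary animal $A$ is a bounded $(1\a a,0)$-winner, that is, a bounded winner in the game where the breaker is not allowed to mark any cell at all.

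This auxiliary game is essentially trivial: fix any placement of $A$ on the board. If $|A|\le a$ the maker wins in a single (last) turn by marking the $|A|$ cells of that placement (together with $a-|A|$ arbitrary extra cells, which do no harm since the maker only needs to contain a copy of $A$). If $|A|>a$, the maker marks one cell of the chosen placement on each of the first $|A|-a$ turns, and then marks the remaining $a$ cells on the last turn. In either case, the breaker's inaction means that no maker move can ever be spoiled, and the game terminates after at most $\max(1,|A|-a+1)$ turns on a finite subboard consisting of the chosen placement of $A$. Hence the maker is a bounded winner in the $(1\a a,0)$ game.

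Applying Corollary~\ref{cor:reduce} then yields that $A$ is an $(a,b)$-winner, which is what we wanted. There is no real obstacle in this argument; the only thing to be slightly careful about is confirming that the trivial $(1\a a,0)$ victory qualifies as \emph{bounded} (i.e., happens within a fixed number of moves on a finite subboard), which is immediate from the explicit strategy above.
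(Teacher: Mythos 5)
Your proof is correct and follows exactly the route the paper intends: the paper states Corollary~\ref{cor:bLEa} is ``an easy consequence'' of Corollary~\ref{cor:reduce} without supplying details, and you have filled those details in faithfully (noting $\lfloor b/a\rfloor=0$ and observing that the $(1\a a,0)$ game, with an inert breaker, is trivially a bounded win). Your extra care in verifying the \emph{bounded} winner hypothesis, which Theorem~\ref{thm:main} actually requires even though Corollary~\ref{cor:reduce} as worded omits it, is a welcome small tightening rather than a departure.
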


\section{\label{sec:The-priority-strategy}The priority strategy}

One of the difficulties of the theory of achievement games is the
lack of strategies for the breaker other than the pairing strategy
based on pavings. In this section we describe a new strategy for the
breaker.
\begin{defn}
An \emph{$(a,b)$ priority strategy} is a strategy for the breaker.
Let $(x_{1},\ldots,x_{a})$ be an ordering of the current marks of
the maker. The ordering can depend for example on the location of
the marked cell or on the relative positions of the marks. By default
we order them using the lexicographic order of the coordinates. The
priority strategy assigns a \emph{response set} $R_{x_{i}}$ of \emph{response
cells} for each $x_{i}$. The priority of the response cells in $R_{x_{i}}$
is determined by a \emph{priority number}. A smaller number means
higher priority. In the simplest case, the priority numbers of the
response cells are the same for each mark of the breaker. In more
complicated cases, the priority numbers of the response cells may
depend on the location or on the ordering of the maker marks. The
priority numbers in a response set can change after each breaker mark.
The breaker tries to mark one of the highest priority unmarked cells
in each of the response sets following the order $R_{x_{1}},\ldots,R_{x_{a}},R_{x_{1}},\ldots$
until she runs out of marks. If there are no unmarked cells in a response
set, then the breaker moves to the next response set. If the breaker
is not able to use all her $b$ marks, then she can use the remaining
marks on random cells. 
\end{defn}
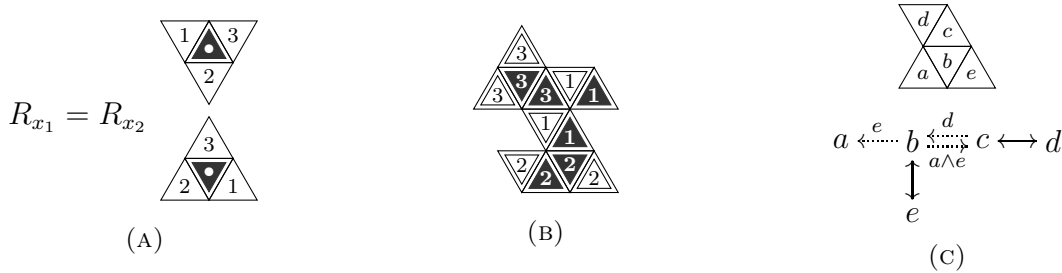
\begin{figure}
\renewcommand\trrad{.37}

\hfill{}\subfloat[\label{fig:priorityStrA}]{$R_{x_{1}}=R_{x_{2}}$%
\begin{tabular}{c}
\begin{tikzpicture}
\trUMC{0}{1}{\bullet}
\trD{-1}{1}{1}{}{}{}
\trD{0}{1}{3}{}{}{}
\trD{0}{0}{2}{}{}{}
\end{tikzpicture}\tabularnewline
\begin{tikzpicture}
\trDMC{0}{0}{\bullet}
\trU{0}{0}{2}{}{}{}
\trU{1}{0}{1}{}{}{}
\trU{0}{1}{3}{}{}{}
\end{tikzpicture}\tabularnewline
\end{tabular}

}\hfill{}\subfloat[\label{fig:priorityStrB}]{%
\begin{tabular}{c}
\begin{tikzpicture}
\trUBC{-1}{1}{3}
\trUBC{-1}{2}{3}
\trDMC{-1}{1}{\text{\bf 3}}
\trUMC{0}{1}{\text{\bf 3}}
\trDBC{0}{0}{1}
\trDBC{0}{1}{1}
\trUMC{1}{0}{\text{\bf 1}}
\trUMC{1}{1}{\text{\bf 1}}
\trDMC{1}{-1}{\text{\bf 2}}
\trUMC{1}{-1}{\text{\bf 2}}
\trDBC{0}{-1}{2}
\trUBC{2}{-1}{2}
\end{tikzpicture}\tabularnewline
\end{tabular}}\hfill{}\subfloat[\label{fig:priorityStrC}]{%
\begin{tabular}{c}
\begin{tikzpicture}
\trU{-1}{1}{a}{}{}{}
\trU{-1}{2}{c}{}{}{}
\trD{-1}{1}{b}{}{}{}
\trD{-2}{2}{d}{}{}{}
\trU{0}{1}{e}{}{}{}
\end{tikzpicture}\tabularnewline
\xymatrix@=5mm{
a & b \ar@{<->}[d] \ar@{.>}_e[l] \ar@<-2pt>@{.>}_{a \land e}[r] & c \ar@{<->}[r] \ar@<-2pt>@{.>}_d[l] & d \\
  & e
}\tabularnewline
\end{tabular}}\hfill{}

\caption{(a) A $(2,2)$ priority strategy. The bullets indicate the marks of
the maker in the current turn. (b) A possible game play where the
breaker follows the priority strategy. The marks of the maker are
black while the marks of the breaker are white. (c) A dependency digraph
of a placement of a goal animal. }
\end{figure}

Note that every paving strategy is a priority strategy where the cells
with priority one are guaranteed to be available.
\begin{example}
Figure~\ref{fig:priorityStrA} shows a graphical description of a
$(2,2)$ priority strategy. For each mark of the maker, the breaker
has three possible response cells. The priorities are the same for
both maker marks but they depend on the type of the cell. The breaker
first tries to mark the cells containing a $1$. If any of those cells
are not available because they are already marked, then she tries
to mark the cells containing a $2$ and so on. If none of these cells
are available, then she marks random cells. Figure~\ref{fig:priorityStrB}
shows a possible position as a result of the priority strategy. The
breaker marks a priority 2 and a priority 3 cell in the last turn
since no priority 1 cell is available.
\end{example}
Note that the graphical representations of priority strategies are
generally not invariant under reflections or rotations.
\begin{example}
Figure~\ref{fig:24P41A} shows a graphical description of a $(2,4)$
priority strategy where the priority numbers in a response set are
not constant. On her first mark in the response set, the breaker tries
to mark cell $\alpha_{1}$. She marks $\alpha_{2}$ if $\alpha_{1}$
is already marked. On her second mark in the same response set, the
breaker tries to mark cell $\beta_{1}$ and falls back on $\beta_{2}$
if $\beta_{1}$ is already marked. \end{example}
\begin{defn}
Given a priority strategy and a placement of the goal animal we have
a \emph{dependency relation} on the set of cells of the goal animal.
We write $a\le b$ if the maker cannot achieve the placement of the
goal animal if he tries to mark cell $a$ in turn that comes later
than the turn in which he marks cell $b$. We write $a\sim b$ if
$a$ and $b$ have to be marked by the maker in the same turn, that
is, $a\le b$ and $b\le a$. 
\end{defn}
It is clear that the dependency relation is transitive.
\begin{defn}
The dependency relation can be captured using a \emph{dependency digraph}.
The vertex set of the digraph is the set of cells of the goal animal.
We use three types of arrows:
\begin{itemize}
\item Arrow $\xymatrix@1@=6mm{a \ar[r] & b}$ is called an \emph{unconditional
arrow.} It indicates that cell $b$ cannot be marked after cell $a$.
We add this arrow when response cell $b$ in $R_{a}$ has maximal
priority, that is, a priority in the set $\{1,\alpha_{1},\beta_{1},\ldots\}$.
An unmarked cell $b$ in this situation is going to be marked by the
breaker right after the maker marks cell $a$. 
\item Arrow $\xymatrix@1@=11mm{a \ar@{.>}^{l_1,\ldots,l_k}[r] & b}$ is
called a \emph{conditional arrow}. It indicates that cell $b$ cannot
be marked after cell $a$ if condition $l_{i}$ is satisfied for some
$i$. Each condition is of the form $l_{i}=x_{1}\land\cdots\land x_{r}$
where $x_{j}$ is a logical variable corresponding to a cell of the
goal animal for all $j$. The value of $x_{j}$ is true if cell $x_{j}$
is marked by either the maker or the breaker in a turn no later than
the turn in which $a$ is marked. We add this arrow when response
cell $b$ in $R_{a}$ does not have maximal priority but sufficiently
many marked response cells $x_{1},\ldots,x_{r}$ in $R_{a}$ with
higher priority makes the priority of $b$ large enough to guarantee
the marking of $b$. In this situation, cell $b$ left unmarked by
the maker is going to be marked by the breaker right after the maker
marks cell $a$ in spite of the lower priority.
\item Arrow $\xymatrix@1@=11mm{a \ar@{.>>}^{l_1,\ldots,l_k}[r] & b}$ is
called a \emph{secondary arrow}. This arrow is similar to the conditional
arrow but condition $x_{1}\land\cdots\land x_{r}$ is interpreted
differently. We add this arrow when response cell $b$ in $R_{a}$
does not have maximal priority but sufficiently many response cells
in $R_{a}$ with higher priority are already marked because these
response cells are also maximal priority response cells for one of
the cells $x_{1},\ldots,x_{r}$ already marked by the maker. So the
value of $x_{j}$ is true if cell $x_{j}$ is marked in a turn earlier,
then the turn in which $a$ is marked. The value of $x_{j}$ can be
true even if cells $x_{j}$ and $a$ are marked at the same turn.
For this to happen, $x_{j}$ and $a$ have to be ordered in the set
of current marks by the priority strategy to make the common response
cell $z$ of $x_{j}$ and $a$ an earlier response cell for $x_{j}$
than a response cell for $a$. 
\end{itemize}
We might omit secondary arrows and even conditional arrows in our
dependency digraphs if they are not needed.\end{defn}
\begin{example}
Figure~\ref{fig:priorityStrC} shows a goal animal and its dependency
digraph based on the priority strategy presented in the same figure.
The arrow $\xymatrix@1@=11mm{b \ar@{.>}^{a\land e}[r] & c}$ has one
label. This arrow indicates that if the maker marks cell $b$ but
leaves cell $c$ unmarked, then the breaker is going to mark cell
$c$ assuming cells $a$ and $e$ are already marked. 

If the maker wants to mark all the cells in $\{a,b,c,d,e\}$, then
he needs to make sure that in each turn he marks all the cells that
are dependent on other marks of his own. For example he can mark cell
$a$ without marking any other cells of the goal animal. On the other
hand if he marks cell $b$, then he has to mark all the other cells
in the same turn. To see this first note that there is a solid arrow
from $b$ to $e$ so cell $e$ needs to be marked. This implies that
cell $a$ needs to be marked since the label of the dotted arrow is
satisfied. Now the label on the arrow from $b$ to $c$ is satisfied
so cell $c$ must be marked as well. Finally cell $d$ needs to be
marked since there is a solid arrow from $c$ to $d$. Thus we have
$a\le b\sim c\sim d\sim e$.

It is easy to see that there are only two options for marking all
the cells in $\{a,b,c,d,e\}$. The first option is to mark cell $a$
in a turn and then mark the rest of the cells in a later turn. The
other option is to mark all the cells in a single turn. Both of these
options are impossible since the maker can only mark two cells in
a turn.
\end{example}

\begin{example}
\label{exa:secondary}Figure~\ref{fig:24P43B} shows a dependency
digraph containing secondary arrows. The secondary arrows exist because
the response cell for $a$ with priority $\beta_{1}$ is the same
as the response cell for $d$ with priority $\alpha_{1}$. This common
response cell $x$ is located above $a$ and on the left of $d$.
We clearly have $a,d\le b\sim c$. If the maker marks $a$ before
$d$, then $x$ is marked as a response cell for $a$ and so the vertical
secondary arrow from $d$ to $b$ is activated implying $a\le b\sim c\sim d$.
If the maker marks $d$ before $a$, then $x$ is marked as a response
cell for $d$ and so the horizontal secondary arrow from $a$ to $b$
is activated implying $d\le a\sim b\sim c$. 

The situation is a bit more delicate if the maker marks $a$ and $d$
in the same turn. In the lexicographic order $a$ is considered smaller
than $d$ so the breaker first marks a response cell for $a$. This
is the response cell on the left of $a$ with priority $\alpha_{1}$.
Next the breaker marks a response cell for $d$. This response cell
is $x$ with priority $\alpha_{1}$. Hence the horizontal secondary
arrow from $a$ to $b$ is activated and we have $d\le a\sim b\sim c$.

In all three cases at least three cells have to be marked in a single
turn.
\end{example}

\section{\label{sec:The-threshold-sequence}The threshold sequence}

Our main goal is to determine all the $(a,b)$ pairs for which an
animal is an $(a,b)$-winner. We use the following object to capture
this information.
\begin{defn}
The \emph{threshold sequence} $\tau(A)=(b_{1},b_{2},\ldots)$ for
a given animal $A$ is a sequence of numbers in ${\bf W}\cup\{\infty\}$
such that $b_{n}$ is the greatest value for which $A$ is an $(n,b_{n})$\emph{-winner}.
\end{defn}
The following is an easy consequence of the fact that marking more
cells could never hurt the players. 
\begin{lem}
\label{lem:morea}Let $A$ be an animal and $n,k\in{\bf W}$. If $A$
is an $(a,b)$-winner, then it is also an $(a+n,b-k)$-winner. If
$A$ is an $(a,b)$-loser, then it is also an $(a-n,b+k)$-loser.\end{lem}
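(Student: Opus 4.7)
My plan begins by noting that the two implications are contrapositives of each other. Setting $a'=a-n$ and $b'=b+k$, the contrapositive of the second implication reads ``if $A$ is an $(a',b')$-winner, then $A$ is an $(a'+n,b'-k)$-winner,'' which is exactly the first implication with $a,b$ renamed to $a',b'$. So it suffices to prove the first implication. (The edge cases where $a-n<0$ or $b-k<0$ are handled trivially by Corollary~\ref{cor:bLEa} and its dual.)

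For the first implication, the intuition is that extra marks for the maker and fewer marks for the breaker can only help the maker; my plan is to make this rigorous via a simulation. Let $\sigma$ be a winning maker strategy for the $(a,b)$ game. I would describe a maker strategy $\sigma'$ for the $(a+n,b-k)$ game that maintains a virtual $(a,b)$ game in parallel. On each turn the real breaker plays $b-k$ cells; the maker adjoins $k$ \emph{phantom} breaker cells of his own choosing, drawn from cells currently unmarked in the virtual game, feeds this $b$-cell combined move to $\sigma$, marks the $a$ prescribed cells in the real game, and spends his remaining $n$ real marks on \emph{dummy} cells.

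The two things to verify are that $\sigma$'s prescribed cells remain legal (unmarked) in the real game, and that phantom and dummy choices can always be made consistently turn after turn. A clean way to ensure both, when $n\le k$, is to take the $n$ dummies as a subset of the $k$ phantoms for the same turn, so that every real mark is either a $\sigma$-prescription or a virtual-breaker (phantom) cell. Under this arrangement the real marked set is contained in the virtual marked set, and legality of $\sigma$'s moves in the virtual game transfers directly to the real game. The case $n>k$ reduces to $n=k$ by placing the additional $n-k$ real maker marks on arbitrary unmarked cells; any later coincidence between such a dummy cell and a $\sigma$-prescription is treated as a cell the maker already owns.

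The main obstacle, and the place where care is needed, is precisely this bookkeeping in the case $n>k$: $\sigma$ might in a later turn prescribe a cell that the maker has already covered as a dummy, in which case the maker must complete his turn without reusing that cell. Such coincidences only help the maker; he spends the freed real mark on a fresh dummy. Since the board is infinite, fresh cells are always available, and since $\sigma$ wins in finitely many turns by hypothesis, this bookkeeping always terminates. The resulting set of real maker marks contains the copy of $A$ produced by $\sigma$, so the maker wins the $(a+n,b-k)$ game.
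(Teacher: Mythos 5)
The paper gives no proof of this lemma, merely asserting it as ``an easy consequence of the fact that marking more cells could never hurt the players.'' Your two-board simulation is the right way to rigorize that intuition, and the contrapositive reduction collapsing the two statements into one is clean.

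There is, however, a step that needs repair in the regime $n<k$. Your invariant --- every cell physically marked is also virtually marked, achieved by taking the $n$ dummies to be among the $k$ phantoms --- does guarantee that $\sigma$'s prescriptions are free on the real board. But it does \emph{not} guarantee that the $b-k$ cells the real breaker just played are free on the \emph{virtual} board, which is what you need for the combined $b$-cell move to be a legal input to $\sigma$. When $n<k$, each turn creates $k-n$ phantoms that are virtually marked but physically unmarked, and a later real breaker move is allowed to land on one of them; feeding such a cell to $\sigma$ as a new breaker mark is illegal. The fix is simple --- suppress any such already-virtual cell from the simulated breaker move and substitute a fresh phantom, so the virtual breaker always plays $b$ genuinely new cells --- but your phrasing ``the maker adjoins $k$ phantom breaker cells'' hard-codes exactly $k$ phantoms per turn, which is sometimes too few. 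When $n\ge k$ the issue disappears, since then every phantom is a dummy and therefore physically marked, so the real breaker can never collide with one.
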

\begin{prop}
For every threshold sequence $(b_{1},b_{2},\ldots)$ there is an index
$k\in{\bf W}$ such that $b_{n}$ is finite for all $n<k$ and $b_{n}=\infty$
for all $n\ge k$.\end{prop}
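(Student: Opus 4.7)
The plan is to show that eventually every threshold becomes $\infty$ and then take $k$ to be the first index where this happens. Both ingredients are essentially already in hand.

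First I would establish that $b_a = \infty$ whenever $a \geq s(A)$. This is immediate: on the opening turn all cells are unmarked, so a maker with $a \geq s(A)$ marks can simply place a copy of $A$ (padding with $a - s(A)$ arbitrary empty cells if needed) and win before the breaker ever moves. The argument does not involve $b$, so $A$ is an $(a,b)$-winner for every $b \in {\bf W}$, giving $b_a = \infty$. This guarantees that the set $S := \{n \in {\bf N} : b_n = \infty\}$ is non-empty, and in fact $\min S \leq s(A)$.

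Next I would use Lemma~\ref{lem:morea} (with $n = 1$, $k = 0$) to see that $S$ is upward-closed: if $a \in S$ then for each $b' \in {\bf W}$ the animal is an $(a, b')$-winner, and the lemma upgrades this to an $(a+1, b')$-winner, so $a + 1 \in S$. Define $k := \min S \in {\bf W}$. By construction $b_n = \infty$ for every $n \geq k$. For $n < k$, the definition of $k$ forces $b_n \neq \infty$, and since $b_n \in {\bf W} \cup \{\infty\}$ this means $b_n$ is a finite whole number, as required.

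There is essentially no obstacle here, and the only point that deserves slight care is confirming that $b_n$ is always well-defined as a member of ${\bf W} \cup \{\infty\}$ (equivalently, that the set of $b$ for which $A$ is an $(n,b)$-winner is never empty for $n \geq 1$); this already follows from Corollary~\ref{cor:bLEa}, which implies $b_n \geq n - 1 \geq 0$.
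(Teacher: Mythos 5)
Your proof is correct and follows the same approach as the paper: establish that $b_n = \infty$ once $n$ reaches the size of the animal, then use Lemma~\ref{lem:morea} to show the set of indices with $b_n=\infty$ is upward-closed, and take $k$ to be its minimum. You spell out the appeal to Lemma~\ref{lem:morea} and the well-definedness of $b_n$ a bit more explicitly than the paper, but the argument is identical in substance.
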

\begin{proof}
If the animal $A$ has $l$ cells, then $A$ is clearly an $(l,b)$-winner
for all $b\in{\bf W}$. If $A$ is an $(k,\infty)$-winner, then it
is also an $(n,b)$-winner for all $n\ge k$ and $b\in{\bf W}$.
\end{proof}
We are going to write the threshold sequence $(b_{1},\ldots,b_{k-1},\infty,\ldots)$
where $b_{k-1}<\infty$ simply as $(b_{1},\ldots,b_{k-1},\infty)$.
\begin{prop}
In a threshold sequence $(b,\ldots,b_{k-1},\infty)$ we have $b_{i}<b_{i+1}$
for all $i\in\{1,\ldots,k-1\}$.\end{prop}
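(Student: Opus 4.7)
The case $i = k-1$ is immediate: by the preceding proposition, $b_{k-1}$ is finite while $b_k = \infty$, so $b_{k-1} < b_k$ holds trivially. I therefore focus on the remaining indices $1 \le i \le k-2$, where both $b_i$ and $b_{i+1}$ are finite and, by definition of the threshold sequence, $A$ is an $(i, b_i)$-winner. Lemma~\ref{lem:morea} already yields the weak bound $b_{i+1} \ge b_i$, so the real content is to rule out equality.

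My approach is to prove the slightly stronger statement that $A$ is an $(i+1, b_i+1)$-winner, which forces $b_{i+1} \ge b_i + 1 > b_i$. The plan is to invoke Theorem~\ref{thm:main} with the decomposition $s = 2$, $(a_1, a_2) = (i, 1)$, and $(b_1, b_2) = (b_i, 0)$. One checks that $a_1 + a_2 = i + 1$ and $b_1 + b_2 + (s-1) = b_i + 1$, so the conclusion of the theorem matches the target exactly. It remains to verify the two required bounded-winner hypotheses. First, $A$ is a bounded $(i \a i+1, b_i)$-winner: given a bounded $(i, b_i)$-winning strategy, the maker simply follows it and places his extra cell on the winning turn at an arbitrary unmarked location. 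Second, $A$ is a bounded $(1 \a i+1, 0)$-winner, which is trivial since the breaker never plays and the maker can mark a fixed placement of $A$ one cell at a time, finishing in at most $|A|$ turns.

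The main subtlety is the bounded-winner hypothesis of Theorem~\ref{thm:main}: the definition of the threshold sequence does not a priori demand bounded winners, so strictly speaking the reduction needs the underlying $(i, b_i)$-winning strategy to be bounded. In the setting of this paper, however, all winning polyform strategies arise from finite proof sequences as described in Section~\ref{sec:Preliminaries}, and are therefore bounded by construction; consequently the argument applies without modification to every animal that is actually shown to be an $(n, b_n)$-winner in the later sections.
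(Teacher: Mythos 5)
Your proposal matches the paper's proof essentially verbatim: the paper applies Theorem~\ref{thm:main} with exactly your decomposition $s=2$, $(a_1,a_2)=(i,1)$, $(b_1,b_2)=(b_i,0)$, noting that every animal is trivially a $(1,0)$-winner and that $A$ is an $(i,b_i)$-winner. Your extra remarks on the bounded-winner hypothesis and the $i=k-1$ case are sensible elaborations of details the paper leaves implicit.
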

\begin{proof}
It is clear that every animal is a $(1,0)$-winner. Since the animal
is also an $(i,b_{i})$-winner, it must be a $(i+1,b_{i}+1)$-winner
by Theorem~\ref{thm:main}.\end{proof}
\begin{prop}
If $A$ is a subset of animal $B$, then $\tau(A)(i)\ge\tau(B)(i)$
for all $i\in{\bf N}$.\end{prop}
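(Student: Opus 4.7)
The plan is to show the contrapositive-friendly statement that every $(a,b)$-winning strategy for $B$ doubles as an $(a,b)$-winning strategy for $A$. Once that is in hand, the inequality on threshold sequences is just unpacking the definition of $\tau$.

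First I would fix an embedding $\iota\colon A\hookrightarrow B$ of $A$ as a sub-animal of $B$, so that for every placement of $B$ on the board the image of $\iota$ singles out a placement of $A$ contained in it. Then, assuming $B$ is an $(a,b)$-winner, I let the maker play his winning $(a,b)$-strategy for $B$. By definition this strategy guarantees that, at some finite turn, the set of cells marked by the maker contains a congruent copy of $B$. Since $\iota(A)\subseteq B$, that same set of maker-marked cells also contains a congruent copy of $A$, namely the image of $A$ under $\iota$ composed with the congruence placing $B$. Hence the maker has achieved $A$ as well, showing that $A$ is an $(a,b)$-winner.

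This implication means that for each fixed $i\in{\bf N}$, the set
\[
S_X(i) := \{\, b\in{\bf W}\mid X\text{ is an }(i,b)\text{-winner}\,\}
\]
satisfies $S_B(i)\subseteq S_A(i)$. By Lemma~\ref{lem:morea} each $S_X(i)$ is downward closed in ${\bf W}$ (with the convention that if it is unbounded we write $\sup S_X(i) = \infty$), so $\tau(X)(i) = \sup S_X(i)$, and the inclusion $S_B(i)\subseteq S_A(i)$ yields $\tau(A)(i)\ge\tau(B)(i)$ immediately, including the case $\tau(B)(i)=\infty$.

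The argument is essentially a one-line monotonicity observation, so there is no real obstacle; the only thing to take care of is being explicit that "subset" of animals means a sub-animal up to congruence, so that a copy of $B$ automatically carries a copy of $A$ inside it, and that the weak-game winning condition only requires the maker's marks to \emph{contain} a copy of the goal rather than to equal it.
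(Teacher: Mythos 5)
Your proposal is correct and matches the paper's argument: both rest on the observation that a winning maker strategy for $B$ also wins for $A$, since any copy of $B$ contains a copy of $A$. You simply spell out the details (the embedding and the downward-closed sets $S_X(i)$ via Lemma~\ref{lem:morea}) that the paper leaves implicit.
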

\begin{proof}
Any successful maker strategy for $B$ is also successful for $A$
. Hence if $B$ is an $(a,b)$-winner, then so is $A$.
\end{proof}
The following is an easy consequence of Corollary~\ref{cor:bLEa}.
\begin{prop}
For all animal $A$ we have $\tau(A)(i)\ge i-1$.
\end{prop}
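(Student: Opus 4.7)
The plan is to read off the bound directly from Corollary~\ref{cor:bLEa}. That corollary asserts that whenever $b<a$, every animal is an $(a,b)$-winner. Specializing to $a=i$ and $b=i-1$, the inequality $i-1<i$ holds for every $i\in\mathbf{N}$, so the corollary guarantees that the given animal $A$ is an $(i,i-1)$-winner.

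By the definition of the threshold sequence, $\tau(A)(i)$ is the greatest $b\in\mathbf{W}\cup\{\infty\}$ such that $A$ is an $(i,b)$-winner. Since $A$ is already known to be an $(i,i-1)$-winner, the supremum is at least $i-1$, which is exactly the desired bound $\tau(A)(i)\ge i-1$. There is essentially no obstacle: the only thing to check is the boundary case $i=1$, which reduces to the trivial observation that $A$ is a $(1,0)$-winner (the breaker never marks), giving $\tau(A)(1)\ge 0$; this is consistent with the fact that threshold values lie in $\mathbf{W}\cup\{\infty\}$.
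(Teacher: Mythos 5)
Your proof is correct and is precisely the ``easy consequence of Corollary~\ref{cor:bLEa}'' that the paper alludes to without spelling out: take $a=i$, $b=i-1$ in that corollary and invoke the definition of $\tau(A)(i)$.
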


\section{\label{sec:Polyiamonds}Polyiamonds}

\begin{figure}
\begin{tabular}{c|ccccccccc}
 & \begin{tikzpicture}
\trU{0}{0}{}{}{}{}
\end{tikzpicture} &  & \begin{tikzpicture}
\trU{0}{0}{}{}{}{}
\trD{0}{0}{}{}{}{}
\end{tikzpicture} &  & \begin{tikzpicture}
\trU{0}{0}{}{}{}{}
\trD{0}{0}{}{}{}{}
\trU{1}{0}{}{}{}{}
\end{tikzpicture} &  & \begin{tikzpicture}
\trU{0}{0}{}{}{}{}
\trD{0}{0}{}{}{}{}
\trU{1}{0}{}{}{}{}
\trD{1}{0}{}{}{}{}
\end{tikzpicture} & \begin{tikzpicture}
\trU{0}{0}{}{}{}{}
\trD{0}{0}{}{}{}{}
\trD{-1}{1}{}{}{}{}
\trU{0}{1}{}{}{}{}
\end{tikzpicture} & \begin{tikzpicture}
\trU{0}{0}{}{}{}{}
\trD{0}{0}{}{}{}{}
\trU{1}{0}{}{}{}{}
\trU{0}{1}{}{}{}{}
\end{tikzpicture}\tabularnewline
$\vphantom{\int_{\int}}A$ & $T_{1,1}$ &  & $T_{2,1}$ &  & $T_{3,1}$ &  & $T_{4.1}$ & $T_{4,2}$ & $T_{4,3}$\tabularnewline
\hline 
$\vphantom{\int^{\int}}\tau(A)$ & $(\infty)$ &  & $(2,\infty)$ &  & $(1,5,\infty)$ &  & $(0,3,8,\infty)$ & $(0,3,8,\infty)$ & $(0,3,8,\infty)$\tabularnewline
\end{tabular}\caption{\label{fig:polyiamonds}Polyiamonds and their threshold sequences
up to size four.}
\end{figure}

Unbiased polyiamond games are studied in \cite{bode.harborth:triangular,bode.harborth:triangle,sieben:sitePerimeter}.
In this section we find the threshold sequences of polyiamonds up
to size four. The results are summarized in Figure~\ref{fig:polyiamonds}.
The proof of the following result is left to the reader.
\begin{prop}
The threshold sequences of $T_{1,1}$ and $T_{2,1}$ are $\tau(T_{1,1})=(\infty)$
and $\tau(T_{2,1})=(2,\infty)$.
\end{prop}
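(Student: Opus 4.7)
The plan is to split the claim into four sub-claims and handle each one independently; three of them follow immediately from tools already in the paper, and the fourth is a one-sentence paving argument.

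First I would dispatch $T_{1,1}$ and the $a=2$ entry of $\tau(T_{2,1})$ by a trivial observation: whenever $a\ge |A|$ the maker wins the $(a,b)$ game by marking a copy of $A$ on his very first turn. This immediately gives $\tau(T_{1,1})(a)=\infty$ for every $a\ge 1$ and $\tau(T_{2,1})(a)=\infty$ for every $a\ge 2$.

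Next I would show $T_{2,1}$ is a $(1,2)$-winner by a direct invocation of Proposition~\ref{pro:twostep}. Take any cell $x$ of the triangular tiling; it has $\delta=3$ neighbors, so there are exactly $k=3$ placements $A_1,A_2,A_3$ of $T_{2,1}$ containing $x$, and any two of them meet precisely in $\{x\}$. Since $|T_{2,1}|=2=a+1$ and $ak=3>2=b$, Proposition~\ref{pro:twostep} applies and the maker wins.

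The only remaining step, and the only one requiring an actual breaker strategy, is that $T_{2,1}$ is a $(1,3)$-loser. I would use a paving strategy where the relation is ``$c$ is adjacent to $c'$'' on the triangular board. Every cell has exactly $\delta=3$ neighbors, so this relation is a $3$-paving in the sense of Section~\ref{sec:Preliminaries}. Since any placement of $T_{2,1}$ consists of two edge-adjacent triangles, every placement of the goal contains a related pair; hence the associated paving strategy wins for the breaker. Combining the four sub-claims yields $\tau(T_{1,1})=(\infty)$ and $\tau(T_{2,1})=(2,\infty)$. I do not anticipate any real obstacle here: the only non-routine step is recognizing that the ``adjacency'' relation is itself the paving one needs, and that is forced by the small size of $T_{2,1}$.
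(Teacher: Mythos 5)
The paper leaves this proof to the reader, so there is no ``paper's proof'' to compare against; your argument is a complete and correct one. The decomposition into four sub-claims is clean: $T_{1,1}$ trivially, $T_{2,1}$ for $a\ge 2$ trivially, the $(1,2)$-win by Proposition~\ref{pro:twostep} with $k=\delta=3$, and the $(1,3)$-loss by the adjacency $3$-paving. One small simplification worth noting: the $(1,3)$-loser claim is also an immediate instance of Proposition~\ref{pro:surround} with $a=1<2=|T_{2,1}|$ and $a\delta=3\le 3=b$; this is of course the same ``mark every neighbor'' idea as your adjacency paving, just packaged so that no explicit paving needs to be exhibited. Either phrasing is fine, and the rest of the argument is exactly what one would expect.
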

We now consider the size three polyiamond.

\begin{figure}
\renewcommand\trrad{.45}

\hfill{}\subfloat[\label{fig:proofSeq11T32}]{%
\begin{tabular}{ccc}
$s_{0}$ & $s_{1}$ & $s_{2}$\tabularnewline
\begin{tikzpicture}
\trUM{0}{0}
\trDM{0}{0}
\trUM{1}{0}
\end{tikzpicture} & \begin{tikzpicture}
\trD{0}{0}{A}{}{}{}
\trU{2}{0}{B}{}{}{}
\trUM{1}{0}
\trDM{1}{0}
\end{tikzpicture} & \begin{tikzpicture} 
\trUM{1}{0}
\trD{1}{0}{Ab}{}{}{}
\trD{0}{0}{Ca}{}{}{}
\trU{0}{1}{c}{}{}{}
\trU{2}{0}{a}{}{}{}
\trD{1}{-1}{Bc}{}{}{}
\trU{1}{-1}{b}{}{}{}
\end{tikzpicture}\tabularnewline
\end{tabular}\lower-0.9cm\hbox{
$\xymatrix@=5mm{
s_2 \ar[d]_{a,b,c} \\
s_1 \ar[d]_{a,b}  \\
s_0 \\
}$
}

}\hfill{}\renewcommand\trrad{.35}\subfloat[\label{fig:23PrSeT41}]{%
\begin{tabular}{ccc}
$s_{0}$ &  & $s_{1}$\tabularnewline
\begin{tikzpicture}
\trUM{0}{0}
\trDM{0}{0}
\trUM{1}{0}
\trDM{1}{0}
\end{tikzpicture} &  & \begin{tikzpicture}
\trD{0}{0}{A}{}{}{}
\trU{0}{1}{A}{}{}{}
\trD{1}{0}{B}{}{}{}
\trU{1}{1}{B}{}{}{}
\trD{-1}{2}{C}{}{}{}
\trU{-1}{3}{C}{}{}{}
\trD{0}{2}{D}{}{}{}
\trU{0}{3}{D}{}{}{}
\trUM{0}{2}
\trDM{0}{1}
\end{tikzpicture}\tabularnewline
\end{tabular}\lower-0.3cm\hbox{
$\xymatrix@=5mm{
s_1 \ar[d]_{a,b,c,d}  \\
s_0 \\
}$
}}\hfill{}

\caption{(a) A $(1,1)$ proof sequence for $T_{3,1}$. (b) A $(2,3)$ proof
sequence for $T_{4,1}$. }
\end{figure}
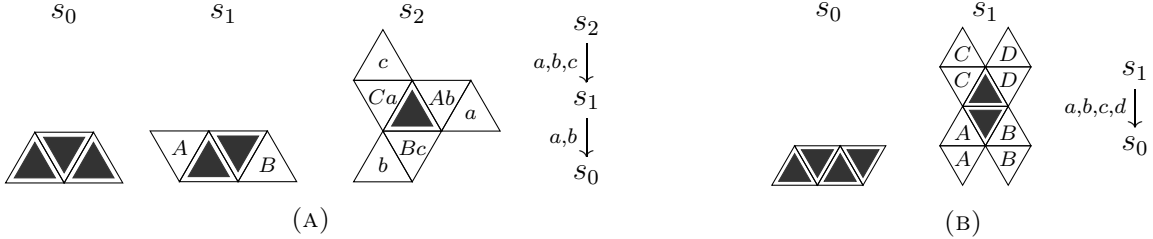

\begin{figure}
\begin{tabular}{ccc}
\renewcommand\trrad{.3}
\begin{tikzpicture} 
\trTileUD{0}{1}{0}{0}
\trTileUD{0}{2}{0}{1}
\trTileUD{1}{1}{1}{0}
\trTileUD{1}{2}{1}{1}
\end{tikzpicture} & \renewcommand\trrad{.3}
\begin{tikzpicture} 
\trTileUD{0}{1}{0}{0}
\trTileUD{-1}{1}{-1}{1}
\trTileUD{1}{1}{0}{1}
\trTileUD{0}{2}{0}{2}
\trTileUD{-1}{2}{-2}{2}
\trTileUD{-1}{3}{-1}{2}
\trTileUD{1}{2}{1}{1}
\trTileUD{1}{0}{1}{0}
\trTileUD{2}{1}{2}{1}
\trTileUD{3}{0}{2}{0}
\trTileUD{2}{0}{2}{-1}

\trTileUD{3}{1}{3}{0}
\trTileUD{2}{1}{2}{1}
\trTileUD{4}{1}{3}{1}
\trTileUD{3}{2}{3}{2}
\trTileUD{2}{2}{1}{2}
\trTileUD{2}{3}{2}{2}
\trTileUD{4}{2}{4}{1}
\trTileUD{4}{0}{4}{0}
\trTileUD{5}{1}{5}{1}
\trTileUD{6}{0}{5}{0}
\trTileUD{5}{0}{5}{-1}
\end{tikzpicture} & \renewcommand\trrad{.3}
\begin{tikzpicture} 
\trTileUD{1}{0}{1}{0}
\trTileUD{1}{1}{1}{0}
\trTileUD{1}{1}{0}{1}
\trTileUD{0}{1}{0}{1}
\trTileUD{1}{0}{0}{0}
\trTileUD{0}{1}{0}{0}

\trTileUD{2}{1}{2}{1}
\trTileUD{2}{2}{2}{1}
\trTileUD{2}{2}{1}{2}
\trTileUD{1}{2}{1}{2}
\trTileUD{2}{1}{1}{1}
\trTileUD{1}{2}{1}{1}

\trTileUD{3}{-1}{3}{-1}
\trTileUD{3}{0}{3}{-1}
\trTileUD{3}{0}{2}{0}
\trTileUD{2}{0}{2}{0}
\trTileUD{3}{-1}{2}{-1}
\trTileUD{2}{0}{2}{-1}

\trTileUD{4}{0}{4}{0}
\trTileUD{4}{1}{4}{0}
\trTileUD{4}{1}{3}{1}
\trTileUD{3}{1}{3}{1}
\trTileUD{4}{0}{3}{0}
\trTileUD{3}{1}{3}{0}
\end{tikzpicture}\tabularnewline
$\mathsf{T}_{1,1}$ & $\mathsf{T}_{1,2}$ & $\mathsf{T}_{2,1}$\tabularnewline
\end{tabular}

\caption{\label{fig:TPavings}Triangular pavings.}
\end{figure}
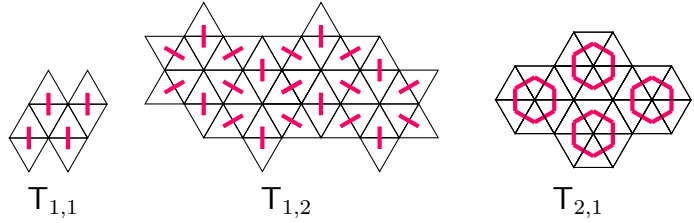

\begin{prop}
The threshold sequence of $T_{3,1}$ is $\tau(T_{3,1})=(1,5,\infty)$.\end{prop}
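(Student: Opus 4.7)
The plan is to verify the threshold sequence $(1,5,\infty)$ by establishing five claims: $T_{3,1}$ is a $(1,1)$-winner, a $(1,2)$-loser, a $(2,5)$-winner, a $(2,6)$-loser, and a $(3,b)$-winner for every $b\in\mathbf{W}$. By Lemma~\ref{lem:morea} together with the definition of the threshold sequence, these five claims imply $\tau(T_{3,1})(1)=1$, $\tau(T_{3,1})(2)=5$, and $\tau(T_{3,1})(3)=\infty$, which is exactly the asserted sequence.

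Three of the claims require essentially no new work. The $(1,1)$-winner claim is the content of the proof sequence already drawn in Figure~\ref{fig:proofSeq11T32}. The $(2,5)$-winner claim is exactly Example~\ref{exa:T31}, which applies Proposition~\ref{pro:twostep} with $k=\delta=3$: a chosen up-triangle serves as the common cell of three congruent placements of $T_{3,1}$, so $ak=6>5=b$. Finally, the $(3,b)$-winner claim for all $b$ is immediate because $|T_{3,1}|=3$, so the maker simply marks all three cells of a copy of $T_{3,1}$ in a single opening move and wins.

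For the $(2,6)$-loser claim I would invoke Proposition~\ref{pro:surround} directly with $a=2$, $\delta=3$, and $b=6$. Since $a<|T_{3,1}|=3$ and $a\delta=6\le b$, the breaker has enough marks in each turn to seal off the entire site-perimeter of the maker's two-cell play, so $T_{3,1}$ is a $(2,6)$-loser.

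The one substantive step is the $(1,2)$-loser claim, which is also the main obstacle. For this I would use the $2$-paving $\mathsf{T}_{2,1}$ exhibited in Figure~\ref{fig:TPavings} and check that every placement of the trapezoid $T_{3,1}$ on the triangular board meets at least one paired pair of cells; then the paving strategy from Section~\ref{sec:Preliminaries} wins the $(1,2)$ game for the breaker. Because the paving is doubly periodic and $T_{3,1}$ has only finitely many orientations on the triangular grid (three up to reflection), the verification reduces to a small finite case analysis within a fundamental domain, but it must be carried out with care to ensure that no embedding of the up-down-up trapezoid slips through with all three of its cells mutually unpaired.
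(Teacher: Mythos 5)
Your proposal is correct and follows essentially the same route as the paper: the $(1,1)$-winner claim via Figure~\ref{fig:proofSeq11T32}, the $(1,2)$-loser claim via the $2$-paving $\mathsf{T}_{2,1}$, the $(2,5)$-winner claim via Example~\ref{exa:T31}, and the $(2,6)$-loser claim via Proposition~\ref{pro:surround}. The only additions are the explicit mention of the trivial $(3,b)$-winner claim and the bookkeeping via Lemma~\ref{lem:morea}, which the paper leaves implicit.
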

\begin{proof}
The proof sequence of Figure~\ref{fig:proofSeq11T32} shows that
$T_{3,1}$ is a $(1,1)$-winner. The breaker strategy based on the
2-paving $\mathsf{T}_{2,1}$ shown in Figure~\ref{fig:TPavings}
makes $T_{3,1}$ a $(1,2)$-loser. 

We saw in Example~\ref{exa:T31} that $T_{3,1}$ is a $(2,5)$-winner.
Proposition~\ref{pro:surround} implies that $T_{3,1}$ is a $(2,6)$-loser.
\end{proof}
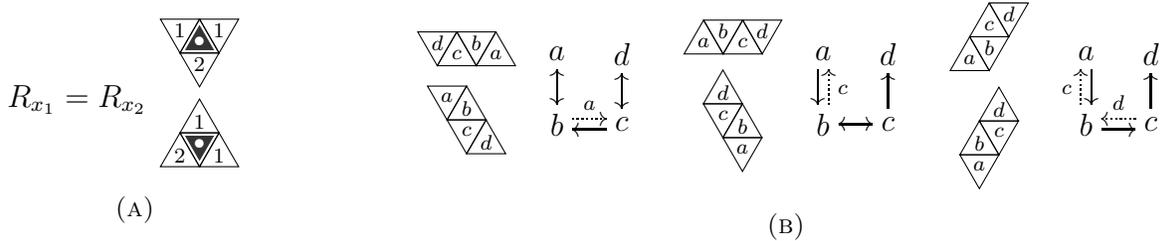
\begin{figure}
\renewcommand\trrad{.3}

\hfill{}\subfloat[\label{fig:24T41A}]{$R_{x_{1}}=R_{x_{2}}$%
\begin{tabular}{c}
\begin{tikzpicture}
\trUMC{0}{1}{\bullet}
\trD{-1}{1}{1}{}{}{}
\trD{0}{1}{1}{}{}{}
\trD{0}{0}{2}{}{}{}
\end{tikzpicture}\tabularnewline
\begin{tikzpicture}
\trDMC{0}{0}{\bullet}
\trU{0}{0}{2}{}{}{}
\trU{1}{0}{1}{}{}{}
\trU{0}{1}{1}{}{}{}
\end{tikzpicture}\tabularnewline
\end{tabular}

}\hfill{}\subfloat[\label{fig:24T41B}]{

\begin{tabular}{c}
\begin{tikzpicture}
\trD{-1}{1}{d}{}{}{}
\trU{0}{1}{c}{}{}{}
\trD{0}{1}{b}{}{}{}
\trU{1}{1}{a}{}{}{}
\end{tikzpicture}\tabularnewline
\begin{tikzpicture}
\trD{-1}{0}{a}{}{}{}
\trU{0}{0}{b}{}{}{}
\trD{0}{-1}{c}{}{}{}
\trU{1}{-1}{d}{}{}{}
\end{tikzpicture}\tabularnewline
\end{tabular}\raise0.7cm\hbox{
\xymatrix@=4mm{
a \ar@{<->}[d] & d \\
b \ar@<2pt>@{.>}^a[r] & c \ar@{<->}[u] \ar@<2pt>[l]
}
}$\quad$%
\begin{tabular}{c}
\begin{tikzpicture}
\trU{0}{1}{a}{}{}{}
\trD{0}{1}{b}{}{}{}
\trU{1}{1}{c}{}{}{}
\trD{1}{1}{d}{}{}{}
\end{tikzpicture}\tabularnewline
\begin{tikzpicture}
\trD{0}{0}{a}{}{}{}
\trU{0}{1}{b}{}{}{}
\trD{-1}{1}{c}{}{}{}
\trU{-1}{2}{d}{}{}{}
\end{tikzpicture}\tabularnewline
\end{tabular}\raise0.7cm\hbox{
\xymatrix@=4mm{
a \ar@<-2pt>[d] & d \\
b \ar@<-2pt>@{.>}_c[u] \ar@{<->}[r] & c \ar[u] 
}
}$\quad$%
\begin{tabular}{c}
\begin{tikzpicture}
\trD{0}{1}{d}{}{}{}
\trU{0}{1}{c}{}{}{}
\trD{0}{0}{b}{}{}{}
\trU{0}{0}{a}{}{}{}
\end{tikzpicture}\tabularnewline
\begin{tikzpicture}
\trD{0}{0}{a}{}{}{}
\trU{0}{1}{b}{}{}{}
\trD{0}{1}{c}{}{}{}
\trU{0}{2}{d}{}{}{}
\end{tikzpicture}\tabularnewline
\end{tabular}\raise0.7cm\hbox{
\xymatrix@=4mm{
a \ar@<2pt>[d] & d \\
b \ar@<2pt>@{.>}^c[u] \ar@<-2pt>[r] & c \ar@<-2pt>@{.>}_d[l] \ar[u] 
}
}

}\hfill{}

\caption{\label{fig:24T41}(a) A $(2,4)$ priority strategy for the breaker
for $T_{4,1}$. (b) Dependency digraphs of the cells in the orientations
of $T_{4,1}$. }
\end{figure}

\begin{prop}
The threshold sequence of $T_{4,1}$ is $\tau(T_{4,1})=(0,3,8,\infty)$.\end{prop}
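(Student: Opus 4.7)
I would establish $\tau(T_{4,1}) = (0,3,8,\infty)$ by verifying six claims, since the final entry $\infty$ and the base $(1,0)$-winner claim are immediate: in the $(4,b)$ game the maker marks the entire parallelogram on his first turn, and in the $(1,0)$ game the breaker never moves. The remaining work is to establish the pair of bounds at each of $a=1,2,3$, namely $(1,1)$-loser, $(2,3)$-winner, $(2,4)$-loser, $(3,8)$-winner, and $(3,9)$-loser.

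The two easy loser bounds I would dispense with first. For $(3,9)$, I apply Proposition~\ref{pro:surround}: since $a=3<4=|T_{4,1}|$ and $a\delta = 3\cdot 3 = 9 \le b$, the breaker surrounds each turn's marks. For $(1,1)$, I invoke a pairing based on one of the $1$-pavings $\mathsf{T}_{1,1}$ or $\mathsf{T}_{1,2}$ of Figure~\ref{fig:TPavings}; the verification reduces to checking that every placement of the parallelogram necessarily contains some paired pair, which can be done by inspecting the three orientations.

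For the two winner bounds I would use ready-made tools. The $(2,3)$-winner claim is established directly by the proof sequence of Figure~\ref{fig:23PrSeT41}: the maker opens with two adjacent triangles, leaving four candidate pair-completions $A,B,C,D$ whose cells are sufficiently disjoint that three breaker marks cannot block all four choices. For $(3,8)$ I would apply Proposition~\ref{pro:twostep} with $a=3$ and $|T_{4,1}|=a+1=4$. The geometric content is to exhibit $k=3$ placements of the parallelogram through a single cell $x$, one extending away from $x$ in each of the three lattice directions; a short check on cell types (up vs.\ down) and distances confirms that these three parallelograms pairwise intersect only at $x$, yielding $ak = 9 > 8 = b$.

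The main obstacle is the $(2,4)$-loser claim. The strategy itself is the priority strategy of Figure~\ref{fig:24T41A}, in which each maker cell receives a response set of three triangles with two priority-one cells and one priority-two cell. The real work is the dependency analysis: for each of the three orientations of $T_{4,1}$ I would write down the dependency digraph as in Figure~\ref{fig:24T41B} by inspecting, for each pair of cells $(a,b)$ in a placement, whether $b$ lies in $R_a$ with maximal priority (giving an unconditional arrow) or with lower priority conditional on certain earlier marks (giving a conditional arrow). The delicate step is tracking how an unconditional arrow from $a$ to $b$ and a conditional arrow from $b$ to $c$ (with label implied by earlier maker marks) chain together to force a large synchronous cluster. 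In each of the three orientations the resulting arrows force at least three of the four cells to be marked in one turn, which is impossible under $a=2$, so the breaker wins. Verifying that every possible ordering of the two maker marks is covered (including the lexicographic tie-breaking illustrated in Example~\ref{exa:secondary}) is the subtle bookkeeping that drives this part of the argument.
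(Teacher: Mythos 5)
Your proposal is correct and follows essentially the same approach as the paper's proof: paving $\mathsf{T}_{1,2}$ for the $(1,1)$-loser bound, the proof sequence of Figure~\ref{fig:23PrSeT41} for $(2,3)$-winner, Proposition~\ref{pro:twostep} with $k=\delta=3$ for $(3,8)$-winner, Proposition~\ref{pro:surround} for $(3,9)$-loser, and the priority strategy of Figure~\ref{fig:24T41A} with dependency digraphs for the $(2,4)$-loser bound. The only small caution is that for the $(1,1)$-loser claim the relevant $1$-paving is $\mathsf{T}_{1,2}$ specifically (the rhombus paving $\mathsf{T}_{1,1}$ is the one used for $T_{4,2}$ and $T_{4,3}$), so the intended verification step you mention should settle on that paving.
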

\begin{proof}
The breaker strategy based on paving $\mathsf{T}_{1,2}$ shown in
Figure~\ref{fig:TPavings} makes $T_{4,1}$ a $(1,1)$-loser. It
is easy to see that $T_{4,1}$ is a $(3,8)$-winner by Proposition~\ref{pro:twostep}
and a $(3,9)$-loser by Proposition~\ref{pro:surround}. The proof
sequence of Figure~\ref{fig:23PrSeT41} shows that $T_{4,1}$ is
a $(2,3)$-winner. 

It remains to show that $T_{4,1}$ is a $(2,4)$-loser. We are going
to show that the breaker wins following the priority strategy determined
by Figure~\ref{fig:24T41A}. Figure~\ref{fig:24T41B} shows all
possible orientations of the goal animal. For each these orientations
the dependency digraph of the cells show that cells $a$, $b$ and
$c$ must be marked in the same turn by the maker to achieve the goal
animal. The maker is not able to do so since he only has two marks
in a turn. 
\end{proof}
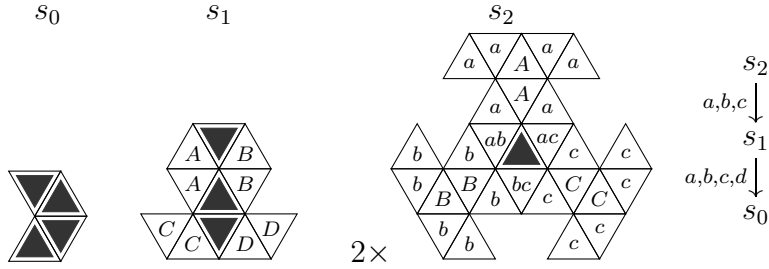
\begin{figure}
\renewcommand\trrad{.4}

\begin{tabular}{ccccc}
$s_{0}$ &  & $s_{1}$ &  & $s_{2}$\tabularnewline
\begin{tikzpicture}
\trUM{0}{0}{}{}{}{}
\trDM{0}{0}{}{}{}{}
\trDM{-1}{1}{}{}{}{}
\trUM{0}{1}{}{}{}{}
\end{tikzpicture} &  & \begin{tikzpicture}
\trD{0}{0}{A}{}{}{}
\trD{0}{-1}{C}{}{}{}
\trU{1}{-1}{C}{}{}{}
\trD{1}{0}{B}{}{}{}
\trU{2}{-1}{D}{}{}{}
\trD{2}{-1}{D}{}{}{}
\trDM{0}{1}
\trUM{1}{0}
\trDM{1}{-1}
\trU{0}{1}{A}{}{}{}
\trU{1}{1}{B}{}{}{}
\end{tikzpicture} &  & $2\times$\begin{tikzpicture} 
\trUM{1}{1}
\trU{0}{3}{A}{}{}{}
\trD{0}{3}{a}{}{}{}
\trU{1}{3}{a}{}{}{}
\trD{-1}{3}{a}{}{}{}
\trU{-1}{3}{a}{}{}{}
\trD{0}{2}{A}{}{}{}
\trU{0}{2}{a}{}{}{}
\trU{1}{2}{a}{}{}{}
\trD{0}{1}{ab}{}{}{}
\trD{1}{1}{ac}{}{}{}

\trU{0}{0}{B}{}{}{}
\trD{0}{0}{B}{}{}{}
\trU{-1}{1}{b}{}{}{}
\trU{1}{-1}{b}{}{}{}
\trU{0}{1}{b}{}{}{}
\trU{1}{0}{b}{}{}{}
\trD{-1}{0}{b}{}{}{}
\trD{0}{-1}{b}{}{}{}

\trD{2}{0}{C}{}{}{}
\trU{3}{0}{C}{}{}{}
\trD{3}{0}{c}{}{}{}
\trU{3}{1}{c}{}{}{}
\trU{3}{-1}{c}{}{}{}
\trD{3}{-1}{c}{}{}{}
\trU{2}{0}{c}{}{}{}
\trU{2}{1}{c}{}{}{}
\trD{1}{0}{bc}{}{}{}

\end{tikzpicture}\tabularnewline
\end{tabular}\lower-0.9cm\hbox{
$\xymatrix@=5mm{
s_2 \ar[d]_{a,b,c} \\
s_1 \ar[d]_{a,b,c,d}  \\
s_0 \\
}$
}

\caption{\label{fig:23PrSeT42}A $(2,3)$ proof sequence for $T_{4,2}$. }
\end{figure}

\begin{figure}
\renewcommand\trrad{.3}

\hfill{}\subfloat[\label{fig:24T42A}]{$R_{x_{1}}=R_{x_{2}}$%
\begin{tabular}{c}
\begin{tikzpicture}
\trUMC{0}{1}{\bullet}
\trD{-1}{1}{2}{}{}{}
\trD{0}{1}{1}{}{}{}
\trD{0}{0}{1}{}{}{}
\end{tikzpicture}\tabularnewline
\begin{tikzpicture}
\trDMC{0}{0}{\bullet}
\trU{0}{0}{1}{}{}{}
\trU{1}{0}{2}{}{}{}
\trU{0}{1}{1}{}{}{}
\end{tikzpicture}\tabularnewline
\end{tabular}

}\hfill{}\subfloat[\label{fig:24T42B}]{%
\begin{tabular}{cccc}
\begin{tikzpicture}
\trU{0}{0}{d}{}{}{}
\trD{0}{0}{c}{}{}{}
\trD{-1}{1}{a}{}{}{}
\trU{0}{1}{b}{}{}{}
\end{tikzpicture}~\begin{tikzpicture}
\trU{0}{0}{a}{}{}{}
\trD{-1}{0}{b}{}{}{}
\trD{-1}{1}{d}{}{}{}
\trU{-1}{1}{c}{}{}{}
\end{tikzpicture}~\begin{tikzpicture}
\trU{0}{0}{b}{}{}{}
\trD{0}{0}{c}{}{}{}
\trD{-1}{0}{a}{}{}{}
\trU{0}{1}{d}{}{}{}
\end{tikzpicture}~\begin{tikzpicture}
\trU{0}{1}{a}{}{}{}
\trD{-1}{0}{d}{}{}{}
\trD{-1}{1}{b}{}{}{}
\trU{-1}{1}{c}{}{}{}
\end{tikzpicture} &  &  & \begin{tikzpicture}
\trU{-1}{1}{a}{}{}{}
\trD{0}{0}{d}{}{}{}
\trD{-1}{1}{b}{}{}{}
\trU{0}{1}{c}{}{}{}
\end{tikzpicture}~\begin{tikzpicture}
\trU{0}{0}{b}{}{}{}
\trD{-1}{0}{c}{}{}{}
\trD{0}{0}{a}{}{}{}
\trU{-1}{1}{d}{}{}{}
\end{tikzpicture}\tabularnewline
\xymatrix@=4mm{
a & b \ar@{<->}[r] \ar@{.>}^c[l] & c \ar@{<->}[r]  & d
} &  &  & \xymatrix@=4mm{
a \ar@{<->}[r] & b \ar@<-2pt>@{<.}_d[r] \ar@<2pt>@{.>}^a[r] & c \ar@{<->}[r]  & d
}\tabularnewline
\end{tabular}}\hfill{}

\caption{(a) A $(2,4)$ priority strategy for the breaker for $T_{4,2}$. (b)
Dependency digraph of the cells in the orientations of $T_{4,2}$. }
\end{figure}
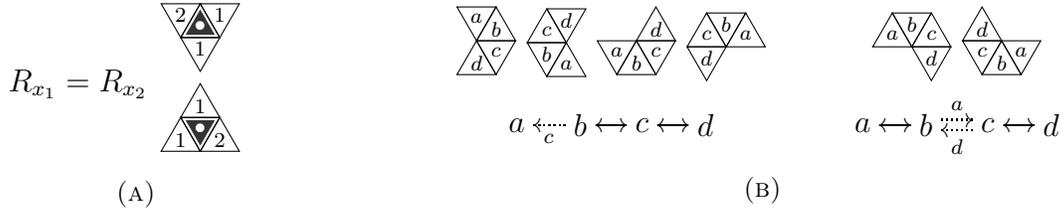

\begin{prop}
The threshold sequence of $T_{4,2}$ is $\tau(T_{4,2})=(0,3,8,\infty)$.\end{prop}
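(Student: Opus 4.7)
The plan is to establish the four values in the threshold sequence $(0,3,8,\infty)$ by pairing each with either a concrete proof sequence or a breaker strategy, mirroring the structure of the preceding $T_{4,1}$ proof. Specifically, I need to show that $T_{4,2}$ is (i) a $(1,1)$-loser, (ii) a $(2,3)$-winner, (iii) a $(2,4)$-loser, (iv) a $(3,8)$-winner, and (v) a $(3,9)$-loser; monotonicity via Lemma~\ref{lem:morea} then fills in everything else and gives $\tau(T_{4,2})(i) = \infty$ for $i\ge 4$.

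First I would dispatch the routine bounds. For the $(1,1)$-loser statement, the paving $\mathsf{T}_{1,2}$ of Figure~\ref{fig:TPavings} already separates every placement of $T_{4,2}$ (since $T_{4,2}$ contains a copy of the $T_{4,1}$-blocking pair of matched cells—this was the very paving used in the $T_{4,1}$ proof, and $T_{4,2}$ is built from the same edge-sharing pattern). For the $(3,8)$-winner statement, $T_{4,2}$ has four cells and admits the three placements sharing a single common cell $x$ obtainable by rotating around one of its interior vertices, so Proposition~\ref{pro:twostep} applies with $a=3$, $k=\delta=3$, and $ak=9>8$. The $(3,9)$-loser claim follows immediately from Proposition~\ref{pro:surround} with $a=3$ and $a\delta=9$.

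The $(2,3)$-winner claim is witnessed by the proof sequence in Figure~\ref{fig:23PrSeT42}: I would just verify that each labeled letter in $s_2$ describes a valid continuation (for each of the $3$ possible breaker response sets, the maker has at least one letter whose cells are both unmarked, reducing to $s_1$), and that each of the four letters in $s_1$ likewise reduces to $s_0$ against any three breaker marks. This is a mechanical check against the pictures in that figure.

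The main obstacle is the $(2,4)$-loser statement, and that is where I would spend most of the writing. The plan is to run the priority strategy given by Figure~\ref{fig:24T42A}, and for each of the six orientations of $T_{4,2}$ shown in Figure~\ref{fig:24T42B} read off the dependency digraph. In each of the first four orientations, the digraph $\xymatrix@1@=6mm{a & b \ar@{<->}[r] \ar@{.>}^{c}[l] & c \ar@{<->}[r] & d}$ forces $b\sim c\sim d$, so the maker must mark at least three cells in a single turn; in the remaining two orientations the enriched digraph $\xymatrix@1@=6mm{a \ar@{<->}[r] & b \ar@{<->}[r] & c \ar@{<->}[r] & d}$ (obtained by combining the conditional and secondary arrows exactly as in Example~\ref{exa:secondary}) forces $a\sim b\sim c\sim d$. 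Since the maker has only two marks per turn, no orientation of $T_{4,2}$ can be completed. The delicate part will be verifying the secondary-arrow analysis in the two right-hand orientations: I will need to handle the three sub-cases according to whether the maker marks $a$ before $d$, after $d$, or in the same turn as $d$, tracking in each case which shared response cell gets marked by the breaker first (exactly the lexicographic-tiebreak argument used in Example~\ref{exa:secondary}). Once this case analysis is laid out, the conclusion is immediate.
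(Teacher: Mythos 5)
Your overall structure matches the paper: establish the $(1,1)$-loser, $(2,3)$-winner, $(2,4)$-loser, $(3,8)$-winner, and $(3,9)$-loser claims, then fill in by monotonicity. The last three of these are handled exactly as the paper does. However, two of your steps contain genuine errors.

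First, for the $(1,1)$-loser claim you propose the paving $\mathsf{T}_{1,2}$, but the paper uses the paving $\mathsf{T}_{1,1}$ for $T_{4,2}$ (and for $T_{4,3}$), reserving $\mathsf{T}_{1,2}$ for the straight tetriamond $T_{4,1}$ only. These are different $1$-pavings chosen to suit different shapes, and your justification --- that ``$T_{4,2}$ contains a copy of the $T_{4,1}$-blocking pair of matched cells'' because it is ``built from the same edge-sharing pattern'' --- does not stand up: $T_{4,1}$ and $T_{4,2}$ are not sub/super-shapes of each other, and there is no reason a paving that separates all placements of one must separate all placements of the other. You would need to actually exhibit that every placement of $T_{4,2}$ contains a matched pair of $\mathsf{T}_{1,2}$; absent that check, this step is unjustified and quite possibly false.

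Second, in the $(2,4)$-loser step, you invoke ``secondary arrows'' and Example~\ref{exa:secondary} to analyze the last two orientations. This is a conflation with the polyomino $P_{4,3}$: Example~\ref{exa:secondary} and the secondary-arrow notation $\ar@{.>>}$ are used for Figure~\ref{fig:24P43B}, not for $T_{4,2}$. The dependency digraph in Figure~\ref{fig:24T42B} for the last two orientations of $T_{4,2}$ contains only \emph{conditional} arrows $b\ar@{.>}^{a}[r]c$ and $c\ar@{.>}^{d}[r]b$; since $a\leftrightarrow b$ and $c\leftrightarrow d$ already force $a,d$ marked whenever $b,c$ are, both conditions are always satisfied, giving $b\sim c$ and hence $a\sim b\sim c\sim d$ with no appeal to secondary-arrow order-tracking. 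Your three-sub-case analysis (mark $a$ before $d$, after $d$, or simultaneously) is unnecessary machinery here. The conclusion you reach is still correct, so this is a confusion of notation and reference rather than a fatal gap, but it should be cleaned up.
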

\begin{proof}
The breaker strategy based on the double tiling $\mathsf{T}_{1,1}$
shown in Figure~\ref{fig:TPavings} makes $T_{4,2}$ a $(1,1)$-loser.
It is easy to see that $T_{4,2}$ is a $(3,8)$-winner by Proposition~\ref{pro:twostep}
and a $(3,9)$-loser by Proposition~\ref{pro:surround}. The proof
sequence of Figure~\ref{fig:23PrSeT42} shows that $T_{4,2}$ is
a $(2,3)$-winner. 

It remains to show that $T_{4,2}$ is a $(2,4)$-loser. We are going
to show that the breaker wins following the priority strategy determined
by Figure~\ref{fig:24T42A}. Figure~\ref{fig:24T42B} shows the
possible orientations of the goal animal. In all of these orientations
the dependency digraph of the cells show that cells $b$, $c$ and
$d$ must be marked in the same turn by the maker to achieve the goal
animal. The maker is not able to do so since he only has two marks
in a turn. 
\end{proof}
\begin{figure}
\renewcommand\trrad{.3}

\hfill{}\subfloat[\label{fig:24T43A}]{$R_{x_{1}}=R_{x_{2}}$%
\begin{tabular}{c}
\begin{tikzpicture}
\trUMC{0}{1}{\bullet}
\trD{-1}{1}{1}{}{}{}
\trD{0}{1}{1}{}{}{}
\trD{0}{0}{2}{}{}{}
\end{tikzpicture}\begin{tikzpicture}
\trDMC{0}{0}{\bullet}
\trU{0}{0}{1}{}{}{}
\trU{1}{0}{1}{}{}{}
\trU{0}{1}{2}{}{}{}
\end{tikzpicture}\tabularnewline
\end{tabular}

}\hfill{}\subfloat[\label{fig:24T43B}]{\begin{tikzpicture}
\trU{0}{1}{b}{}{}{}
\trD{-1}{1}{a}{}{}{}
\trD{0}{1}{c}{}{}{}
\trD{0}{0}{d}{}{}{}
\end{tikzpicture}\begin{tikzpicture}
\trD{0}{0}{b}{}{}{}
\trU{0}{0}{a}{}{}{}
\trU{1}{0}{c}{}{}{}
\trU{0}{1}{d}{}{}{}
\end{tikzpicture}$\quad$\lower-0.9cm\hbox{$\xymatrix@=4mm{& d \\ a \ar@{<->}[r]  & b \ar@{<->}[r] \ar@{.>}[u]^a_c & c }$}}\hfill{}

\caption{(a) A $(2,4)$ priority strategy for the breaker for $T_{4,3}$. (b)
Dependency digraph of the cells in the orientations of $T_{4,3}$. }
\end{figure}
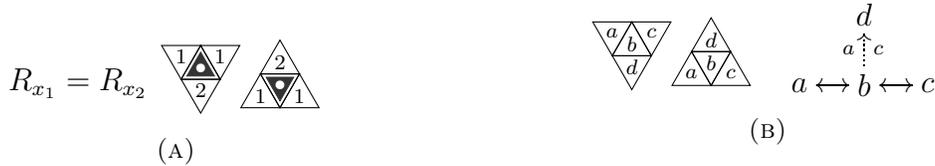

\begin{figure}
\renewcommand\trrad{.45}

\begin{tabular}{c}
\begin{tikzpicture} 
\trUMC{2}{0}{{\bf 1}}
\trUMC{1}{1}{{\bf 2}}
\trDBC{1}{0}{2}
\trUBC{1}{0}{2}
\trUMC{2}{1}{{\bf 3}}
\trUBC{1}{2}{3}
\trDBC{1}{1}{3}
\trDBC{2}{0}{3'}
\trUBC{3}{0}{3'}
\end{tikzpicture}\tabularnewline
\end{tabular}

\caption{\label{fig:23PrSeT43}A forced game play of the $(1\a2,1)$ game for
the animal $T_{4,3}$. }
\end{figure}
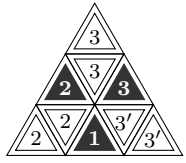

\begin{prop}
The threshold sequence of $T_{4,3}$ is $\tau(T_{4,3})=(0,3,8,\infty)$.\end{prop}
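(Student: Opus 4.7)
The plan is to establish six threshold comparisons: $T_{4,3}$ is a $(1,0)$-winner and a $(1,1)$-loser, a $(2,3)$-winner and a $(2,4)$-loser, and a $(3,8)$-winner and a $(3,9)$-loser. Together with the trivial observation that $T_{4,3}$ is a $(4,b)$-winner for every $b$ (since $|T_{4,3}|=4$, the maker marks all four cells on his first turn), Lemma~\ref{lem:morea} then pins down every entry of the threshold sequence.

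Three of these bounds are routine and parallel the corresponding arguments for $T_{4,1}$ and $T_{4,2}$. For the $(1,1)$-loser statement I would have the breaker follow the paving strategy based on $\mathsf{T}_{1,1}$ in Figure~\ref{fig:TPavings}, which in every placement pairs the central down-triangle of $T_{4,3}$ with one of its three surrounding up-triangles. Because $T_{4,3}$ has the three-fold symmetry of an equilateral triangle, any of its outer up-triangle cells serves as a pivot through which $k=\delta=3$ rotated placements pass meeting pairwise only at that cell, so Proposition~\ref{pro:twostep} with $a=3$ and $ak=9>8$ yields the $(3,8)$-winner; Proposition~\ref{pro:surround} with $a=3<4=|T_{4,3}|$ and $a\delta=9\le 9$ gives the matching $(3,9)$-loser.

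What distinguishes $T_{4,3}$ from $T_{4,1}$ and $T_{4,2}$ is the $(2,3)$-winner part, for which no short direct proof sequence appears to be available. Instead I would first show that $T_{4,3}$ is a bounded $(1\a 2, 1)$-winner by verifying the forced play shown in Figure~\ref{fig:23PrSeT43}: the maker's first two single-cell moves create a position containing two disjoint completion threats (the cells labelled $3$ and the cells labelled $3'$), and the breaker's single intervening mark in each of the preceding turns cannot destroy both, so on his two-cell last turn the maker completes whichever copy of $T_{4,3}$ survived. Applying Corollary~\ref{cor:reduce} with $a=2$, $b=3$ (so $\lfloor 3/2 \rfloor=1$) then yields the $(2,3)$-winner conclusion.

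The main obstacle will be the $(2,4)$-loser. I would have the breaker follow the priority strategy of Figure~\ref{fig:24T43A} and reproduce the dependency-digraph analysis used for $T_{4,1}$ and $T_{4,2}$. Because $T_{4,3}$ has the full symmetry of an equilateral triangle and the priority rule has an up/down reflection symmetry, only the two orientations in Figure~\ref{fig:24T43B} need to be checked; in each, the solid double arrows give $a\sim b\sim c$, and the conditional arrow out of $b$, triggered as soon as either $a$ or $c$ has been marked, forces $d$ into the same turn as $b$. Consequently the maker would need to mark all four cells in one turn, which is impossible with only two marks per turn, so the breaker wins.
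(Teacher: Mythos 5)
Your proposal follows the paper's proof essentially step by step: the double paving $\mathsf{T}_{1,1}$ for the $(1,1)$-loser, Propositions~\ref{pro:twostep} and \ref{pro:surround} for the $(3,8)$-winner and $(3,9)$-loser, the $(1\a2,1)$ forced play of Figure~\ref{fig:23PrSeT43} together with Corollary~\ref{cor:reduce} for the $(2,3)$-winner, and the priority strategy of Figure~\ref{fig:24T43A} with the dependency digraph $a\sim b\sim c$ for the $(2,4)$-loser. One small slip: the forced line in Figure~\ref{fig:23PrSeT43} has \emph{three} single-cell maker moves (not two) before the final double move, with the breaker facing the unblockable pair of threats labeled $3$ and $3'$ only after the maker's third mark; the substance of the argument is otherwise the same as the paper's.
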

\begin{proof}
The breaker strategy based on the double tiling $\mathsf{T}_{1,1}$
shown in Figure~\ref{fig:TPavings} makes $T_{4,3}$ a $(1,1)$-loser.
It is easy to see that $T_{4,3}$ is a $(3,8)$-winner by Proposition~\ref{pro:twostep}
and a $(3,9)$-loser by Proposition~\ref{pro:surround}. 

We show that the breaker wins the $(2,4)$ game following the priority
strategy determined by Figure~\ref{fig:24T43A}. Figure~\ref{fig:24T43B}
shows the two possible orientations of the goal animal. For both of
these orientations the dependency digraph of the cells show that cells
$a$, $b$ and $c$ must be marked in the same turn by the maker to
achieve the goal animal. The maker is not able to do so since he only
has two marks in a turn. 

It remains to shows that $T_{4,3}$ is a $(2,3)$-winner. We are going
verify that $T_{4,3}$ is a $(1\a2,1)$-winner and use Corollary~\ref{cor:reduce}.
No matter how the breaker picks her first mark, the maker can force
some rotation of the game shown in Figure~\ref{fig:23PrSeT43}. The
breaker has to mark one of the cells labeled with 2 in the second
turn otherwise the maker wins in the third turn. The breaker has to
mark one of the cells labeled with 3 and one of the cells labeled
with 3' in the third turn. This is impossible so the maker wins in
the fourth turn. 
\end{proof}

\section{\label{sec:Polyominoes}Polyominoes}

\selectlanguage{english}%
\usetikzlibrary{patterns}
\newcommand\sq[2]{
 \draw (#1-0.5,#2-0.5) rectangle +(1,1);
}

\newcommand\sqt[2]{
 \draw[line width=1.1pt] (#1-0.5,#2-0.5) rectangle +(1,1);
}

\newcommand\nmaker[3]{
 \draw (#1-0.2,#2-0.2) -- (#1+0.2,#2+0.2); 
 \draw (#1-0.2,#2+0.2) -- (#1+0.2,#2-0.2);
 \draw (#1,#2+0.2) -- (#1,#2-0.2);
 \draw (#1+0.2,#2) -- (#1-0.2,#2);
 \node at (#1+0.35,#2-0.35) {$\scriptstyle #3$};
 \sq{#1}{#2}
}

\newcommand\shadeeven[2]{
 \path[pattern color=black!30,pattern=north east lines] (#1-0.5,#2-0.5) rectangle +(1,1);
}

\newcommand\maker[3]{
 \draw[fill,color=black!80] (#1-0.35,#2-0.35) rectangle +(0.7,0.7);
 \node[color=white] at (#1,#2) {$\scriptstyle #3$};
 \sq{#1}{#2}
}

\newcommand\obreaker[2]{
 \draw (#1-0.35,#2-0.35) rectangle (#1+0.35,#2+0.35); 
 \sq{#1}{#2}
}

\newcommand\breaker[3]{
 \draw (#1-0.35,#2-0.35) rectangle (#1+0.35,#2+0.35); 
 \node at (#1,#2) {$\scriptstyle #3$};
 \sq{#1}{#2} 
}

\newcommand\cell[3]{
 \node at (#1+0.35,#2-0.35) {$\scriptstyle #3$};
 \sq{#1}{#2} 
}

\newcommand\midcell[3]{
\node at (#1, #2) {$\scriptstyle #3$};
\sq{#1}{#2}; }

\newcommand\cellm[3]{
 \node (A) at (#1, #2) {#3};
 \sq{#1}{#2}
}

\newcommand\ccell[6]{  
\node at (#1-0.25, #2+0.25) {$\scriptstyle #3$};  
\node at (#1+0.25, #2+0.25) {$\scriptstyle #4$};  
\node at (#1+0.25, #2-0.25) {$\scriptstyle #5$};  
\node at (#1-0.25, #2-0.25) {$\scriptstyle #6$};  
\sq{#1}{#2}; } 

\selectlanguage{american}%

\begin{figure}
\hfill{}\subfloat[\label{fig:winners}]{%
\begin{tabular}{ccc}
\begin{tikzpicture}[scale=.4]
\cell{1}{1}{};
\cell{2}{1}{};
\cell{3}{1}{};
\cell{4}{1}{};
\cell{4}{2}{};
\end{tikzpicture} & \begin{tikzpicture}[scale=.4]
\cell{1}{1}{};
\cell{2}{1}{};
\cell{3}{1}{};
\cell{4}{1}{};
\cell{3}{2}{};
\end{tikzpicture} & \begin{tikzpicture}[scale=.4]
\cell{1}{1}{};
\cell{2}{1}{};
\cell{3}{1}{};
\cell{3}{2}{};
\cell{4}{2}{};
\end{tikzpicture}\tabularnewline
$L$ & $Y$ & $Z$\tabularnewline
\end{tabular}

}\hfill{}\subfloat[\label{fig:Snaky}]{%
\begin{tabular}{c}
\begin{tikzpicture}[scale=.4]
\cell{1}{1}{};
\cell{2}{1}{};
\cell{3}{1}{};
\cell{4}{1}{};
\cell{4}{2}{};
\cell{5}{2}{};
\end{tikzpicture}\tabularnewline
Snaky\tabularnewline
\end{tabular}}\hfill{}

\caption{(a) Unbiased winners. (b) The only undecided polyomino.}
\end{figure}
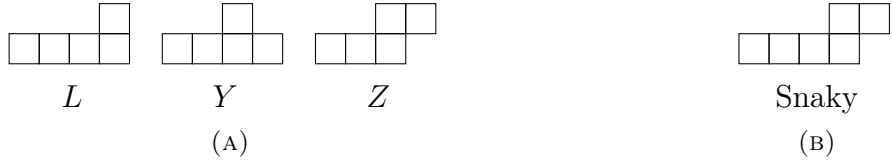

\begin{figure}
\begin{tabular}{c|cccccc}
 & \begin{tikzpicture}[scale=.4,]
\cell{1}{1}{};
\end{tikzpicture} &  & \begin{tikzpicture}[scale=.4,]
\cell{1}{1}{};
\cell{1}{2}{};
\end{tikzpicture} &  & \begin{tikzpicture}[scale=.4,]
\cell{1}{1}{};
\cell{1}{2}{};
\cell{1}{3}{};
\end{tikzpicture} & \begin{tikzpicture}[scale=.4,]
\cell{1}{1}{};
\cell{1}{2}{};
\cell{2}{2}{};
\end{tikzpicture}\tabularnewline
$A$ & $P_{1,1}$ &  & $P_{2,1}$ &  & $P_{3,1}$ & $P_{3,2}$\tabularnewline
\hline 
$\tau(A)$ & $(\infty)$ &  & $(3,\infty)$ &  & $(1,7,\infty)$ & $(1,7,\infty)$\tabularnewline
\end{tabular}

\bigskip{}

\begin{tabular}{c|ccccc}
 & \begin{tikzpicture}[scale=.4]
\cell{1}{1}{};
\cell{1}{2}{};
\cell{1}{3}{};
\cell{1}{4}{};
\end{tikzpicture} & \begin{tikzpicture}[scale=.4]
\cell{2}{1}{};
\cell{1}{1}{};
\cell{1}{2}{};
\cell{1}{3}{};
\end{tikzpicture} & \begin{tikzpicture}[scale=.4]
\cell{1}{1}{};
\cell{1}{2}{};
\cell{2}{2}{};
\cell{1}{3}{};
\end{tikzpicture} & \begin{tikzpicture}[scale=.4]
\cell{1}{1}{};
\cell{1}{2}{};
\cell{2}{1}{};
\cell{2}{2}{};
\end{tikzpicture} & \begin{tikzpicture}[scale=.4]
\cell{1}{1}{};
\cell{1}{2}{};
\cell{2}{3}{};
\cell{2}{2}{};
\end{tikzpicture}\tabularnewline
$A$ & $P_{4,1}$ & $P_{4,2}$ & $P_{4,3}$ & $P_{4,4}$ & $P_{4,5}$\tabularnewline
\hline 
$\tau(A)$ & $(1,3,11,\infty)$ & $(1,5,11,\infty)$ & $(1,3,11,\infty)$ & $(0,3,5,\infty)$ & $(1,3,11,\infty)$\tabularnewline
\end{tabular}\caption{\label{fig:polyominoes}Polyominoes and their threshold sequences
up to size four.}
\end{figure}

\begin{figure}
\begin{tabular}{ccccc}
\begin{tikzpicture}[scale=0.4,line cap=round,line join=round,>=triangle 45,x=1.0cm,y=1.0cm]
\draw (0,0)-- (2,0);
\draw (0,1)-- (2,1);
\draw (0,2)-- (2,2);
\draw (0,3)-- (2,3);
\draw (0,3)-- (0,0);
\draw (1,0)-- (1,3);
\draw (2,3)-- (2,0);
\draw [line width=1.6pt,color=ffqqww] (0.5,0.3)-- (0.5,-0.3);
\draw [line width=1.6pt,color=ffqqww] (1.5,1.3)-- (1.5,0.7);
\draw [line width=1.6pt,color=ffqqww] (0.5,2.3)-- (0.5,1.7);
\draw [line width=1.6pt,color=ffqqww] (1.5,3.3)-- (1.5,2.7);
\end{tikzpicture} &  & \begin{tikzpicture}[scale=0.4,line cap=round,line join=round,>=triangle 45,x=1.0cm,y=1.0cm]
\draw (0,0)-- (2,0);
\draw (0,1)-- (2,1);
\draw (0,2)-- (2,2);
\draw (0,3)-- (2,3);
\draw (0,3)-- (0,0);
\draw (1,0)-- (1,3);
\draw (2,3)-- (2,0);
\draw [line width=1.6pt,color=ffqqww] (0.5,0.3)-- (0.5,-0.3);
\draw [line width=1.6pt,color=ffqqww] (1.5,0.3)-- (1.5,-0.3);
\draw [line width=1.6pt,color=ffqqww] (1.5,1.3)-- (1.5,0.7);
\draw [line width=1.6pt,color=ffqqww] (1.5,2.3)-- (1.5,1.7);
\draw [line width=1.6pt,color=ffqqww] (0.5,1.3)-- (0.5,0.7);
\draw [line width=1.6pt,color=ffqqww] (0.5,2.3)-- (0.5,1.7);
\draw [line width=1.6pt,color=ffqqww] (0.5,3.3)-- (0.5,2.7);
\draw [line width=1.6pt,color=ffqqww] (1.5,3.3)-- (1.5,2.7);
\end{tikzpicture} &  & \begin{tikzpicture}[scale=0.4,line cap=round,line join=round,>=triangle 45,x=1.0cm,y=1.0cm]
\draw (0,0)-- (4,0);
\draw (0,1)-- (4,1);
\draw (0,2)-- (4,2);
\draw (0,3)-- (4,3);
\draw (0,4)-- (4,4);
\draw (0,0)-- (0,4);
\draw (1,0)-- (1,4);
\draw (2,0)-- (2,4);
\draw (3,0)-- (3,4);
\draw (4,0)-- (4,4);
\draw [line width=1.6pt,color=ffqqww] (0.5,0.3)-- (0.5,-0.3);
\draw [line width=1.6pt,color=ffqqww] (1.5,0.3)-- (1.5,-0.3);
\draw [line width=1.6pt,color=ffqqww] (2.5,0.3)-- (2.5,-0.3);
\draw [line width=1.6pt,color=ffqqww] (3.5,0.3)-- (3.5,-0.3);
\draw [line width=1.6pt,color=ffqqww] (1.5,2.3)-- (1.5,1.7);
\draw [line width=1.6pt,color=ffqqww] (0.5,2.3)-- (0.5,1.7);
\draw [line width=1.6pt,color=ffqqww] (2.5,2.3)-- (2.5,1.7);
\draw [line width=1.6pt,color=ffqqww] (3.5,1.7)-- (3.5,2.3);
\draw [line width=1.6pt,color=ffqqww] (1.5,4.3)-- (1.5,3.7);
\draw [line width=1.6pt,color=ffqqww] (0.5,4.3)-- (0.5,3.7);
\draw [line width=1.6pt,color=ffqqww] (2.5,4.3)-- (2.5,3.7);
\draw [line width=1.6pt,color=ffqqww] (3.5,4.3)-- (3.5,3.7);

\draw [line width=1.6pt,color=ffqqww] (-0.3,0.5)-- (0.3,0.5);
\draw [line width=1.6pt,color=ffqqww] (-0.3,1.5)-- (0.3,1.5);
\draw [line width=1.6pt,color=ffqqww] (-0.3,2.5)-- (0.3,2.5);
\draw [line width=1.6pt,color=ffqqww] (-0.3,3.5)-- (0.3,3.5);
\draw [line width=1.6pt,color=ffqqww] (1.7,0.5)-- (2.3,0.5);
\draw [line width=1.6pt,color=ffqqww] (1.7,1.5)-- (2.3,1.5);
\draw [line width=1.6pt,color=ffqqww] (1.7,2.5)-- (2.3,2.5);
\draw [line width=1.6pt,color=ffqqww] (1.7,3.5)-- (2.3,3.5);
\draw [line width=1.6pt,color=ffqqww] (3.7,0.5)-- (4.3,0.5);
\draw [line width=1.6pt,color=ffqqww] (3.7,1.5)-- (4.3,1.5);
\draw [line width=1.6pt,color=ffqqww] (3.7,2.5)-- (4.3,2.5);
\draw [line width=1.6pt,color=ffqqww] (3.7,3.5)-- (4.3,3.5);

\end{tikzpicture}\tabularnewline
$\mathsf{T}_{1,1}$ &  & $\mathsf{T}_{2,1}$ &  & $\mathsf{T}_{2,2}$\tabularnewline
\end{tabular}\foreignlanguage{english}{}

\caption{\label{fig:Pavings}Rectangular pavings.}
\end{figure}
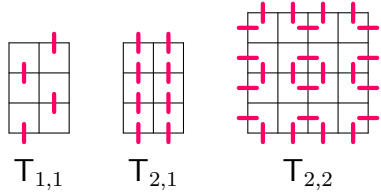

Polyomino achievement games were invented by Harary. The first proof
sequences for the unbiased games appeared in \cite{bode.harborth:hexagonal}.
Every known $(1,1)$-winner is a subset of one of the winning polyominoes
shown in Figure~\ref{fig:winners}. The only undecided \cite{AchievingSnaky,harary:is,harborth.seemann:snaky*1,harborth.seemann:snaky,sieben:snaky,prooftree}
polyomino Snaky, shown in Figure~\ref{fig:Snaky}, is conjectured
\cite{ww3} to be a winner. Biased $(1,2)$ polyomino set games were
studied in \cite{sieben:fisher,sieben:wild}.

In this section, we find the threshold sequences of polyominoes up
to size four. The results are summarized in Figure~\ref{fig:polyominoes}.
The proof of the following result is an easy exercise.
\begin{thm}
The threshold sequences for $P_{1,1}$ and $P_{2,1}$ are $\tau(P_{1,1})=(\infty)$
and $\tau(P_{2,1})=(3,\infty)$.
\end{thm}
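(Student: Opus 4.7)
The plan is to handle the two polyominoes separately and use the basic tools established earlier (Proposition~\ref{pro:surround} and a short proof sequence).

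For $P_{1,1}$, the maker wins the $(1,b)$ game for every $b \in \mathbf{W}$ by the trivial one-move strategy: on his first turn he marks any unmarked cell, which already constitutes a copy of the goal. Hence $\tau(P_{1,1})=(\infty)$.

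For $P_{2,1}$, I would argue in three parts. First, $(2,b)$: for any $b$ the maker marks two adjacent cells in his first turn, so $P_{2,1}$ is a $(2,b)$-winner for every $b$, giving $\tau(P_{2,1})(n)=\infty$ for $n\ge 2$. Second, $(1,3)$: on his first turn the maker marks any cell $x$. Its site-perimeter $E(\{x\})$ has $\delta=4$ cells on the square board, and the breaker has only $b=3$ marks, so at least one neighbor $y$ remains unmarked; the maker marks $y$ and completes a domino. This can be written as a two-situation proof sequence in the style of Figure~\ref{fig:SchProofSeqA}. Third, $(1,4)$: here $a=1<2=|P_{2,1}|$ and $a\delta=1\cdot 4=4\le b=4$, so Proposition~\ref{pro:surround} directly yields that $P_{2,1}$ is a $(1,4)$-loser. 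Combining, $\tau(P_{2,1})=(3,\infty)$.

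There is no real obstacle here: each case is a one-line application of a result already proved or a trivial first-move strategy. The only thing worth double-checking is that $\delta=4$ for the square lattice (recorded in the paragraph preceding Proposition~\ref{pro:surround}) so that the numerical hypothesis $a\delta\le b$ matches $b=4$ exactly and delivers the sharp threshold $b_1=3$.
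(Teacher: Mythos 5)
Your proposal is correct and supplies exactly the "easy exercise" the paper omits: the monomino is won in one move, the domino is a $(1,3)$-winner because $\delta=4>3$ leaves a free neighbor after the breaker's reply, a $(1,4)$-loser by Proposition~\ref{pro:surround} with $a\delta=4\le b$, and trivially a $(2,b)$-winner for all $b$. There is no paper proof to compare against, and your argument is the natural one.
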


\begin{thm}
The threshold sequence for $P_{3,1}$ and $P_{3,2}$ is $\tau(P_{3,1})=\tau(P_{3,2})=(1,7,\infty)$. \end{thm}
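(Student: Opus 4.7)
The claim splits into five parts: $\tau(\cdot)(3)=\infty$, the $(1,1)$-winner and $(1,2)$-loser statements, and the $(2,7)$-winner and $(2,8)$-loser statements. The value $\tau(\cdot)(3)=\infty$ is immediate since $|P_{3,i}|=3$ and the maker completes a copy with his three marks in the very first move when $a=3$. For the $(2,8)$-loser half, I would invoke Proposition~\ref{pro:surround} directly: on the rectangular lattice $\delta=4$, so $a\delta=8\le b$ and $a=2<3=|P_{3,i}|$, and the proposition applies to both polyominoes at once.

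For the $(2,7)$-winner half, I plan to apply Proposition~\ref{pro:twostep} with $a=2$, $b=7$, and $k=4$; since $|P_{3,i}|=a+1$ and $ak=8>7$, it suffices to produce four placements pairwise meeting at a single cell $x$. For $P_{3,1}$, the four collinear placements extending from $x$ in the four cardinal directions have pairwise intersection $\{x\}$. For $P_{3,2}$, I would take the four rotated L-placements with $x$ as an endpoint, each turning in a different direction (east-then-north, north-then-west, west-then-south, south-then-east); the ``elbow'' cell and far endpoint of each placement lie in a distinct quadrant relative to $x$, so again the pairwise intersection is exactly $\{x\}$.

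The $(1,1)$-winner half I would handle by exhibiting short proof sequences in the sense of Section~\ref{sec:Preliminaries}. Both triominoes are classical unbiased winners, and a sequence of two or three situations suffices: working backwards from $s_0$, at each step I add a neighborhood whose branching provides enough continuations that the breaker's single mark cannot kill them all. For $P_{3,1}$ one can essentially reuse the idea of Figure~\ref{fig:proofSeq11T32}, and for $P_{3,2}$ an analogous short sequence works by threatening the two possible completions of the bent shape from the maker's first mark.

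The main obstacle is the $(1,2)$-loser half, which requires explicit $2$-pavings of the square lattice meeting every placement of the goal polyomino. For $P_{3,1}$, the superposition of the horizontal pairing $(2k,y)\sim(2k{+}1,y)$ with the vertical pairing $(x,2k)\sim(x,2k{+}1)$ yields a $2$-paving in which every $1\times 3$ strip, horizontal or vertical, contains a paired pair, and the paving strategy of Section~\ref{sec:Preliminaries} then forces a breaker win. For $P_{3,2}$, I would exhibit a $2$-paving of small period (at most $4\times 4$) whose pairs are chosen so that each of the four rotational orientations of the L, placed in each residue class of the period, contains a paired pair; the verification is a finite but case-by-case check that I expect to require the most care, and both pavings can be displayed in the style of Figure~\ref{fig:Pavings}.
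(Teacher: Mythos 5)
Your decomposition into the five subclaims and your handling of four of them are essentially the paper's. The $(2,7)$-winner and $(2,8)$-loser arguments via Propositions~\ref{pro:twostep} and~\ref{pro:surround} are exactly what the paper does, and your $k=4$ placement families for both trominoes are correct. Your $P_{3,1}$ paving (the superposition of the horizontal brick pairing with the vertical brick pairing) is the same 2-paving the paper uses under the name $\mathsf{T}_{2,2}$.

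Two places diverge. For the $(1,1)$-winner step, you propose building fresh proof sequences, whereas the paper simply observes that $P_{3,1}$ and $P_{3,2}$ are subsets of the known unbiased winner $L$ and applies the monotonicity remark that precedes it. Your route works but is more labor; the paper's containment argument is free once $L$ is on the table. The real gap is the $(1,2)$-loser step for $P_{3,2}$: you assert that a suitable 2-paving of small period exists and defer the ``case-by-case check,'' but you never produce it, so this part is not a proof. Note also that you actually overestimate the difficulty here: the required paving is simpler than the one for $P_{3,1}$. Pair every cell with the cells directly above and below it (the paper's $\mathsf{T}_{2,1}$). That is a 2-paving, and every placement of $P_{3,2}$, being a bent tromino, contains a vertical domino, hence a pair. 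So the verification is immediate and has no cases at all; the argument you flag as the hardest is in fact the easiest of the five. Incidentally, the combined horizontal-and-vertical paving you use for $P_{3,1}$ does \emph{not} work for $P_{3,2}$: a placement with both the odd-offset vertical domino and the odd-offset horizontal domino misses every pair, so the two trominoes genuinely need different pavings, as the paper's use of $\mathsf{T}_{2,2}$ versus $\mathsf{T}_{2,1}$ reflects.
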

\begin{proof}
Polyominoes $P_{3,1}$ and $P_{3,2}$ are $(1,1)$-winners since they
are subsets of the winner $L$ shown in Figure~\ref{fig:winners}.
The breaker wins the $(1,2)$ games for $P_{3,1}$ and $P_{3,2}$
using the strategy based on the double paving $\mathsf{T}_{2,2}$
and $\mathsf{T}_{2,1}$ respectively. It is easy to see that $P_{3,1}$
and $P_{3,2}$ are $(2,7)$-winners using Proposition~\ref{pro:twostep}
and $(2,8)$-losers using Proposition~\ref{pro:surround}.
\end{proof}
Now we consider the size 4 animals.
\begin{lem}
Animals $P_{4,1}$, $P_{4,2}$, $P_{4,3}$ and $P_{4,5}$ are $(3,11)$-winners
and $(3,12)$-losers.\end{lem}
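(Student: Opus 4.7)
The plan is to handle each direction by a single application of a proposition already established in Section~\ref{sec:Preliminaries}. For the $(3,12)$-loser half I apply Proposition~\ref{pro:surround}, and for the $(3,11)$-winner half I apply Proposition~\ref{pro:twostep}.

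The loser direction is immediate. Each of the four animals has $|A|=4$, so $a=3<|A|$, and the rectangular board has $\delta=4$, giving $a\delta=12\le 12=b$. Proposition~\ref{pro:surround} then concludes that each of $P_{4,1}, P_{4,2}, P_{4,3}, P_{4,5}$ is a $(3,12)$-loser.

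For the winner direction, I use Proposition~\ref{pro:twostep} with $a=3$, $|A|=4=a+1$, and $k=4$, so that $ak=12>11=b$. This reduces the problem to exhibiting, for each of the four animals, a cell $x$ together with four placements $A_1,A_2,A_3,A_4$ whose pairwise intersection equals $\{x\}$. My uniform construction is to pick a \emph{leaf} of the animal (a cell with exactly one neighbor in the animal), place that leaf at $x$, and take $A_1,\ldots,A_4$ to be the four images of the animal under the rotations by $0^\circ, 90^\circ, 180^\circ, 270^\circ$ about $x$.

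The only remaining work is the case-by-case verification that the four rotated copies pairwise share no cell other than $x$. For $P_{4,1}$ this is obvious: the four placements are I-pieces extending along the four half-axes out of $x$, so they only meet at $x$. For $P_{4,2}$, $P_{4,3}$, and $P_{4,5}$, choosing the leaf carefully---an end of the long arm of the L, the end of the vertical bar of the T opposite the bump, or an end of the S-chain---places the three non-$x$ cells of each placement in a region that does not meet the corresponding region of any other rotation, which is confirmed by drawing the four configurations. No step here is genuinely difficult; the only subtle point is the choice of leaf, and it is precisely this choice that fails for $P_{4,4}$ (the $2\times 2$ square has no leaf, and any two placements through a common cell must share an edge), consistent with the strictly smaller threshold of $P_{4,4}$ shown in Figure~\ref{fig:polyominoes}.
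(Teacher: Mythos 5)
Your proof is correct and follows the same two-proposition approach the paper uses: the paper's own proof is a one-liner citing Propositions~\ref{pro:twostep} and \ref{pro:surround}, and your choice of $k=4$ placements as the four rotations about a suitable leaf is exactly the intended instantiation (you are right that the leaf must be chosen with care --- for $P_{4,3}$ the middle bump fails while either bar end works). One very minor slip in the $P_{4,4}$ aside: two diagonally opposite placements of the square through a common corner do meet in only that corner, so $k=2$ is achievable; the real obstruction is that $2\cdot 3=6\le 11$, so Proposition~\ref{pro:twostep} still cannot reach $b=11$ for the square.
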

\begin{proof}
The result follows easily from Proposition~\ref{pro:twostep} and
Proposition~\ref{pro:surround}.
\end{proof}
Note that $P_{4,4}$ is missing from the lemma since Proposition~\ref{pro:twostep}
does not apply for this animal.

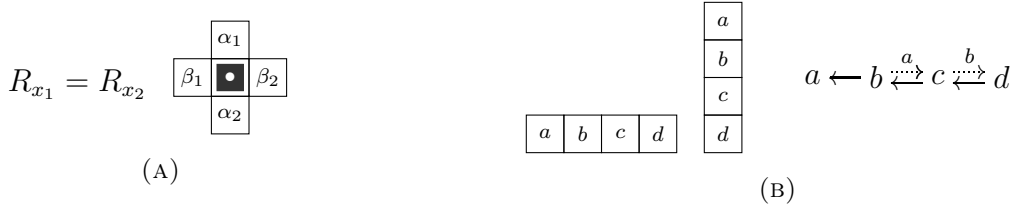
\begin{figure}
\hfill{}\subfloat[\label{fig:24P41A}]{$R_{x_1}=R_{x_2}$
\begin{tabular}{c}
\begin{tikzpicture} [scale=0.5]
\midcell{2}{3}{\alpha_1}
\midcell{1}{2}{\beta_1}
\midcell{2}{1}{\alpha_2}
\midcell{3}{2}{\beta_2}
\maker{2}{2}{\bullet}
\end{tikzpicture}\tabularnewline
\end{tabular}

}\hfill{}\subfloat[\label{fig:24P41B}]{%
\begin{tabular}{cccc}
\begin{tikzpicture} [scale=0.5]
\midcell{1}{1}{a}
\midcell{2}{1}{b}
\midcell{3}{1}{c}
\midcell{4}{1}{d}
\end{tikzpicture} & \begin{tikzpicture} [scale=0.5]
\midcell{1}{1}{d}
\midcell{1}{2}{c}
\midcell{1}{3}{b}
\midcell{1}{4}{a}
\end{tikzpicture} &  & \lower-0.9cm\hbox{$\xymatrix@=4mm{
a  & b \ar[l] \ar@<2pt>@{.>}[r] \ar@<2pt>@{.>}[r]^{a} & c \ar@<2pt>[l] \ar@<2pt>@{.>}[r]^{b} & d \ar@<2pt>[l]   
}$}\tabularnewline
\end{tabular}

}\hfill{}

\caption{(a) A $(2,4)$ priority strategy for the breaker for $P_{4,1}$ (b)
Dependency digraph of the cells in the orientations of $P_{4,1}$. }
\end{figure}

\begin{prop}
The threshold sequence of $P_{4,1}$ is $\tau(P_{4,1})=(1,3,11,\infty)$.\end{prop}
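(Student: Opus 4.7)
The proof splits into four independent tasks corresponding to the four relevant entries of the threshold sequence; the $(3,11)$-winner and $(3,12)$-loser statements are already handled by the preceding lemma, and $(n,b)$-winners for $n\ge 4$ are automatic. The plan is to prove, in order: $P_{4,1}$ is a $(1,1)$-winner, a $(1,2)$-loser, a $(2,3)$-winner, and a $(2,4)$-loser. Lemma~\ref{lem:morea} then fills in the rest of the sequence.

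For the $(1,1)$-winner claim, I would simply observe that $P_{4,1}$ is a subset of the known unbiased winner $L$ from Figure~\ref{fig:winners}, so by the subset monotonicity proposition it is itself a $(1,1)$-winner. For the $(1,2)$-loser claim, I would exhibit a $2$-paving of the rectangular board every placement of which contains a pair of paired cells covering $P_{4,1}$; the tiling $\mathsf{T}_{1,1}$ in Figure~\ref{fig:Pavings} can be checked directly against each of the two orientations of $P_{4,1}$, and the paving strategy then wins for the breaker.

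For the $(2,3)$-winner claim, I would apply Corollary~\ref{cor:reduce} and verify instead that $P_{4,1}$ is a $(1\a 2,1)$-winner. A small proof sequence starting from the goal $P_{4,1}$, with previous situations consisting of triples of collinear cells in a row extended by appropriate ``branching'' neighborhoods (similar in spirit to Figure~\ref{fig:23PrSeT41} for the triangular case), should suffice; the key point is that since the breaker has only one mark per turn, the maker can play two cells per stage extending a row of his marks in such a way that the breaker is forced to commit to one side.

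The main obstacle, and the most delicate step, is the $(2,4)$-loser claim. Here I would use the priority strategy of Figure~\ref{fig:24P41A}: each maker cell $x_i$ has four response cells — the two collinear neighbors (priorities $\alpha_1,\alpha_2$) and the two perpendicular neighbors (priorities $\beta_1,\beta_2$). Since there is essentially only one orientation of $P_{4,1}$ up to $90^{\circ}$ rotation, I consider cells $a,b,c,d$ along a row and build the dependency digraph shown in Figure~\ref{fig:24P41B}. The unconditional arrow $b\to a$ arises because $a$ is an $\alpha$-priority response to $b$, and the conditional arrows $b\rightdasharrow c$ (conditional on $a$) and $c\rightdasharrow d$ (conditional on $b$) arise because after $a$ is marked the remaining $\alpha$-cell of $R_b$ is $c$, and similarly with $c$ in place of $b$. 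This forces $b\sim c\sim d$ and $a\le b$, so the maker would need to mark at least three cells in one turn — impossible with two marks. The hard part is carefully checking that the priorities transfer through the dependency arrows exactly as claimed in every possible order in which the maker might play two collinear or two perpendicular cells in a single turn; once that bookkeeping is done, the impossibility of winning in two-per-turn marks is immediate.
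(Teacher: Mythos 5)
Your decomposition into the four claims $(1,1)$-winner, $(1,2)$-loser, $(2,3)$-winner, $(2,4)$-loser is exactly the right skeleton (and, in fact, more explicit than the paper, which forgets to mention the $(1,2)$-loser step at all — it follows because $P_{4,1}$ contains $P_{3,1}$). Three of your four steps match the paper's argument: subset of $L$ for $(1,1)$, Corollary~\ref{cor:reduce} via a $(1\a 2,1)$ strategy for $(2,3)$ (the paper simply observes that a $(1,1)$-winner is automatically a $(1\a 2,1)$-winner, so you do not need a fresh proof sequence), and the priority strategy of Figure~\ref{fig:24P41A} with dependency digraph for $(2,4)$.

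The genuine error is your $(1,2)$-loser argument. You propose to use $\mathsf{T}_{1,1}$ and call it a $2$-paving. It is not: $\mathsf{T}_{1,1}$ is the brick-offset \emph{$1$-paving}, which the paper uses to show the square $P_{4,4}$ is a $(1,1)$-loser. Worse, $\mathsf{T}_{1,1}$ does not even hit all placements of $P_{4,1}$: a horizontal $1\times 4$ placement such as $(0,0),(1,0),(2,0),(3,0)$ contains no $\mathsf{T}_{1,1}$-related pair — the brick offset pairs $(0,0)$ with $(0,-1)$, $(1,0)$ with $(1,1)$, and so on, all outside the placement. (This is as it must be, since $P_{4,1}$ \emph{is} a $(1,1)$-winner, so no $1$-paving can cover it.) To establish the $(1,2)$-loser, either invoke the subset monotonicity proposition with $P_{3,1}\subset P_{4,1}$ and $\tau(P_{3,1})(1)=1$, or directly use a $2$-paving such as $\mathsf{T}_{2,2}$, every placement of a $3$-in-a-row segment of which already contains a pair.

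A smaller point on the $(2,4)$-loser step: you describe the response set as ``two collinear neighbors ($\alpha_1,\alpha_2$) and two perpendicular neighbors ($\beta_1,\beta_2$),'' but Figure~\ref{fig:24P41A} fixes $\alpha$ for the vertical neighbors and $\beta$ for the horizontal ones, independent of the orientation of the goal. Since the priority strategy is not rotation-invariant, one must check the horizontal and vertical placements separately against this fixed assignment; you cannot appeal to ``essentially one orientation.'' As it happens, the resulting dependency digraph is identical in both cases (the roles of $\alpha$ and $\beta$ simply trade places), so your conclusion $b\sim c\sim d$, $a\le b$ is correct, but the argument should make the two-case check explicit rather than declaring it by symmetry.
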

\begin{proof}
Polyomino $P_{4,1}$ is a subset of the winner $L$ shown in Figure~\ref{fig:winners}
so the maker wins the $(1,1)$ game and therefore the $(2,3)$ game
by Corollary~\ref{cor:reduce}. 

We show that the breaker wins the $(2,4)$ game following the priority
strategy determined by Figure~\ref{fig:24P41A}. The breaker uses
priorities $(\alpha_{1},\alpha_{2})$ during her first mark in the
response set and priorities $(\beta_{1},\beta_{2})$ during her second
mark in the response set. 

Figure~\ref{fig:24P41B} shows the dependency digraph of the cells
of the goal animal in both orientations. It is clear from the digraph
that cells $b$, $c$ and $d$ must be marked in the same turn by
the maker to achieve the goal animal. The maker is not able to do
so since he only has two marks in a turn.
\end{proof}
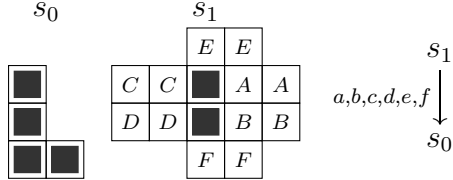
\begin{figure}
\begin{tabular}{cc}
$s_{0}$ & $s_{1}$\tabularnewline
\begin{tikzpicture} [scale=0.5]
\maker{1}{1}{}
\maker{1}{2}{}
\maker{1}{3}{}
\maker{2}{1}{}
\end{tikzpicture} & \begin{tikzpicture} [scale=0.5]
\maker{3}{2}{}
\maker{3}{3}{}
\midcell{1}{2}{D}
\midcell{1}{3}{C}
\midcell{2}{2}{D}
\midcell{2}{3}{C}
\midcell{3}{1}{F}
\midcell{3}{4}{E}
\midcell{4}{1}{F}
\midcell{4}{2}{B}
\midcell{4}{3}{A}
\midcell{4}{4}{E}
\midcell{5}{2}{B}
\midcell{5}{3}{A}
\end{tikzpicture}\tabularnewline
\end{tabular}\lower-0.5cm\hbox{
$\xymatrix@=7mm{
s_1 \ar[d]_{a,b,c,d,e,f} \\
s_0 \\
}$
}

\caption{\label{fig:P_4_2 w (2,5)}A proof sequence of the maker strategy for
$(2,5)$-achieving $P_{4,2}$. }
\end{figure}

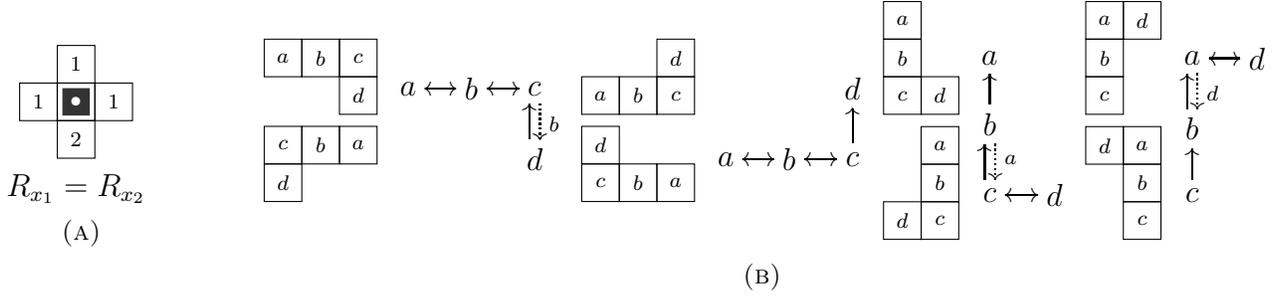
\begin{figure}
\hfill{}\subfloat[\label{fig:26P42A}]{%
\begin{tabular}{c}
\begin{tikzpicture} [scale=0.5]
\midcell{2}{3}{1}
\midcell{1}{2}{1}
\midcell{2}{1}{2}
\midcell{3}{2}{1}
\maker{2}{2}{\bullet}
\end{tikzpicture}\tabularnewline
$R_{x_1}=R_{x_2}$
\tabularnewline
\end{tabular}

}\hfill{}\subfloat[\label{fig:26P42B}]{

\begin{tabular}{c}
\begin{tikzpicture} [scale=0.5]
\midcell{1}{1}{a}
\midcell{2}{1}{b}
\midcell{3}{1}{c}
\midcell{3}{0}{d}
\end{tikzpicture}\tabularnewline
\begin{tikzpicture} [scale=0.5]
\midcell{1}{1}{c}
\midcell{2}{1}{b}
\midcell{3}{1}{a}
\midcell{1}{0}{d}
\end{tikzpicture}\tabularnewline
\end{tabular}\lower-0.5cm\hbox{$\xymatrix@=4mm{
a \ar@{<->}[r] & b \ar@{<->}[r]  & c \ar@<2pt>@{.>}[d]^b \\ 
               &                 & d \ar@<2pt>[u] }$}%
\begin{tabular}{c}
\begin{tikzpicture} [scale=0.5]
\midcell{1}{0}{a}
\midcell{2}{0}{b}
\midcell{3}{0}{c}
\midcell{3}{1}{d}
\end{tikzpicture}\tabularnewline
\begin{tikzpicture} [scale=0.5]
\midcell{1}{0}{c}
\midcell{2}{0}{b}
\midcell{3}{0}{a}
\midcell{1}{1}{d}
\end{tikzpicture}\tabularnewline
\end{tabular}\lower-0.5cm\hbox{$\xymatrix@=4mm{
               &                 & d  \\
a \ar@{<->}[r] & b \ar@{<->}[r]  & c \ar[u] 
}$}%
\begin{tabular}{c}
\begin{tikzpicture} [scale=0.5]
\midcell{1}{1}{c}
\midcell{1}{2}{b}
\midcell{1}{3}{a}
\midcell{2}{1}{d}
\end{tikzpicture}\tabularnewline
\begin{tikzpicture} [scale=0.5]
\midcell{1}{1}{c}
\midcell{1}{2}{b}
\midcell{1}{3}{a}
\midcell{0}{1}{d}
\end{tikzpicture}\tabularnewline
\end{tabular}\lower-0.9cm\hbox{$\xymatrix@=4mm{
a \\
b \ar[u] \ar@<2pt>@{.>}[d]^a \\
c \ar@<2pt>[u] \ar@{<->}[r]  & d  \\ 
}$}%
\begin{tabular}{c}
\begin{tikzpicture} [scale=0.5]
\midcell{1}{1}{c}
\midcell{1}{2}{b}
\midcell{1}{3}{a}
\midcell{2}{3}{d}
\end{tikzpicture}\tabularnewline
\begin{tikzpicture} [scale=0.5]
\midcell{1}{1}{c}
\midcell{1}{2}{b}
\midcell{1}{3}{a}
\midcell{0}{3}{d}
\end{tikzpicture}\tabularnewline
\end{tabular}\lower-0.9cm\hbox{$\xymatrix@=4mm{
a \ar@{<->}[r] \ar@<2pt>@{.>}[d]^d  & d \\
b \ar@<2pt>[u]  \\
c \ar[u]   \\ 
}$}

}\hfill{}

\caption{(a) A $(2,6)$ priority strategy for the breaker for $P_{4,2}$ (b)
Dependency digraph of the cells in the orientations of $P_{4,2}$
with secondary arrows omitted. }
\end{figure}

\begin{prop}
The threshold sequence of $P_{4,2}$ is $\tau(P_{4,2})=(1,5,11,\infty)$.\end{prop}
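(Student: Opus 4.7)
The plan is to mimic the pattern already established for the other size-four polyominoes, handling each entry of the threshold sequence in turn. Since the $(3,11)$-winner and $(3,12)$-loser parts are already delivered by the preceding lemma, and the $(4,b)$-winner part is automatic from $|P_{4,2}|=4$, only the first two entries $b_1=1$ and $b_2=5$ need justification.

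For the $(1,1)$-winner claim I would simply observe that $P_{4,2}$ sits inside the L-pentomino shown in Figure~\ref{fig:winners}; by the monotonicity result that a subset inherits every maker win, it is a $(1,1)$-winner. For the $(1,2)$-loser claim I would exhibit a $2$-paving from Figure~\ref{fig:Pavings} whose domino classes cover every placement of $P_{4,2}$, then invoke the standard paving-strategy argument. A natural candidate is $\mathsf{T}_{2,1}$, since any placement of $P_{4,2}$ contains a vertical domino of three consecutive cells, forcing two of its cells into the same pair. I would check this for both orientations of $P_{4,2}$ and conclude that the breaker wins the $(1,2)$ game.

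For the $(2,5)$-winner claim I would present the proof sequence of Figure~\ref{fig:P_4_2 w (2,5)} and verify that each situation $s_i$ satisfies the formal requirement: no matter which five cells the breaker marks in $N_{s_i}$, the maker has two cells in $N_{s_i}$ left whose marking reduces to a strictly earlier situation. Since situation $s_1$ has six disjoint letter-pairs $A,\ldots,F$ while the breaker only has five marks per turn, one pair must survive, reaching $s_0$ and completing $P_{4,2}$.

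For the $(2,6)$-loser claim, which I expect to be the main obstacle, I would follow the priority-strategy template used already for $T_{4,1}$, $T_{4,2}$, $T_{4,3}$, and $P_{4,1}$. I would introduce the priority strategy of Figure~\ref{fig:26P42A} and then, for each of the eight rotation/reflection orientations of $P_{4,2}$, build the dependency digraph as in Figure~\ref{fig:26P42B}. The goal is to show that in every orientation the solid and conditional arrows force at least three cells of the goal animal to be marked in a single maker turn, which is impossible with $a=2$. The delicate part will be the orientations in which the breaker's second response cell carries the lower priority label $2$: there I will need to argue carefully that the conditional arrows (triggered whenever the higher-priority neighbors are already filled by earlier maker moves) do activate in the only possible marking orders, much as in Example~\ref{exa:secondary}. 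A case-by-case inspection of the orientations, parallel to the $T_{4,2}$ argument, should then close the proof.
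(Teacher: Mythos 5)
Your plan mirrors the paper's proof almost exactly: the subset-of-$L$ argument for $b_1=1$, the proof sequence of Figure~\ref{fig:P_4_2 w (2,5)} for the $(2,5)$-winner step, the priority strategy of Figure~\ref{fig:26P42A} (with the dependency digraphs of Figure~\ref{fig:26P42B}) for the $(2,6)$-loser step, and the preceding lemma for the $(3,\cdot)$ entries. The one place you diverge is the $(1,2)$-loser argument, where the paper just invokes monotonicity through the subpolyomino $P_{3,1}$, while you propose checking the paving $\mathsf{T}_{2,1}$ directly --- that works too, but the correct reason is that every orientation of the L-tetromino contains a vertically adjacent \emph{pair} of cells (not three vertically consecutive cells, which fails for horizontal orientations), so you should fix that wording.
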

\begin{proof}
Polyomino $P_{4,2}$ is a subset of the winner $L$ shown in Figure~\ref{fig:winners}
so maker wins the $(1,1)$ game. The breaker wins the $(1,2)$ game
since $P_{4,2}$ contains the $(1,2)$-loser $P_{3,1}$. 

The maker wins the $(2,5)$ game using the proof sequence of Figure~\ref{fig:P_4_2 w (2,5)}.
It is easy to see that the breaker wins the $(2,6)$ game following
the priority strategy determined by Figure~\ref{fig:26P42A}. 
\end{proof}
Note that a somewhat more complicated proof sequence shows that $P_{4,2}$
is a $(1\a2,2)$-winner which also implies that $P_{4,2}$ is a $(2,5)$-winner.

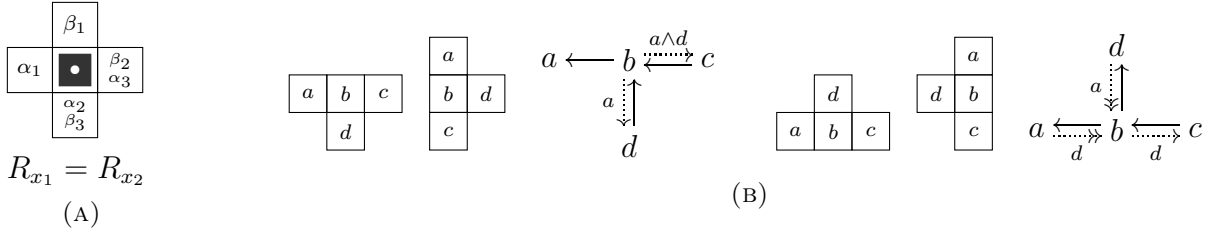
\begin{figure}
\hfill{}\subfloat[\label{fig:26P43A}]{%
\begin{tabular}{c}
\begin{tikzpicture} [scale=0.6]
\midcell{2}{3}{\beta_1}
\midcell{1}{2}{\alpha_1}
\midcell{2}{1}{\genfrac{}{}{0pt}{}{\alpha_2}{\beta_3}}
\midcell{3}{2}{\genfrac{}{}{0pt}{}{\beta_2}{\alpha_3}}
\maker{2}{2}{\bullet}
\end{tikzpicture}\tabularnewline
$R_{x_1}=R_{x_2}$
\tabularnewline
\end{tabular}

}\hfill{}\subfloat[\label{fig:24P43B}]{

\begin{tabular}{ccccccc}
\begin{tikzpicture} [scale=0.5]
\midcell{1}{1}{a}
\midcell{2}{1}{b}
\midcell{3}{1}{c}
\midcell{2}{0}{d}
\end{tikzpicture} & \begin{tikzpicture} [scale=0.5]
\midcell{1}{1}{c}
\midcell{1}{2}{b}
\midcell{1}{3}{a}
\midcell{2}{2}{d}
\end{tikzpicture} & \lower-1.1cm\hbox{$\xymatrix@=6mm{
a & b \ar[l] \ar@<-2pt>@{.>}[d]_{a} \ar@<2pt>@{.>}[r]^{a\land d} & c \ar@<2pt>[l]  \\ 
               & d \ar@<-2pt>[u]    &  }$} &  & \begin{tikzpicture} [scale=0.5]
\midcell{1}{0}{a}
\midcell{2}{0}{b}
\midcell{3}{0}{c}
\midcell{2}{1}{d}
\end{tikzpicture} & \begin{tikzpicture} [scale=0.5]
\midcell{1}{1}{c}
\midcell{1}{2}{b}
\midcell{1}{3}{a}
\midcell{0}{2}{d}
\end{tikzpicture} & \lower-1.3cm\hbox{$\xymatrix@=6mm{
                         & d \ar@{.>>}@<-2pt>[d]_a               &   \\
a \ar@{.>>}@<-2pt>[r]_d  & b \ar@<-2pt>[l] \ar@<-2pt>[u] \ar@{.>}@<-2pt>[r]_d  & c \ar@<-2pt>[l]  
}$}\tabularnewline
\end{tabular}

}\hfill{}

\caption{(a) A $(2,4)$ priority strategy for the breaker for $P_{4,3}$ (b)
Dependency digraph of the cells in the orientations of $P_{4,3}$. }
\end{figure}

\begin{prop}
The threshold sequence of $P_{4,3}$ is $\tau(P_{4,3})=(1,3,11,\infty)$.\end{prop}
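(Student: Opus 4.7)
The plan is to establish four nontrivial bounds: $P_{4,3}$ is a $(1,1)$-winner, a $(1,2)$-loser, a $(2,3)$-winner, and a $(2,4)$-loser. The $(3,11)$-winner and $(3,12)$-loser claims are already covered by the preceding lemma, and $\tau(P_{4,3})(i)=\infty$ for $i\ge 4$ is immediate from $|P_{4,3}|=4$.

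The $(1,1)$-winner and $(1,2)$-loser facts both follow from subset monotonicity. On the one hand $P_{4,3}$ is congruent to a subanimal of the known unbiased winner $Y$ of Figure~\ref{fig:winners}, so it inherits a $(1,1)$ winning strategy. On the other hand $P_{3,1}\subseteq P_{4,3}$ and we already know $\tau(P_{3,1})(1)=1$; equivalently, the double paving $\mathsf{T}_{2,2}$ that defeats every placement of $P_{3,1}$ also defeats every placement of $P_{4,3}$. For the $(2,3)$-winner claim, Corollary~\ref{cor:reduce} reduces matters to the $(1\a 2,1)$ game, and the $(1,1)$ proof sequence inherited from $Y$ already handles this case, since the maker can afford to ignore his bonus final mark.

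The remaining task, and the main content of the proof, is the $(2,4)$-loser claim. Here the breaker follows the priority strategy of Figure~\ref{fig:24P43A}, whose response set carries separate priority orderings $(\alpha_1,\alpha_2,\alpha_3)$ and $(\beta_1,\beta_2,\beta_3)$ governing the breaker's first and second responses within that set. For each distinct orientation of the T-tetromino on the board I would construct the dependency digraph as illustrated in Figure~\ref{fig:24P43B} and verify that the induced dependency relation forces at least three of the four cells of the goal to be marked in a single turn, which is impossible with $a=2$.

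The principal obstacle is the secondary-arrow phenomenon worked out in Example~\ref{exa:secondary}: for one of the orientations two cells of the goal share their $\alpha_1$-priority response cell, and which dependency arrow actually fires depends on the order in which the maker plays those two cells within a single turn. I would dispatch this by the three-case analysis implicit in Example~\ref{exa:secondary}---play $a$ before $d$, play $d$ before $a$, or play both in the same turn with the lexicographic tie-breaker---and check in each case that the resulting same-turn chain has length at least three. Once this verification succeeds for every orientation, no maker strategy can beat the priority strategy and the $(2,4)$-loser claim, and hence the whole proposition, is established.
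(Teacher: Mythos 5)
Your proposal is correct and mirrors the paper's proof almost exactly: subset of $Y$ for the $(1,1)$-winner, $P_{3,1}\subseteq P_{4,3}$ for the $(1,2)$-loser, Corollary~\ref{cor:reduce} for the $(2,3)$-winner, the preceding lemma for the $(3,b)$ threshold, and the priority strategy of Figure~\ref{fig:26P43A} with its dependency digraphs and the secondary-arrow analysis of Example~\ref{exa:secondary} for the $(2,4)$-loser. The only nit is a label slip (you cite \texttt{fig:24P43A}, the paper's label is \texttt{fig:26P43A}); substantively the argument is the same.
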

\begin{proof}
Polyomino $P_{4,3}$ is a subset of the winner $Y$ shown in Figure~\ref{fig:winners}
so the maker wins the $(1,1)$ game and therefore the $(2,3)$ game.
The breaker wins the $(1,2)$ game since $P_{4,3}$ contains the $(1,2)$-loser
$P_{3,1}$. 

We show that the breaker wins the $(2,4)$ game following the priority
strategy determined by Figure~\ref{fig:26P43A}. The breaker uses
priorities $(\alpha_{1},\alpha_{2},\alpha_{3})$ during her first
mark in a response set and priorities $(\beta_{1},\beta_{2},\beta_{3})$
during her second mark in the response set. 

Figure~\ref{fig:24P43B} shows the dependency digraph of the cells
of the goal animal in all orientations. In the first two orientations,
we have $b\sim c\sim d$ so cells $b$, $c$ and $d$ must be marked
in the same turn by the maker to achieve the goal animal. In the last
two orientations, either cells $a$, $b$ and $c$ or cells $d$,
$b$ and $c$ must be marked in the same turn as explained in Example~\ref{exa:secondary}.
The maker is not able to do so since he only has two marks in a turn.
\end{proof}
\begin{figure}
\begin{tabular}{ccc}
$s_{0}$ & $s_{1}$ & $s_{2}$\tabularnewline
\begin{tikzpicture} [scale=0.5]
\maker{1}{1}{}
\maker{1}{2}{}
\maker{2}{1}{}
\maker{2}{2}{}
\end{tikzpicture} & \begin{tikzpicture} [scale=0.5]
\maker{1}{2}{}
\maker{2}{2}{}
\midcell{1}{3}{A}
\midcell{2}{3}{A}
\midcell{1}{1}{B}
\midcell{2}{1}{B}
\end{tikzpicture} & \lower .8mm\hbox{
\begin{tikzpicture} [scale=0.61]
\maker{2}{2}{}
\ccell{1}{1}{a}{}{c}{}
\ccell{1}{2}{A}{}{c}{d}
\ccell{1}{3}{a}{}{}{d}
\ccell{2}{1}{a}{b}{C}{}
\ccell{2}{3}{a}{b}{}{D}
\ccell{3}{1}{}{b}{c}{}
\ccell{3}{2}{}{B}{c}{d}
\ccell{3}{3}{}{b}{}{d}
\end{tikzpicture}
}\tabularnewline
\end{tabular}\lower-0.9cm\hbox{
$\xymatrix@=5mm{
s_2 \ar[d]_{a,b,c,d} \\
s_1 \ar[d]_{a,b}  \\
s_0 \\
}$
}

\caption{\label{fig:P_4_4 w (2,3)}A proof sequence of the maker strategy for
$(1\a2,1)$-achieving $P_{4,4}$. }
\end{figure}
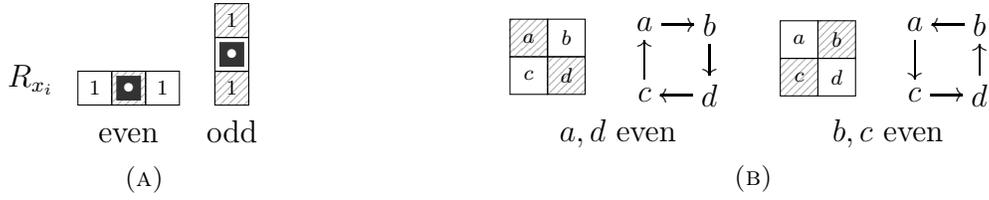
\begin{figure}
\hfill{}\subfloat[\label{fig:24P44A}]{$R_{x_i}$

\begin{tabular}{cc}
\begin{tikzpicture} [scale=0.45]
\midcell{1}{2}{1}
\midcell{3}{2}{1}
\shadeeven{2}{2}
\maker{2}{2}{\bullet}
\end{tikzpicture} & \begin{tikzpicture} [scale=0.45]
\shadeeven{2}{3}
\shadeeven{2}{1}
\midcell{2}{3}{1}
\midcell{2}{1}{1}
\maker{2}{2}{\bullet}
\end{tikzpicture}\tabularnewline
even & odd\tabularnewline
\end{tabular}

}\hfill{}\subfloat[\label{fig:24P44B}]{%
\begin{tabular}{cc}
\begin{tikzpicture} [scale=0.5]
\midcell{1}{1}{c}{0}{}{}
\shadeeven{1}{2}
\midcell{1}{2}{a}
\shadeeven{2}{1}
\midcell{2}{1}{d}
\midcell{2}{2}{b}
\end{tikzpicture}
$\quad$\lower-0.83cm\hbox{$\xymatrix@=4mm{
a \ar[r]  & b \ar[d] \\ 
c \ar[u]  & d \ar[l] }$} & \tabularnewline
$a,d$ even & \tabularnewline
\end{tabular}%
\begin{tabular}{c}
\begin{tikzpicture} [scale=0.5]
\shadeeven{1}{1}
\midcell{1}{1}{c}
\midcell{1}{2}{a}
\midcell{2}{1}{d}
\shadeeven{2}{2}
\midcell{2}{2}{b}
\end{tikzpicture}
$\quad$\lower-0.85cm\hbox{$\xymatrix@=4mm{
a \ar[d]  & b \ar[l] \\ 
c \ar[r]  & d \ar[u] }$}\tabularnewline
$b,c$ even\tabularnewline
\end{tabular}}\hfill{}

\caption{(a) A $(2,4)$ and $(3,6)$ priority strategy for the breaker for
$P_{4,4}$. The priorities depend on the parity of the current maker
mark. Even cells are shaded. (b) Dependency digraphs of the cells
in the placements of $P_{4,4}$. }
\end{figure}

\begin{prop}
The threshold sequence of $P_{4,4}$ is $\tau(P_{4,4})=(0,3,5,\infty)$. \end{prop}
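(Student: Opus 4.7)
The plan is to establish $\tau(P_{4,4})=(0,3,5,\infty)$ by confirming six separate facts that together pin down the sequence: $P_{4,4}$ is a $(1,1)$-loser, a $(2,4)$-loser, a $(3,6)$-loser, a $(2,3)$-winner, a $(3,5)$-winner, and trivially an $(n,b)$-winner for every $n\ge 4$. All remaining entries are then forced by Lemma~\ref{lem:morea}.

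For the loser side, the $(1,1)$-loss is handled by the paving strategy built on $\mathsf{T}_{1,1}$ of Figure~\ref{fig:Pavings}: a direct check shows that every placement of the $2\times 2$ square on the rectangular board contains one of the vertically adjacent paired pairs of that paving, so the breaker always has a cell to mark that destroys the maker's placement. The harder losses $(2,4)$ and $(3,6)$ I would obtain together from the single priority strategy displayed in Figure~\ref{fig:24P44A}. The novel feature is that the response set depends on the checkerboard parity of the current maker mark: for an even (shaded) mark the two horizontal neighbors are the priority-$1$ response cells, while for an odd mark the two vertical neighbors play this role. I would then verify, using the dependency digraphs of Figure~\ref{fig:24P44B} for the two possible orientations of $P_{4,4}$, that every cell of the square sends an unconditional arrow to its in-square neighbor opposite to its parity, producing the closed $4$-cycle $a\to b\to d\to c\to a$. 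Consequently the maker must mark all four cells of $P_{4,4}$ in one turn, which he cannot do with only $a\in\{2,3\}$ marks per turn.

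The winning side is shorter. The $(n,b)$-wins for $n\ge 4$ are trivial because $|P_{4,4}|=4$, so the maker completes the goal on his first move. Both genuine winning cases flow from one proof sequence: Figure~\ref{fig:P_4_4 w (2,3)} certifies that $P_{4,4}$ is a bounded $(1\a 2,1)$-winner. Corollary~\ref{cor:reduce} applied with $a=2,\, b=3$ (so $\lfloor b/a\rfloor=1$) upgrades this at once to a $(2,3)$-winning strategy. For $(3,5)$ I would first observe that any $(1\a 2,1)$-winning strategy is automatically a $(1\a 3,1)$-winning strategy, since the maker may pad the final turn with one arbitrary unmarked cell without affecting the already completed goal. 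A second application of Corollary~\ref{cor:reduce}, now with $a=3,\, b=5$ and $\lfloor b/a\rfloor=1$, converts this into a $(3,5)$-winning strategy. The trivial $(1,0)$-win completes the list.

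The main obstacle is the dependency digraph verification underlying the priority strategy. One has to check, placement by placement, that the breaker's responses dictated by the parity-dependent priorities really propagate into the full $4$-cycle shown in Figure~\ref{fig:24P44B}. Because the $2\times 2$ square contains exactly two cells of each checkerboard parity, and the parity-dependent response sends each cell to its neighbors of the opposite orientation, every cell of $P_{4,4}$ points uniquely into the square, closing the cycle regardless of which orientation is considered. Once this cycle is in place, the remainder is a direct appeal to the frameworks of Sections~\ref{sec:TheACBgame} and \ref{sec:The-priority-strategy}.
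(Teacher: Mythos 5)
Your proposal matches the paper's proof essentially step for step: the $(1,1)$-loss from the $\mathsf{T}_{1,1}$ paving, the $(2,3)$- and $(3,5)$-wins via the $(1\a2,1)$ proof sequence of Figure~\ref{fig:P_4_4 w (2,3)} together with Corollary~\ref{cor:reduce} and the observation that a $(1\a2,1)$-win is automatically a $(1\a3,1)$-win, and the $(2,4)$- and $(3,6)$-losses from the parity-dependent priority strategy of Figure~\ref{fig:24P44A} whose dependency digraph is a directed $4$-cycle forcing all four cells into a single turn. The only cosmetic remark is that your phrase ``in-square neighbor opposite to its parity'' is slightly loose (both in-square neighbors have opposite parity; what distinguishes the arrow's target is the horizontal-versus-vertical direction dictated by the current cell's own parity), but this does not affect the correctness of the argument.
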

\begin{proof}
Polyomino $P_{4,4}$ is not a subset of any of the winners shown in
Figure~\ref{fig:winners} so it is a $(1,1)$-loser. Actually, the
breaker wins using the paving strategy based on paving $\mathsf{T}_{1,1}$
shown in Figure~\ref{fig:Pavings}.

The maker wins the $(1\a2,1)$ game using the proof sequence in Figure~\ref{fig:P_4_4 w (2,3)}.
So by Corollary~\ref{cor:reduce} the maker wins the $(2,3)$ game
. The maker also wins the $(1\a3,1)$ and therefore the $(3,5)$ game
since the $(1\a3,1)$ game is easier for the maker than the $(1\a2,1)$
game. 

We show that the breaker wins the $(2,4)$ game and the $(3,6)$ game
following the priority strategy determined by Figure~\ref{fig:24P44A}.
We say that a cell on the board with coordinates $(x,y)$ is even
if $x+y$ is even. Otherwise the cell is called odd. The parities
of the cells therefore form a checkerboard pattern. The priorities
in the breaker strategy depend on the parity of the current cell marked
by the maker. Figure~\ref{fig:24P44B} shows the dependency digraph
of the cells of the goal animal. It is clear from the digraph that
all four cells have to be marked in a single turn. The maker is not
able to do so since he has fewer than four marks in a turn in both
the $(2,4)$ and $(3,6)$ games.\end{proof}
\begin{prop}
The threshold sequence of $P_{4,5}$ is $\tau(P_{4,5})=(1,c,11,\infty)$
where $c\ge3$.\end{prop}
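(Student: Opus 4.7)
The plan is to assemble all four bounds on $\tau(P_{4,5})$ by invoking results already established in the paper, rather than producing a new proof sequence or priority strategy from scratch. Note that $P_{4,5}$ (the S-tetromino) is a subset of the known $(1,1)$-winner $Z$ from Figure~\ref{fig:winners}, and it contains the bent tromino $P_{3,2}$ as a subset.

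First I would establish the first coordinate. Because $P_{4,5}\subseteq Z$, the maker wins the unbiased game on $P_{4,5}$, so $\tau(P_{4,5})(1)\ge 1$. For the matching upper bound, $P_{3,2}\subseteq P_{4,5}$ together with the fact (shown earlier in the polyomino section) that $\tau(P_{3,2})(1)=1$, i.e.\ $P_{3,2}$ is a $(1,2)$-loser, forces $P_{4,5}$ to be a $(1,2)$-loser as well. Concretely, the same 2-paving strategy that defeats the maker for $P_{3,2}$ continues to defeat him for the larger animal $P_{4,5}$, since any placement of $P_{4,5}$ contains a placement of $P_{3,2}$. Thus $\tau(P_{4,5})(1)=1$.

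Next I would handle the third coordinate by citing the lemma just proved: $P_{4,5}$ is a $(3,11)$-winner and a $(3,12)$-loser, so $\tau(P_{4,5})(3)=11$. The fourth and later coordinates are $\infty$ by the general structure of threshold sequences (once an animal is a $(k,\infty)$-winner, Lemma~\ref{lem:morea} propagates this upward); in particular, since $|P_{4,5}|=4$, the maker trivially wins the $(4,b)$ game for every $b$.

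Finally, for the second coordinate I would show $c\ge 3$ by applying Theorem~\ref{thm:main} with $s=2$ and $a_1=a_2=b_1=b_2=1$: since $P_{4,5}$ is a bounded $(1,1)$-winner (the standard proof sequence inherited from $Z$ uses finitely many moves on a finite subboard), Theorem~\ref{thm:main} yields that it is a $(2,3)$-winner, hence $\tau(P_{4,5})(2)\ge 3$. The only ingredient that is not already an off-the-shelf invocation is the verification that the $(1,1)$-winning strategy we are using is bounded, but this is immediate from the proof sequence description. The main obstacle not addressed here is pinning down the exact value of $c$: determining whether $P_{4,5}$ is a $(2,4)$-winner or loser would require either a new proof sequence or a new priority strategy, and this is precisely why the proposition is stated only as a lower bound $c\ge 3$ rather than an equality.
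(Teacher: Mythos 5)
Your proposal is correct and follows essentially the same route as the paper's proof: cite containment in the winner $Z$ for the $(1,1)$ lower bound, cite containment of $P_{3,2}$ for the $(1,2)$ upper bound, invoke the earlier lemma for the $(3,11)/(3,12)$ thresholds, and use the decomposition theorem to lift $(1,1)$ to $(2,3)$. The paper simply writes this via Corollary~\ref{cor:reduce} (itself the special case of Theorem~\ref{thm:main} you spell out) and leaves the third and fourth coordinates implicit, so your version is just a slightly more explicit rendering of the same argument.
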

\begin{proof}
Polyomino $P_{4,5}$ is a subset of the winner $Z$ shown in Figure~\ref{fig:winners}
so the maker wins the $(1,1)$ game and therefore the $(2,3)$ game.
The breaker wins the $(1,2)$ game since $P_{4,5}$ contains the $(1,2)$-loser
$P_{3,2}$.
\end{proof}
It remains to show that $P_{4,5}$ is a $(2,4)$-loser. For this we
need a more complicated priority strategy for the breaker.

\section{\label{sec:historyDep}The history dependent priority strategy}

\begin{figure}
\hfill{}\subfloat[\label{fig:historyStrat}]{%
\begin{tabular}{ccccc}
even & \begin{tikzpicture} [scale=0.45]
\midcell{2}{3}{1}
\maker{1}{2}{}
\midcell{2}{1}{1}
\maker{3}{2}{}
\shadeeven{2}{2}
\maker{2}{2}{\bullet}
\end{tikzpicture} & \begin{tikzpicture} [scale=0.45]
\maker{1}{2}{}
\shadeeven{2}{2}
\shadeeven{3}{3}
\shadeeven{3}{1}
\maker{2}{2}{\bullet}
\midcell{3}{3}{1}
\midcell{3}{1}{1}
\midcell{3}{2}{2}
\end{tikzpicture} & \begin{tikzpicture} [scale=0.45]
\maker{3}{2}{}
\shadeeven{2}{2}
\shadeeven{1}{3}
\shadeeven{1}{1}
\midcell{1}{3}{1}
\midcell{1}{1}{1}
\midcell{1}{2}{2}
\maker{2}{2}{\bullet}
\end{tikzpicture} & \raise.45cm\hbox{%
\begin{tikzpicture} [scale=0.45]
\midcell{1}{2}{1}
\midcell{3}{2}{1}
\shadeeven{2}{2}
\maker{2}{2}{\bullet}
\end{tikzpicture}
}\tabularnewline
odd & \begin{tikzpicture} [scale=0.45]
\shadeeven{2}{3}
\shadeeven{1}{2}
\shadeeven{2}{1}
\shadeeven{3}{2}
\maker{2}{3}{}
\midcell{1}{2}{1}
\maker{2}{1}{}
\midcell{3}{2}{1}
\maker{2}{2}{\bullet}
\end{tikzpicture} & \begin{tikzpicture} [scale=0.45]
\shadeeven{2}{1}
\shadeeven{2}{3}
\midcell{1}{3}{1}
\maker{2}{1}{}
\midcell{3}{3}{1}
\midcell{2}{3}{2}
\maker{2}{2}{\bullet}
\end{tikzpicture} & \begin{tikzpicture} [scale=0.45]
\shadeeven{2}{3}
\shadeeven{2}{1}
\midcell{1}{1}{1}
\maker{2}{3}{}
\midcell{3}{1}{1}
\midcell{2}{1}{2}
\maker{2}{2}{\bullet}
\end{tikzpicture} & \begin{tikzpicture} [scale=0.45]
\shadeeven{2}{3}
\shadeeven{2}{1}
\midcell{2}{3}{1}
\midcell{2}{1}{1}
\maker{2}{2}{\bullet}
\end{tikzpicture}\tabularnewline
\end{tabular}

}\hfill{}\subfloat[\label{fig:placements}]{%
\begin{tabular}{c}
\begin{tikzpicture} [scale=0.45]
\shadeeven{1}{1}
\shadeeven{2}{2}
\shadeeven{2}{4}
\shadeeven{1}{3}
\midcell{1}{1}{a}
\midcell{1}{2}{b}
\midcell{2}{2}{c}
\midcell{2}{3}{d}
\midcell{0}{1}{h_1}
\midcell{2}{1}{h_3}
\midcell{3}{2}{h_5}
\midcell{2}{4}{h_4}
\midcell{1}{3}{h_2}
\end{tikzpicture}\tabularnewline
\end{tabular}

}\hfill{}

\caption{(a) A $(2,4)$ history dependent priority breaker strategy for $P_{4,5}$.
The priorities depend on the parity of the current maker mark. Even
cells are shaded. (b) The goal cells $\{a,b,c,d\}$ and the considered
history cells $\{h_{1},\ldots,h_{5}\}$.}
\end{figure}
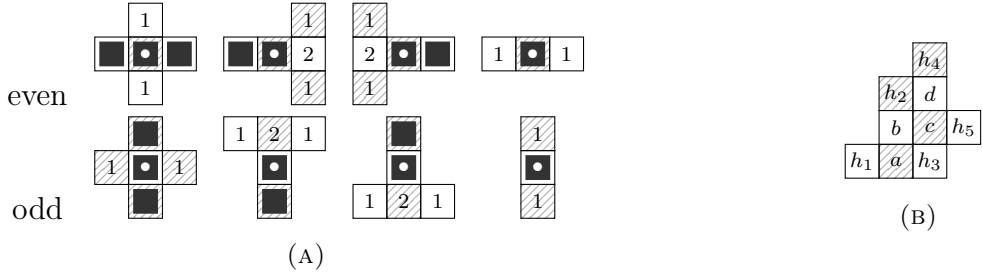

We introduce a priority strategy where the priorities of the cells
depend on the state of certain \emph{history cells}. If some of the
history cells are already marked by the maker, then the breaker uses
different priorities. The priorities do not change if some the history
cells are marked by the breaker. Figure~\ref{fig:historyStrat} shows
a history dependent priority strategy. The priorities depend on the
parity of the current maker mark. The first row shows the priorities
if this parity is even while the second row shows the priorities if
the parity is odd. Both of these rows contain four rules. The breaker
uses the first available rule for which the history cells have the
correct state. 

For example if the current maker mark is even, then the breaker uses
the rules in the first row. The first rule requires that the history
cells located on left and on the right of the current cell are both
marked by the maker. If the condition is satisfied, then the breaker
uses this first rule. If any of the history cells are unmarked or
marked by the breaker, then the breaker jumps to the next rule. This
second rule only requires that the history cell on the left of the
current mark is marked by the maker. It is clear that the requirement
for one of the rules is always satisfied. Note that the conditions
for the second and the third rules are never satisfied together. Swapping
these two rules in either row has no effect on the strategy. 

\begin{algorithm}
\begin{tabular}{ll}
\begin{tabular}{l}
1.\tabularnewline
\end{tabular} & \textbf{for each $G$} placement of the goal animal\tabularnewline
\begin{tabular}{l}
2.\tabularnewline
3.\tabularnewline
4.\tabularnewline
5.\tabularnewline
6.\tabularnewline
7.\tabularnewline
8.\tabularnewline
9.\tabularnewline
10.\tabularnewline
11.\tabularnewline
12.\tabularnewline
13.\tabularnewline
\end{tabular} & $\quad$%
\begin{tabular}{|l}
initialize the set $H$ of history cells \tabularnewline
$E:=G\cup H$\tabularnewline
push $(E,M:=\{\},B:=\{\})$ to $Positions$\tabularnewline
\textbf{while} $Positions$ is not empty\tabularnewline
$\quad$%
\begin{tabular}{|ll}
pop $(E,M,B)$ from $Positions$ & \tabularnewline
\textbf{if} $|G\cap E|\le2$  & \tabularnewline
$\quad$%
\begin{tabular}{|c}
breaker strategy fails, \textbf{stop}\tabularnewline
\end{tabular} & \tabularnewline
\textbf{for each} $F\subseteq E$ such that $|F|\le2$  & \tabularnewline
$\quad$%
\begin{tabular}{|l}
$\tilde{E}:=E\setminus F$\tabularnewline
$\tilde{M}:=M\cup F$\tabularnewline
$nPositions:=$AddBreaker$(G,\tilde{E},\tilde{M},B,F)$\tabularnewline
push $nPositions$ to $Positions$ \tabularnewline
\end{tabular} & \tabularnewline
\end{tabular}\tabularnewline
\end{tabular}\tabularnewline
\begin{tabular}{l}
14.\tabularnewline
\end{tabular} & strategy works\tabularnewline
\end{tabular}

\vspace{3mm}

\dotfill
\begin{enumerate}
\item We need to verify that none of the placements of the goal animal can
be marked by the maker in any order.
\item We collect the possible history cells in $H$. For an even goal cell
we add the cell on the left and on the right of the goal cell. For
an odd goal cell we add the cell above and below the goal cell.
\item The set $E$ of empty cells contains the goal and the history cells.
Although the maker only needs to mark the goal cells, marking the
history cells as well may affect his success. So we test every permutation
of the empty cells.
\item Variable $Positions$ is a stack that contains the set of game positions
that are promising for the maker. A position is determined by the
set $E$ of empty cells, the set $M$ of cells marked by the maker
and the set $B$ of cells marked by the breaker. At the beginning
we only have one position in which every cell is empty.
\item The analysis continues while there are any promising positions left
to consider.
\item We take one of the promising positions for further consideration.
\item If the position has fewer than 3 unmarked goal cells, then the maker
can win since he is allowed to mark 2 cells. 
\item This means the breaker strategy might fail, so we stop. Note that
the breaker strategy might actually work but require a more sophisticated
analysis.
\item We consider each possible one and two element subset $F$ of the set
of remaining empty cells $E$. Set $F$ contains the cells that the
maker is about to mark. We consider 1-element subsets since the maker
may use one of his marks somewhere else on the playing board.
\item We remove the current maker marks form the set of empty cells.
\item We add the current maker marks to the set of maker marks.
\item We call the function in Algorithm~2 to add the defensive breaker
marks corresponding to the current maker marks. The function returns
a possibly empty set of new positions.
\item We add the new positions to the stack of positions.
\item We run out of positions promising for the maker. This means the breaker
strategy worked.
\end{enumerate}
\caption{Analyze strategy}
\end{algorithm}

\begin{algorithm}
\begin{tabular}{ll}
\begin{tabular}{c}
1.\tabularnewline
\end{tabular} & push $(E,M,B)$ to $Positions$\tabularnewline
\begin{tabular}{c}
2.\tabularnewline
\end{tabular} & \textbf{for each} $f\in F$\tabularnewline
\begin{tabular}{l}
3.\tabularnewline
4.\tabularnewline
5.\tabularnewline
6.\tabularnewline
7.\tabularnewline
8.\tabularnewline
9.\tabularnewline
10.\tabularnewline
11.\tabularnewline
12.\tabularnewline
13.\tabularnewline
14.\tabularnewline
15.\tabularnewline
16.\tabularnewline
\end{tabular} & \textbf{$\quad$}%
\begin{tabular}{|l}
\textbf{if} $Positions$ is not empty\tabularnewline
$\quad$%
\begin{tabular}{|l}
\textbf{if} $f\in G$ \tabularnewline
$\quad$%
\begin{tabular}{|l}
pop $(E,M,B)$ from $Positions$ \tabularnewline
find rule $R$ matching cell $f$ in position $(E,M,B)$\tabularnewline
\textbf{if} $R.breaker\cap(G\setminus M)=\emptyset$ \tabularnewline
$\quad$%
\begin{tabular}{|l}
$B:=B\cup R.breaker$ \tabularnewline
$E:=E\setminus R.breaker$\tabularnewline
push $(E,M,B)$ to $Positions$\tabularnewline
\end{tabular}\tabularnewline
\end{tabular}\tabularnewline
\textbf{else}\tabularnewline
$\quad$%
\begin{tabular}{|l}
pop $(E,M,B)$ from $Positions$ \tabularnewline
\textbf{for each }rule $R$ matching cell $f$\tabularnewline
$\quad$%
\begin{tabular}{|l}
\textbf{if} $R.history\cap E=\emptyset$ and $R.breaker\cap(G\setminus M)=\emptyset$\tabularnewline
$\quad$%
\begin{tabular}{|l}
push $(E,M,B)$ to $Positions$\tabularnewline
\textbf{exit} loop\tabularnewline
\end{tabular}\tabularnewline
\end{tabular}\tabularnewline
\end{tabular}\tabularnewline
\end{tabular}\tabularnewline
\end{tabular}\tabularnewline
\begin{tabular}{c}
17.\tabularnewline
\end{tabular} & \textbf{return} $Positions$\tabularnewline
\end{tabular}

\vspace{3mm}

\dotfill
\begin{enumerate}
\item We fill the local variable $Positions$ with the unfinished position
missing the defensive breaker marks.
\item We add defensive moves to every current maker mark.
\item The position may be ruined for the maker after the first set of defensive
moves. In this case we do not need to try to add more defensive moves.
\item The defensive moves are handled differently for goal cells and history
cells that are not goal cells. First we handle the goal cells.
\item We consider the unfinished position $(E,M,B)$ and remove it from
$Positions$.
\item We find the rule that matches the current cell $f$ and the position.
The defensive breaker moves are uniquely determined by this rule since
the current cell is a goal cell and we consider every possible marking
order of the relevant history cells.
\item If any of the breaker marks determined by the matching rule are amongst
the unmarked goal cells, then the position is no longer promising
for the maker. In this case $Positions$ is left empty.
\item We update the set of breaker marks with the current defensive marks.
\item We remove the current defensive marks from the set of empty cells.
These cells are already marked by the breaker so the maker is not
able to mark them in a later turn.
\item We store the position with the new defensive moves in $Positions$.
\item Now we handle the case when the current maker mark is a history cell.
\item We consider the unfinished position $(E,M,B)$ and remove it from
$Positions$.
\item Since we do not consider the history cells for the current maker mark,
the defensive breaker marks are not uniquely determined. Hence we
need to consider every possible rule that does not contradict the
position.
\item A rule can contradict the position in two ways. A history cell of
the rule that is required to be marked by the maker cannot be in $E$
because the cells of $E$ are scheduled to be marked by the maker
at a later time. A response breaker move cannot be an unmarked goal
cell since that ruins the position for the maker.
\item We store the position in $Positions$. We do not add any defensive
breaker marks.
\item No more rules need to be considered since we already found a matching
one.
\item We return the set containing the position updated with breaker marks
or an empty set if the maker marks ruined the position for the maker. 
\end{enumerate}
\caption{AddBreaker$(G,E,M,B,F)$}
\end{algorithm}

\begin{figure}
\begin{tabular}{cccccccc}
\begin{tikzpicture} [scale=0.5]
\shadeeven{1}{1}
\shadeeven{2}{2}
\midcell{1}{1}{a}
\midcell{1}{2}{b}
\midcell{2}{2}{c}
\midcell{2}{3}{d}
\end{tikzpicture} & \begin{tikzpicture} [scale=0.5]
\shadeeven{1}{1}
\shadeeven{0}{2}
\midcell{1}{1}{a}
\midcell{1}{2}{b}
\midcell{0}{2}{c}
\midcell{0}{3}{d}
\end{tikzpicture} & \begin{tikzpicture} [scale=0.5]
\shadeeven{1}{2}
\shadeeven{2}{3}
\midcell{1}{1}{d}
\midcell{1}{2}{c}
\midcell{2}{2}{b}
\midcell{2}{3}{a}
\end{tikzpicture} & \begin{tikzpicture} [scale=0.5]
\shadeeven{1}{2}
\shadeeven{0}{3}
\midcell{1}{1}{d}
\midcell{1}{2}{c}
\midcell{0}{2}{b}
\midcell{0}{3}{a}
\end{tikzpicture} & \begin{tikzpicture} [scale=0.5]
\shadeeven{3}{1}
\shadeeven{2}{2}
\midcell{3}{1}{d}
\midcell{2}{1}{c}
\midcell{2}{2}{b}
\midcell{1}{2}{a}
\end{tikzpicture} & \begin{tikzpicture} [scale=0.5]
\shadeeven{3}{2}
\shadeeven{2}{1}
\midcell{3}{2}{d}
\midcell{2}{2}{c}
\midcell{2}{1}{b}
\midcell{1}{1}{a}
\end{tikzpicture} & \begin{tikzpicture} [scale=0.5]
\shadeeven{1}{2}
\shadeeven{2}{1}
\midcell{3}{1}{a}
\midcell{2}{1}{b}
\midcell{2}{2}{c}
\midcell{1}{2}{d}
\end{tikzpicture} & \begin{tikzpicture} [scale=0.5]
\shadeeven{1}{1}
\shadeeven{2}{2}
\midcell{3}{2}{a}
\midcell{2}{2}{b}
\midcell{2}{1}{c}
\midcell{1}{1}{d}
\end{tikzpicture}\tabularnewline
\end{tabular}

\caption{\label{fig:eightPlacements}The eight different placements of $P_{4,5}$
on a checker board.}
\end{figure}
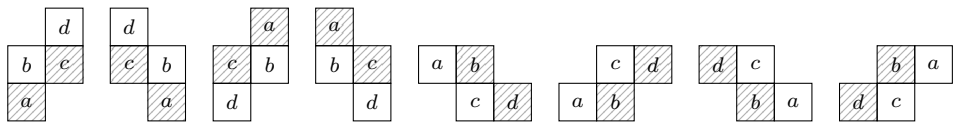

\begin{figure}
\begin{tabular}{cclccl}
\begin{tabular}{l}
1. $h_{1},h_{4},h_{5}$\tabularnewline
2. $h_{1},h_{4}h_{5}$\tabularnewline
3. $h_{1}h_{4},h_{5}$\tabularnewline
4. $h_{1},h_{5},h_{4}$\tabularnewline
5. $h_{1}h_{5},h_{4}$\tabularnewline
6. $h_{4},h_{1},h_{5}$\tabularnewline
\end{tabular} & %
\begin{tabular}{l}
7. $h_{4},h_{1}h_{5}$\tabularnewline
8. $h_{4},h_{5},h_{1}$\tabularnewline
9. $h_{4}h_{5},h_{1}$\tabularnewline
10. $h_{5},h_{1},h_{4}$\tabularnewline
11. $h_{5},h_{1}h_{4}$\tabularnewline
12. $h_{5},h_{4},h_{1}$\tabularnewline
\end{tabular} & \raise-0.9cm\hbox{
\begin{tikzpicture} [scale=0.5]
\shadeeven{1}{1}
\shadeeven{2}{2}
\shadeeven{2}{4}
\shadeeven{1}{3}
\midcell{1}{1}{a}
\midcell{1}{2}{b}
\midcell{2}{2}{c}
\midcell{2}{3}{d}
\maker{0}{1}{h_1}
\midcell{2}{1}{h_3}
\maker{3}{2}{h_5}
\maker{2}{4}{h_4}
\midcell{1}{3}{h_2}
\end{tikzpicture}
} & %
\begin{tabular}{l}
13. $a,h_{4},h_{5}$\tabularnewline
14. $a,h_{4}h_{5}$\tabularnewline
15. $ah_{4},h_{5}$\tabularnewline
16. $a,h_{5},h_{4}$\tabularnewline
17. $ah_{5},h_{4}$\tabularnewline
18. $h_{4},a,h_{5}$\tabularnewline
\end{tabular} & %
\begin{tabular}{l}
19. $h_{4},ah_{5}$\tabularnewline
20. $h_{4},h_{5},a$\tabularnewline
21. $h_{4}h_{5},a$\tabularnewline
22. $h_{5},a,h_{4}$\tabularnewline
23. $h_{5},ah_{4}$\tabularnewline
24. $h_{5},h_{4},a$\tabularnewline
\end{tabular} & \raise-0.9cm\hbox{
\begin{tikzpicture} [scale=0.5]
\shadeeven{1}{1}
\shadeeven{2}{2}
\shadeeven{2}{4}
\shadeeven{1}{3}
\maker{1}{1}{a}
\midcell{1}{2}{b}
\midcell{2}{2}{c}
\midcell{2}{3}{d}
\breaker{0}{1}{h_1}
\breaker{2}{1}{h_3}
\maker{3}{2}{h_5}
\maker{2}{4}{h_4}
\midcell{1}{3}{h_2}
\end{tikzpicture}
}\tabularnewline
\end{tabular}

\caption{\label{fig:terminal}Terminal positions.}
\end{figure}
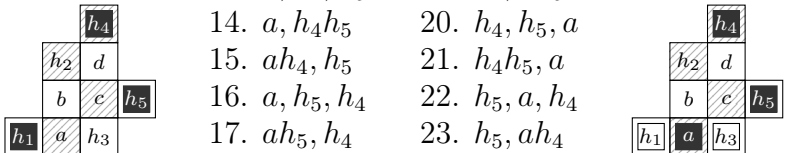

\begin{prop}
Polyomino $P_{4,5}$ is a $(2,4)$-loser and so $\tau(P_{4,5})=(1,3,11,\infty)$.\end{prop}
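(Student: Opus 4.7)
The plan is to establish the $(2,4)$-loser claim by exhibiting the history dependent priority strategy of Figure~\ref{fig:historyStrat} and certifying its correctness with the verification algorithm developed in this section. Combined with the preceding proposition, which already gave $\tau(P_{4,5}) = (1, c, 11, \infty)$ with $c \geq 3$, the $(2,4)$-loser statement forces $c = 3$ and delivers the advertised threshold sequence.

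The verification reduces to finitely many cases as follows. Because the strategy's priorities depend only on the parity of the current maker mark (even/odd in the checkerboard sense), the relative positions of the four goal cells, and the marking status of certain neighboring history cells, it suffices to consider the eight placements of $P_{4,5}$ displayed in Figure~\ref{fig:eightPlacements}. For each placement I fix the nine relevant cells, namely the goal set $G = \{a,b,c,d\}$ together with the surrounding history cells $\{h_1,\ldots,h_5\}$ shown in Figure~\ref{fig:placements}. I then invoke Algorithm~1, which explores a stack of partial positions $(E,M,B)$ by branching over every one- or two-element subset $F \subseteq E$ that the maker might mark in a turn, and calls Algorithm~2 to append the breaker's forced responses. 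When the maker's move lies in $G$ the matching rule of Figure~\ref{fig:historyStrat} is uniquely determined by the recorded state of the history cells, while for a history-cell move Algorithm~2 must try every rule whose history pattern is consistent with $E$. The algorithm succeeds precisely when the stack empties without encountering a configuration with $|G \cap E| \le 2$.

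The main obstacle is the combinatorial explosion: each placement generates a sizable game tree, and the subtlest branches are those in which the maker interleaves goal-cell and history-cell moves in orders that toggle which of the four rules applies. The strategy of Figure~\ref{fig:historyStrat} is designed so that, in every branch, a priority-$1$ breaker response eventually lands inside $G$ within a few moves, driving $|G \cap E|$ down fast enough that the algorithm closes the branch before the maker can threaten a win. Representative terminal positions encountered during the enumeration are recorded in Figure~\ref{fig:terminal}. Since the algorithm's output is a deterministic finite certificate, running it to completion on all eight placements of Figure~\ref{fig:eightPlacements} constitutes a rigorous proof; the case analysis is tedious but entirely mechanical.
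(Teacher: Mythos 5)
Your proposal matches the paper's proof essentially verbatim: exhibit the history-dependent priority strategy of Figure~\ref{fig:historyStrat}, certify it by running Algorithms~1 and 2 on the relevant goal and history cells, and combine the resulting $(2,4)$-loser conclusion with the preceding proposition's $c\ge 3$ bound to pin down $c=3$. The only minor difference is that the paper further observes that the strategy is invariant under parity-preserving reflections and parity-changing $90$-degree rotations, so that the eight placements of Figure~\ref{fig:eightPlacements} collapse to the single representative of Figure~\ref{fig:placements}; you instead run the algorithm on all eight, which is redundant but equally valid.
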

\begin{proof}
We used a computer program that implements Algorithms~1 and 2 to
verify that the breaker wins using the strategy of Figure~\ref{fig:historyStrat}.
The algorithm checks that the maker cannot mark all the cells of the
goal animal in any placement on the board no matter what order he
tries to mark the goal and history cells. First we find every partition
of the maker marks into classes such that every class contains one
or two cells. Then we consider each permutation of the classes. The
singleton classes represent turns where the maker used his second
mark somewhere far away on the board. The two element classes represent
turns where the maker marks two cells and both of these cells are
relevant to the position. 

During the analysis of a specific permutation of the maker marks,
we try to add the maker marks in the given order. We expect that this
process eventually fails and we are not going to be able to mark all
the goal cells. If the process succeeds, then we conclude that the
history dependent priority strategy for the breaker fails. 

We only add a maker mark on a history cell if at least one of the
corresponding defensive move sets is not in contradiction with the
position and the order of moves. We do not add any defensive moves
for a maker mark on a history cell. The missing breaker marks do not
hurt the chances of the maker. For a maker mark on a goal cell, the
defensive moves are determined since the mark order of the relevant
history cells is determined by the permutation. We add the defensive
moves for these maker marks if they do not contradict the position.
We call a position terminal if any subsequent set of maker marks cannot
be added because the corresponding breaker marks would ruin the position. 

The breaker strategy is invariant with respect to parity preserving
horizontal and vertical reflections and parity changing rotations
by 90 degrees. This implies that there is essentially one placement
of the goal animal shown in Figure~\ref{fig:placements} that we
need to consider. The labeling of the cells in Figure~\ref{fig:eightPlacements}
shows how the 8 placements are isomorphic.

Our program produces 2 different terminal positions during the search
shown in Figure~\ref{fig:terminal}. Every permutation fails after
2 or 3 turns because the breaker can spoil the position in the third
or fourth turn. So we only show the beginning turns of the permutations.
For example, in case 9 the maker tries to mark cells $h_{4}$ and
$h_{5}$ during the first turn and then cell $h_{1}$ and another
irrelevant cell during the second turn. This attempt results in a
terminal position.
\end{proof}
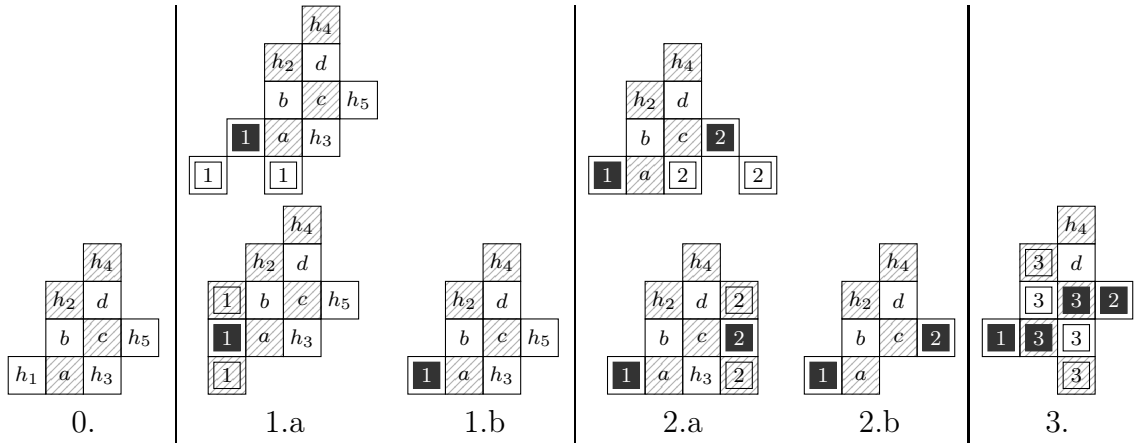
\begin{figure}
\begin{tabular}{c|cc|cc|c}
 & \begin{tikzpicture} [scale=0.5]
\shadeeven{1}{1}
\shadeeven{2}{2}
\shadeeven{2}{4}
\shadeeven{1}{3}
\midcell{1}{1}{a}
\midcell{1}{2}{b}
\midcell{2}{2}{c}
\midcell{2}{3}{d}
\maker{0}{1}{1}
\midcell{2}{1}{h_3}
\midcell{3}{2}{h_5}
\midcell{2}{4}{h_4}
\midcell{1}{3}{h_2}
\breaker{-1}{0}{1}
\breaker{1}{0}{1}
\end{tikzpicture} &  & \begin{tikzpicture} [scale=0.5]
\shadeeven{1}{1}
\shadeeven{2}{2}
\shadeeven{2}{4}
\shadeeven{1}{3}
\midcell{1}{1}{a}
\midcell{1}{2}{b}
\midcell{2}{2}{c}
\midcell{2}{3}{d}
\maker{0}{1}{1}
\breaker{2}{1}{2}
\breaker{4}{1}{2}
\maker{3}{2}{2}
\midcell{2}{4}{h_4}
\midcell{1}{3}{h_2}
\end{tikzpicture} &  & \tabularnewline
\begin{tikzpicture} [scale=0.5]
\shadeeven{1}{1}
\shadeeven{2}{2}
\shadeeven{2}{4}
\shadeeven{1}{3}
\midcell{1}{1}{a}
\midcell{1}{2}{b}
\midcell{2}{2}{c}
\midcell{2}{3}{d}
\midcell{0}{1}{h_1}
\midcell{2}{1}{h_3}
\midcell{3}{2}{h_5}
\midcell{2}{4}{h_4}
\midcell{1}{3}{h_2}
\end{tikzpicture} & \begin{tikzpicture} [scale=0.5]
\shadeeven{1}{1}
\shadeeven{2}{2}
\shadeeven{2}{4}
\shadeeven{1}{3}
\shadeeven{0}{2}
\shadeeven{0}{0}
\midcell{1}{1}{a}
\midcell{1}{2}{b}
\midcell{2}{2}{c}
\midcell{2}{3}{d}
\maker{0}{1}{1}
\midcell{2}{1}{h_3}
\midcell{3}{2}{h_5}
\midcell{2}{4}{h_4}
\midcell{1}{3}{h_2}
\breaker{0}{2}{1}
\breaker{0}{0}{1}
\end{tikzpicture} & \begin{tikzpicture} [scale=0.5]
\shadeeven{1}{1}
\shadeeven{2}{2}
\shadeeven{2}{4}
\shadeeven{1}{3}
\midcell{1}{1}{a}
\midcell{1}{2}{b}
\midcell{2}{2}{c}
\midcell{2}{3}{d}
\maker{0}{1}{1}
\midcell{2}{1}{h_3}
\midcell{3}{2}{h_5}
\midcell{2}{4}{h_4}
\midcell{1}{3}{h_2}
\end{tikzpicture} & \begin{tikzpicture} [scale=0.5]
\shadeeven{1}{1}
\shadeeven{2}{2}
\shadeeven{2}{4}
\shadeeven{1}{3}
\shadeeven{3}{3}
\shadeeven{3}{1}
\midcell{1}{1}{a}
\midcell{1}{2}{b}
\midcell{2}{2}{c}
\midcell{2}{3}{d}
\maker{0}{1}{1}
\midcell{2}{1}{h_3}
\maker{3}{2}{2}
\midcell{2}{4}{h_4}
\midcell{1}{3}{h_2}
\breaker{3}{3}{2}
\breaker{3}{1}{2}
\end{tikzpicture} & \begin{tikzpicture} [scale=0.5]
\shadeeven{1}{1}
\shadeeven{2}{2}
\shadeeven{2}{4}
\shadeeven{1}{3}
\midcell{1}{1}{a}
\midcell{1}{2}{b}
\midcell{2}{2}{c}
\midcell{2}{3}{d}
\maker{0}{1}{1}
\maker{3}{2}{2}
\midcell{2}{4}{h_4}
\midcell{1}{3}{h_2}
\end{tikzpicture} & \begin{tikzpicture} [scale=0.5]
\shadeeven{1}{1}
\shadeeven{2}{2}
\shadeeven{2}{4}
\shadeeven{1}{3}
\shadeeven{2}{0}
\maker{1}{1}{3}
\breaker{1}{2}{3}
\maker{2}{2}{3}
\midcell{2}{3}{d}
\maker{0}{1}{1}
\maker{3}{2}{2}
\midcell{2}{4}{h_4}
\breaker{1}{3}{3}
\breaker{2}{0}{3}
\breaker{2}{1}{3}
\end{tikzpicture}\tabularnewline
0. & 1.a & 1.b & 2.a & 2.b & 3.\tabularnewline
\end{tabular}

\caption{\label{fig:moveSequence}A failing attempt of the maker to use a move
sequence starting with the moves $h_{1},h_{5},ac,\ldots$ in a placement
of the goal animal.}
\end{figure}

\begin{example}
Figure~\ref{fig:moveSequence} shows why a move sequence starting
with $h_{5},h_{1},bd,\ldots$ fails to achieve $P_{4,5}$. Since $h_{1}$
is not a goal cell, we consider all possible breaker responses. Two
rules match the positions as shown in step 1.a. Since we have found
a matching rule, the analysis continues at step 1.b. We do not add
the defensive moves. Cell $h_{2}$ is again a goal cell with two rules
matching the position as shown in step 2.a. The analysis continues
at step 2.b without any of the defensive moves. Note that $h_{1}$
and $h_{5}$ are considered alone, which means the maker places the
corresponding second mark far away. The maker now tries to place cells
$a$ and $c$ in a single turn. The defensive breaker marks are now
determined because $a$ and $c$ are goal cells. The maker now marks
cell $b$ that is a priority 2 cell. This ruins the position for the
maker. 
\end{example}
More sophisticated versions of our algorithm may be needed for checking
more complicated history dependent priority strategies. One possibility
is to include the defensive breaker marks for maker marks on history
cells. These marks are not unique so we need to include all possibilities
which results in a much larger search tree. Another possibility is
to include $n$ levels of history cells. The level 1 history cells
are our usual history cells required for the goal cells. Level $k+1$
history cells are induced by the level $k$ history cells. During
the analysis, the level $n$ history cells would not produce breaker
marks, but we would include the breaker response cells for the other
history cells. Including more levels has a greater chance of success
but it is more computationally demanding.

\section{Further directions}

We list a few unanswered questions related to biased achievement games. 
\begin{enumerate}
\item Polyhex achievement games are studied in \cite{bode.harborth:hexagonal,Inagaki,sieben:hexagonal}.
What are the threshold sequences of small polyhexes?
\item Polycube achievement games are studied in \cite{higher,harary.weisbach:,sieben.deabay:polyomino}.
What are the threshold sequences of small polycubes? Adding a dimension
is a big advantage for the maker so the values in the two dimensional
threshold sequences are lower bounds for three dimensional values.
\item There are some results about achievement games played on $n$-dimensional
polycubes \cite{higher,sieben:snaky}. How does the threshold sequence
of a given polyomino change if the game is played on higher dimensional
rectangular boards?
\item Biased animal set $(1,2)$ games are studied in \cite{bode.harborth:triangle,sieben:fisher}.
In this version the maker wins if he marks any of the animals in a
given goal set of animals. What can we say about the threshold sequences
of goal sets of animals?
\item There are only finitely many $(1,1)$-winners in any animal achievement
game \cite{sieben:snaky}. Are there finitely many $(a,b)$-winners
for a fixed $a$ and $b$? The answer is most likely yes. If the answer
is in fact yes, what is the upper bound?
\item Are there two animals $A$ and $B$ with threshold sequences $(a_{1},a_{2},\ldots)$
and $(b_{1},b_{2},\ldots)$ respectively such that $a_{i}<b_{i}$
but $a_{j}>b_{j}$ for some $i$ and $j$?
\item What is the spectrum of the possible threshold sequences? For each
threshold sequence $(a_{1,}a_{2},\ldots)$ in Figures~\ref{fig:polyiamonds}
and \ref{fig:polyominoes}, $i$ divides $a_{i}+1$ for all $i$.
Is this true for all threshold sequences? One interpretation of this
property is that the $(a,\tilde{b})$ game is just as hard for the
breaker as the $(a,b)$ game if $\lfloor\tilde{b}/a\rfloor\le\lfloor b/a\rfloor$.
This seems reasonable considering Theorem~\ref{thm:main}.  
\item In a handicap $c$ game, the maker is allowed to mark $c$ cells in
the first turn and then play the usual $(1,1)$ game \cite{harary.harborth.ea:handicap}.
It is known \cite{AchievingSnaky,ito,prooftree} that Snaky is a $(1,1)$-winner
with handicap $1$. Is there a connection between a handicap $c$
game and a $(1\a c,1)$ game? Which game is easier for the maker?
\item Is there a way to use a dependency digraph to verify history dependent
priority strategies? It seems likely that the history cells should
be included in the digraph. The main difficulty is that the digraph
usually does not have any unconditional arrows, only conditional and
secondary arrows.
\item Although Snaky is conjectured to be a winner, it might actually be
a loser \cite{csernenszky,harborth.seemann:snaky*1,harborth.seemann:snaky}.
Checking priority strategies by computers is a lot easier than finding
wining strategies. So a systematic search for a history dependent
priority strategy for the breaker using a multilevel version of our
checking algorithm might not be hopeless. 
\item We could slightly improve Algorithm~2. Currently, we do not add any
defensive breaker moves for history cells. We only check (line~14)
that at least one configuration of the earlier maker marks results
in a set of breaker marks that does not ruin the position. If there
is only one such configuration, then the defensive moves could be
added together with the earlier maker marks that force this defensive
response. The addition of these marks would increase the chances of
a successful verification of the breaker strategy. It could also decrease
the branching factor of the backtracking search which would make the
search faster.
\item Polyominoes $L$, $Y$ and $Z$ are all $(1,1)$-winners but finding
a proof sequence is relatively easy for $Z$ and is quite challenging
for $L$. This intuition is strengthened by the lengths of the known
proof sequences for these animals. Can we use the threshold sequences
to firmly confirm that $Z$ is the easiest and $L$ is the hardest
five-cell animal to achieve?
\end{enumerate}
\bibliographystyle{amsplain}
\bibliography{game}

\end{document}